\newtheorem{theorem}{Theorem}[section]
\newtheorem{lemma}[theorem]{Lemma}
\newtheorem{proposition}[theorem]{Proposition}
\newtheorem{corollary}[theorem]{Corollary}
\theoremstyle{definition}
\newtheorem{definition}[theorem]{Definition}
\newtheorem{remark}[theorem]{Remark}
\newcommand{\ZZ}{\mathcal{Z}}
\newcommand{\OO}{\mathcal{O}}
\newcommand{\QQ}{\mathcal{Q}}
\newcommand{\N}{\mathbb{N}}
\newcommand{\Z}{\mathbb{Z}}
\newcommand{\Q}{\mathbb{Q}}
\newcommand{\R}{\mathbb{R}}
\newcommand{\C}{\mathbb{C}}
\newcommand{\mI}{\mathcal{I}}
\newcommand{\Sgr}{\mathcal{S}}
\newcommand{\spS}{\mathbb{S}}
\newcommand{\K}{\mathcal{K}}	
\newcommand{\Tor}{\mathrm{Tor}}
\newcommand{\Aut}{\mathrm{Aut}}
\newcommand{\Autgr}{\mathrm{Aut}_{\text{gr}}}
\newcommand{\Cliff}[1]{\C \ell_{#1}}
\newcommand{\id}[1]{\mathrm{id}_{#1}}
\newcommand{\Ggr}{\operatorname{G}^{\text{gr}}}
\newcommand{\G}{\operatorname{G}}
\newcommand{\KD}{K_0(D)^{\times}_{+}}
\newcommand{\hE}{\hat{E}}
\newcommand{\bE}{\bar{E}}
\DeclareMathOperator{\hocolim}{hocolim}
\definecolor{darkpastelred}{rgb}{0.76, 0.23, 0.13}
\definecolor{darkred}{rgb}{0.55, 0.0, 0.0}
\definecolor{darkmagenta}{rgb}{0.55, 0.0, 0.55}
\definecolor{coolblack}{rgb}{0.0, 0.18, 0.39}
\definecolor{ceruleanblue}{rgb}{0.16, 0.32, 0.75}
\begin{document}
\title[Draft]{Bundles of strongly self-absorbing $C^*$-algebras \\ with a Clifford grading}
\author{Marius Dadarlat}
\address{Department of Mathematics \\
Purdue University\\
West Lafayette, IN 47907, USA}
\author{Ulrich Pennig}
\address{School of Mathematics \\ 
Cardiff University \\
Cardiff \\
CF24 4AG\\
Wales\\
UK}
%\thanks{M.D. was partially supported by an NSF grant \#DMS--1700086 }

\maketitle
\begin{abstract}
We extend our previous results on generalized Dixmier-Douady theory to graded $C^*$-algebras,
as  means for explicit computations of the invariants arising for bundles of  ungraded $C^*$-algebras.
For a strongly self-absorbing $C^*$-algebra $D$ and  complex Clifford algebras $\mathbb{C}\ell_{n}$ we show that the classifying spaces of the groups of graded automorphisms $\mathrm{Aut}_{\text{gr}}(\mathbb{C}\ell_{n}\otimes \mathcal{K }\otimes D)$ admit compatible 
infinite loop space structures giving rise to a cohomology theory $\hat{E}^*_D(X)$. For $D$  stably finite and $X$  a finite CW-complex, we show that the tensor product operation defines a group structure on the isomorphism classes of  locally trivial bundles  of graded $C^*$-algebras  with fibers  $ \mathbb{C}\ell_{k}\otimes D \otimes \mathcal{K}$ and that this group is isomorphic to  $H^0(X,\mathbb{Z}/2)\oplus \hat{E}^1_{D}(X)$. 
Moreover, we establish  isomorphisms $\hat{E}^1_{D}(X)\cong   H^1(X;\mathbb{Z}/2) \times_{_{tw}} E^1_{D}(X)$ and
$\hat{E}^1_{D}(X)\cong E^1_{D\otimes \mathcal{O}_\infty}(X)$,
where $E^1_{D}(X)$ is the group that classifies the locally trivial bundles with fibers $D\otimes \mathcal{K}$.
In particular $E^1_{\mathcal{O}_\infty}(X)\cong H^1(X;\mathbb{Z}/2) \times_{_{tw}} E^1_{\mathcal{Z}}(X)$ where $\mathcal{Z}$ is the Jiang-Su algebra
and the multiplication on the last two factors is twisted  similarly to the Brauer theory for bundles with fibers the graded compact operators on a finite and respectively infinite dimensional Hilbert space.
\end{abstract}

\tableofcontents

\section{Introduction}
Continuous fields of $C^*$-algebras
 play the role of bundles of $C^*$-algebras, in the sense of topology,  as explained in \cite{BK}.
  These structures occur naturally in various generalizations of the Gelfand-Naimark theorem.  Indeed, Fell \cite{Fell} showed that any  separable $C^*$-algebra $A$ with Hausdorff primitive spectrum $X$
 has a canonical continuous field structure over $X$ with fibers the primitive quotients of $A$. Equally important, continuous field $C^*$-algebras are employed
as versatile tools in several areas, including index and representation theory, the Novikov and the Baum-Connes conjectures, strict deformation quantization, quantum groups and E-theory.
While in general the bundle structure that underlies a continuous field of $C^*$-algebras is typically not locally trivial, in this paper we are concerned with locally trivial bundles, {which we will call $C^*$-bundles.}

Strongly self-absorbing $C^*$-algebras \cite{paper:TomsWinter}
are separable unital $C^*$-algebras $D$ defined by a crucial property that they share with the complex numbers $\C$.  Namely, there exists an isomorphism $D\to D\otimes D$
which is unitarily homotopic to the map $d\mapsto d\otimes 1_D$ \cite{Dadarlat-Winter:KK-of-ssa}, \cite{paper:WinterZStable}.
Any strongly self-absorbing $C^*$-algebra $D$ is either stably finite or purely infinite. The latter condition is equivalent to $D\cong D\otimes \OO_\infty$, where  $\OO_\infty$ is the infinite Cuntz algebra.
Due to recent progress in classification theory \cite{Winter:abel} we now have a complete list of all the {strongly} self-absorbing $C^*$-algebras that satisfy the Universal Coefficient Theorem (abbreviated~UCT) in KK-theory, see Subsec.~\ref{subsec:ssa}.

In a series of papers \cite{DP3,DP2,DP1}, we have extended Dixmier-Douady theory \cite{paper:DixmierDouady} and the complex Brauer group \cite{paper:Grothendieck} from $C^*$-bundles with fibers Morita equivalent to $\C$ to $C^*$-bundles with fibers Morita equivalent to a self-absorbing $C^*$-algebra $D$.
Just like in the case of compact operators $\K$ on an infinite dimensional separable Hilbert space when the classifying space  $B\Aut(\K)$  is a model for the Eilenberg-McLane space $K(\Z,3)$,  it turns out that $B\Aut(D \otimes \K)$ is an infinite loop space. This not only implies that the set of isomorphism classes of $D\otimes \K$-bundles $[X,B\Aut(D \otimes \K)]=:E^1_D(X) $ has an abelian group structure as it corresponds to  the 1-group of a generalized cohomology theory $E^*_D(X)$, but also that its study is amenable to methods from stable homotopy theory.
Remarkably, the group law on $E^1_D(X)$ coming from the infinite loop space structure of  $B\Aut(D \otimes \K)$ coincides with the operation induced by the tensor product of $D \otimes \K$-bundles. Moreover,  similarly to the scalar case, the Brauer group associated to bundles with fibers $M_n(\C)\otimes D$, $n\geq 1$, is isomorphic to $\mathrm{Tor}\, \bar{E}^1_D(X)$, where $\bar{E}^1_D(X)$ is the subgroup of $E^1_D(X)$ corresponding to orientable $D \otimes \K$-bundles, \cite{DP3}.

Donovan and Karoubi studied bundles of graded algebras with fibers matrices over (complex) Clifford algebras $\Cliff{n}$ and computed the corresponding graded Brauer group,  \cite{paper:DonovanKaroubi}.
The infinite dimensional version of the graded Brauer group was computed in \cite{paper:Parker-Brauer} by
Parker for bundles with  fibers the graded compact operators $\K$.

In this paper we extend the  results from \cite{paper:DonovanKaroubi}  and \cite{paper:Parker-Brauer}
to bundles with fibers Morita equivalent to $D  \otimes \Cliff{n} $ for strongly self-absorbing $C^*$-algebras $D$.
We do not develop this direction for the sake of generalization alone but because we were led to it in
our attempts to understand the classification of $C^*$-bundles with fibers $O_\infty \otimes \K$
and more generally with fibers $D\otimes O_\infty \otimes \K$ as we are going to explain below.

Our major goal is to compute $E^1_D(X)$ for general strongly self-absorbing $C^*$-algebras~$D$. In addition to \cite{DP2,DP1}, this requires two more steps, the first of which is taken in this paper whereas the second step is completed in a follow-up paper which is joint work with Jim McClure \cite{DMcCP}. \emph{The first step} consists
in showing that in analogy with the ungraded case, the classifying space of the group of graded automorphisms $\Autgr(\Cliff{n} \otimes \K \otimes D)$ has an infinite loop space structure such that the associated generalized cohomology theory $\hE^*_D(X)$ has three attractive properties described in the following three theorems which are contained in Theorem~\ref{thm:neww},  Theorem~\ref{thm:basic}  and Corollary~\ref{Cor:crucial} in the text. The tensor product with $\id{\Cliff{1}}$ induces an equivalence of infinite loop spaces $\Autgr(\Cliff{n} \otimes \K \otimes D) \to \Autgr(\Cliff{n+1} \otimes \K \otimes D)$. Hence, these groups lead to the same cohomology theory for all $n \geq 1$. We assume that $X$ is a finite CW-complex and that $D$ is a stably finite strongly self-absorbing $C^*$-algebra satisfying the UCT.

\begin{theorem}\label{thm1:intro}
The tensor product operation defines a group structure on the isomorphism classes of  locally trivial bundles  of graded $C^*$-algebras $A$ with fibers  $A(x)\cong \Cliff{k(x)}\otimes D \otimes \mathcal{K}$, $x\in X$, $k(x)\geq 1$. This group is
isomorphic to  $H^0(X,\Z/2)\oplus \hE^1_{D}(X)$.
  \end{theorem}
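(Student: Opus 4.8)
The plan is to construct an explicit bijection $\Phi$ from the set of isomorphism classes of the bundles in question onto $H^0(X,\Z/2)\oplus\hE^1_D(X)$ and to check that it carries the tensor product to the group operation on the target; the group structure on isomorphism classes is then automatic. The key algebraic input is that $\Cliff{k}\otimes D\otimes\K\cong\Cliff{l}\otimes D\otimes\K$ as graded $C^*$-algebras if and only if $k\equiv l\pmod 2$. For the ``if'' part one uses the graded isomorphism $\Cliff{2}\otimes\K\cong\K$, which comes from $\Cliff{2}\cong\End(\C^{1|1})$ together with $\C^{1|1}\otimes H\cong H$ for the standard graded separable Hilbert space $H$, and which reduces both sides to $\Cliff{1}\otimes D\otimes\K$ (if $k,l$ odd) or to $\Cliff{2}\otimes D\otimes\K$ (if $k,l$ even). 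For ``only if'' one notes that a graded isomorphism is in particular an ungraded one, while $\Cliff{k}\otimes D\otimes\K$ is a simple $C^*$-algebra when $k$ is even (it is $\cong D\otimes\K$) and a non-simple one when $k$ is odd (it is $\cong(D\otimes\K)^{\oplus 2}$). Consequently the parity function $w_A(x)=k(x)\bmod 2$ depends only on the isomorphism class of $A$ and is locally constant, hence defines a class $w_A\in H^0(X,\Z/2)$; and since $X$ is a finite CW-complex, $k(x)$ is constant on each connected component. Composing the classifying map of $A$ on each component with the stabilisation equivalences $\Autgr(\Cliff{m}\otimes\K\otimes D)\to\Autgr(\Cliff{m+1}\otimes\K\otimes D)$ then yields a well-defined class $c_A\in\hE^1_D(X)=[X,B\Autgr(\Cliff{n}\otimes\K\otimes D)]$ for any fixed $n\ge 1$. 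We set $\Phi(A)=(w_A,c_A)$, which is manifestly well defined on isomorphism classes.

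Next one shows that $\Phi$ is bijective. For surjectivity, represent a prescribed class $c$ by a bundle $B$ with fibre $\Cliff{n}\otimes\K\otimes D$, and on each component where the prescribed parity disagrees with $n\bmod 2$ replace $B$ there by its tensor product with the trivial bundle $X\times(\Cliff{1}\otimes\K\otimes D)$; this changes the fibre to $\Cliff{n+1}\otimes\K\otimes D$, flips the parity, and leaves the class in $\hE^1_D$ unchanged because the additional $\Cliff{1}$-factor is absorbed by stabilisation. For injectivity, suppose $\Phi(A)=\Phi(A')$. On each component the two fibres have the same parity, so by the algebraic input above we may assume $A$ and $A'$ have the same fibre $\Cliff{k}\otimes\K\otimes D$; the classification of such bundles over the finite CW-complex $X$ by $[X,B\Autgr(\Cliff{k}\otimes\K\otimes D)]$, together with the stabilisation equivalences, then converts the equality $c_A=c_{A'}$ into an isomorphism $A\cong A'$.

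It remains to treat the tensor product. Given $A$ and $A'$ in the class, passing to a common trivialising cover and using the graded flip, the identities $\K\otimes\K\cong\K$ and $\Cliff{k}\otimes\Cliff{k'}\cong\Cliff{k+k'}$, and the essentially unique isomorphism $D\otimes D\cong D$ of the strongly self-absorbing algebra $D$, one checks that $A\otimes A'$ is again a locally trivial bundle, now with fibres $\Cliff{k(x)+k'(x)}\otimes D\otimes\K$ and $k(x)+k'(x)\ge 2$, so it lies in the same class of bundles. From $\Cliff{k}\otimes\Cliff{k'}\cong\Cliff{k+k'}$ one gets $w_{A\otimes A'}=w_A+w_{A'}$, and from the fact that the infinite loop space structure of the preceding sections refines the tensor product of bundles (cf.\ Theorem~\ref{thm:neww}) one gets $c_{A\otimes A'}=c_A+c_{A'}$; hence $\Phi(A\otimes A')=\Phi(A)+\Phi(A')$. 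Since $\Phi$ is a bijection onto the abelian group $H^0(X,\Z/2)\oplus\hE^1_D(X)$ that intertwines $\otimes$ with $+$, the tensor product is associative and commutative on isomorphism classes, the trivial bundle $X\times(\Cliff{2}\otimes D\otimes\K)$ is a neutral element (its $\Phi$-class being $(0,0)$), inverses exist, and $\Phi$ is the asserted group isomorphism.

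The main obstacle is verifying that $\otimes$ is a well-defined operation within the given class of bundles, i.e.\ that $A\otimes A'$ remains locally trivial with fibres of the prescribed form: continuity of the glued transition functions of $A\otimes A'$ is precisely where the strong self-absorption of $D$ --- concretely, the uniqueness up to unitary homotopy of the isomorphism $D\to D\otimes D$ quoted in the introduction --- is genuinely used, just as in the ungraded theory of \cite{DP1,DP3}. The identification of $\otimes$ with the infinite loop sum is imported from the earlier sections, and the genuinely new ingredient --- the mod-$2$ Clifford invariant and its splitting off the $H^0(X,\Z/2)$-summand --- is comparatively elementary once the algebraic input above is available.
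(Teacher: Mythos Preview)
Your proposal is correct and follows essentially the same route as the paper, which extracts the mod-$2$ Clifford parity as the $H^0(X,\Z/2)$ component and then invokes Corollary~\ref{rem-cliff-stab} (built on the stabilisation equivalences of Proposition~\ref{lem:Cliff_hom_eq}) componentwise to identify the remaining data with $\hE^1_D(X)$. One fix: your citation of Theorem~\ref{thm:neww} for the compatibility of the tensor product with the infinite-loop sum is circular, since that is the very theorem being proved; the fact you need is established in Corollary~\ref{Rem-crucial}.
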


\begin{theorem}\label{thm2:intro}
 There is an  isomorphism of groups
\[\hE^1_{D}(X)\cong   H^1(X;\Z/2) \times_{_{tw}} E^1_{D}(X)\]
with multiplication on the direct product
\[
	(w, \tau) \cdot (w',\tau') = (w + w', \tau + \tau' + j\circ\beta(w \cup w'))
\]
for $w,w' \in H^1(X,\Z/2)$ and $\tau,\tau' \in E^1_{D}(X)$, where $\beta \colon H^2(X,\Z/2) \to H^3(X,\Z)$ is the Bockstein homomorphism, $j \colon   E^1_{\C}(X)\to E^1_{D}(X)$ is
the map induced by the unital $*$-homomorphism $\C\to D$,   and we identify
$E^1_{\C}(X)\cong H^3(X,\Z)$.
\end{theorem}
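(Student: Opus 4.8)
The plan is to exhibit a concrete homomorphism out of the graded classifying space onto the twisted product $H^1(X;\Z/2)\times_{tw}E^1_D(X)$ and then show it is an isomorphism by a five-lemma / Mayer–Vietoris style argument, or more cleanly by identifying the underlying infinite loop space of $B\Autgr(\Cliff{1}\otimes\K\otimes D)$ with a specific two-stage Postnikov-type extension. First I would define the map to $H^1(X;\Z/2)$: a graded $\Cliff{1}\otimes D\otimes\K$-bundle $A$ has an underlying $\Z/2$-grading, i.e. a self-adjoint order-two automorphism of the underlying ungraded bundle $A$, equivalently (after forgetting the Clifford structure up to Morita equivalence) an orientation class $w(A)\in H^1(X;\Z/2)$ measuring whether the grading automorphism can be implemented by an even unitary locally; this is precisely the obstruction studied by Parker in the scalar case. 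Forgetting the grading and remembering only the ungraded $D\otimes\K$-bundle (using $\Cliff{1}\otimes\K\cong\K$ as ungraded algebras, $M_2$-stably) gives the second coordinate $\tau(A)\in E^1_D(X)$. So the proposed isomorphism is $A\mapsto(w(A),\tau(A))$.

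The main work is computing the group law, i.e.\ showing the cross term is $j\circ\beta(w\cup w')$. Here I would argue on classifying spaces: the fibration $\Aut(D\otimes\K)\to\Autgr(\Cliff{1}\otimes D\otimes\K)\to\Z/2$ (where the quotient records the action on the grading, or rather on $\Cliff{1}$) deloops to a fibration sequence of infinite loop spaces with base $B\Z/2=K(\Z/2,1)$ and fiber $B\Aut(D\otimes\K)$, whose 1-group is $E^1_D(X)$. The extension is classified by a $k$-invariant in the appropriate generalized cohomology of $K(\Z/2,1)$, and the twisted multiplication formula is exactly the statement that this $k$-invariant, restricted to the relevant degree, is the composite $H^*(\;\cdot\;;\Z/2)\xrightarrow{\mathrm{Sq}^1\text{-type / Bockstein}}H^{*+1}(\;\cdot\;;\Z)\xrightarrow{j}E^*_D$. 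Concretely, one traces through how two graded local trivializations with transition cocycles valued in the $\Z/2$-extension multiply: the failure of the product cocycle to split is governed by the commutator pairing on $\Z/2$-valued cochains, which is the cup-square Bockstein $\beta(w\cup w')$, and this lands in $H^3(X;\Z)\cong E^1_\C(X)$ before being pushed into $E^1_D(X)$ by the unital inclusion $\C\to D$. I would make this precise either by an explicit \v{C}ech cocycle computation in a good cover (the cleanest but most calculational route) or by invoking the splitting principle together with the known scalar computation of \cite{paper:Parker-Brauer}, where $E^1_\C(X)=H^3(X;\Z)$ and the formula is classical, and then applying the natural transformation $E^*_\C\to E^*_D$.

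To finish, I would verify bijectivity. Surjectivity: given $w\in H^1(X;\Z/2)$, realize it by a real line bundle $L$, form $\mathrm{Cliff}(L)\otimes D\otimes\K$ to hit $(w,0)$ up to the scalar correction, and combine with an arbitrary $D\otimes\K$-bundle to hit $(w,\tau)$ for all $\tau$; one must check the $w$-coordinate of $\mathrm{Cliff}(L)\otimes D\otimes\K$ is indeed $w$ and its $\tau$-coordinate is the expected (possibly nonzero, $w$-dependent) value, absorbing it by twisting. Injectivity: if $w(A)=0$ and $\tau(A)=0$, then the grading is globally even-implementable and the underlying ungraded bundle is trivial, from which one reconstructs a global graded trivialization of $A$; this uses that $\Autgr(\Cliff{1}\otimes D\otimes\K)\simeq \Z/2\ltimes \Aut(D\otimes\K)$ at the level of homotopy, so vanishing of both invariants forces nullhomotopy of the classifying map. \textbf{The hard part} will be pinning down the cross term precisely — in particular checking that it is $\beta(w\cup w')$ and not, say, $w\cup\beta(w')$ or a $\mathrm{Sq}^2$-type class, and confirming the factor of $j$ rather than some other map $H^3(X;\Z)\to E^1_D(X)$ — which requires a careful, orientation-bookkeeping cocycle argument or a clean reduction to Parker's scalar case via naturality.
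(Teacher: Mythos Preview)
Your overall strategy---set up the short exact sequence with kernel $E^1_D(X)$ and quotient $H^1(X;\Z/2)$, then identify the $2$-cocycle by reduction to the scalar case and push it forward along the natural transformation $E^*_\C\to E^*_D$---is exactly what the paper does. The paper packages the exact sequence at the level of commutative $\mI$-monoids (Lemma~\ref{lemma:K00} and Theorem~\ref{thm:Z2}), proves the scalar case as Proposition~\ref{prop:Z2}, and then uses naturality under $\C\to D$ to transport the cocycle $\beta(w\cup w')$ through $j$, precisely your third option.

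One genuine correction: your definition of the second coordinate $\tau(A)$ via ``forget the grading and use $\Cliff{1}\otimes\K\cong\K$'' does not work. As an ungraded algebra $\Cliff{1}\cong\C\oplus\C$, not $M_2(\C)$, so $\Cliff{1}\otimes\K\cong\K\oplus\K$ and no amount of $M_2$-stabilisation fixes this. The paper instead uses the explicit group isomorphism $\Autgr(\Cliff{1})\times\Aut(D\otimes\K)\cong\Autgr(\Cliff{1}\otimes D\otimes\K)$ of Lemma~\ref{lem:Autgr_Cl1_short_exact} (a direct product, not semidirect---your $\ltimes$ is a slip), which deloops to a product of classifying spaces and gives the set-theoretic bijection $\hE^1_D(X)\to H^1(X;\Z/2)\times E^1_D(X)$ for free; only the inclusion $E^1_D\hookrightarrow\hE^1_D$ is a group map, the projection to $E^1_D$ is not. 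For the scalar cocycle the paper does not cite Parker but computes directly: the cocycle at $(w,w')=(w_1(L),w_1(L'))$ is represented by $\Cliff{L\oplus L'\oplus(L\otimes L')\oplus 1}$, this rank-$4$ bundle is orientable, and Plymen's formula $\delta(\Cliff{V})=\beta(w_2(V))$ together with the Whitney sum formula and the vanishing of $\beta(x^2)$ for $x\in H^1(X;\Z/2)$ yields $\beta(w\cup w')$. This is the clean way to pin down the cross term you flagged as the hard part.
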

\begin{theorem}\label{thm3:intro}
If $D\neq \C$, there is a natural isomorphism
$
		\hE_{D}^*(X)\cong E_{ D \otimes \OO_\infty}^*(X)
$
  \end{theorem}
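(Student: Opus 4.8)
The plan is to deduce the equivalence from Theorem~\ref{thm:basic} together with the description of $E^*_{D\otimes\OO_\infty}$ already available from \cite{DP1,DP2}, by identifying the spectra representing the two theories.

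First I would make the grading on the fibre explicit. Since the $\Cliff{n}$, $n\ge 1$, all give the same theory, we may replace $\Cliff{1}\otimes\K$ by $\Cliff{2}\otimes\K\cong M_2(\K)$ with the inner grading $\mathrm{Ad}(\mathrm{diag}(1,-1))$ (or, equivalently, by the graded compacts on a balanced graded Hilbert space). In either model the grading of $A:=\Cliff{1}\otimes\K\otimes D$ is implemented by a self-adjoint unitary $\epsilon$ in the multiplier algebra with $p:=\tfrac12(1+\epsilon)$ and $1-p$ Murray--von Neumann equivalent. A graded automorphism $\alpha$ satisfies $\alpha(\epsilon)=\pm\epsilon$, so there is a split extension $1\to\Autgr^+(A)\to\Autgr(A)\to\Z/2\to 1$, where $\Autgr^+(A)$ is the subgroup fixing $\epsilon$ and the splitting is conjugation by a symmetry interchanging $p$ and $1-p$. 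Restriction to the corner $pAp\cong D\otimes\K$ gives a split surjection $\Autgr^+(A)\to\Aut(D\otimes\K)$ whose kernel is $U(M(D\otimes\K))$, contractible by a Kuiper-type theorem; hence the section $\Aut(D\otimes\K)\hookrightarrow\Autgr^+(A)$ is a weak equivalence. Thus $B\Autgr(\Cliff{1}\otimes\K\otimes D)$ is $B\Aut(D\otimes\K)$ with a $\Z/2$ adjoined to $\pi_1$, and by Theorem~\ref{thm:basic} the tensor product endows this extension with the twisted multiplication $(w,\tau)\cdot(w',\tau')=(w+w',\,\tau+\tau'+j\circ\beta(w\cup w'))$.

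Next I would recall that $E^*_{D\otimes\OO_\infty}$ admits the same kind of description. For $D\neq\C$ the unital embedding $x\mapsto x\otimes 1_{\OO_\infty}$ induces $\Aut(D\otimes\K)\to\Aut(D\otimes\OO_\infty\otimes\K)$, which by \cite{DP1,DP2} is an isomorphism on $\pi_n$ for $n\ge 1$ and injective on $\pi_0$ with cokernel $K_0(D\otimes\OO_\infty)^{\times}/\KD\cong\Z/2$ (because $K_0(D)$ is a totally ordered ring with $-1<0$). Hence $B\Aut(D\otimes\OO_\infty\otimes\K)$ is again $B\Aut(D\otimes\K)$ with a $\Z/2$ adjoined to $\pi_1$, and in \cite{DP1,DP2} the corresponding extension -- the orientation twist for $D\otimes\OO_\infty\otimes\K$-bundles -- is governed by the composite $H^2(-;\Z/2)\xrightarrow{\beta}H^3(-;\Z)=E^1_\C\xrightarrow{j}E^1_D$, the same Bockstein twist as above. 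Matching the two descriptions: the spectra representing $\hE^*_D$ and $E^*_{D\otimes\OO_\infty}$ are both obtained from the spectrum representing $E^*_D$ by one extension with a shift of $H\Z/2$, with the same $k$-invariant, and the tensor-product pairings agree; a comparison map carrying this data is then a weak equivalence of spectra, which yields the natural isomorphism $\hE^*_D(X)\cong E^*_{D\otimes\OO_\infty}(X)$.

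I expect the main obstacle to be the identification of the two $\Z/2$-extensions -- showing that reversing the orientation of a $D\otimes\OO_\infty\otimes\K$-bundle is controlled by exactly the Bockstein data that controls reversing the Clifford grading -- and then promoting the degree-one comparison of Theorem~\ref{thm:basic} to an equivalence of the full spectra by an obstruction argument over the common fibre (equivalently, extracting the comparison map directly from the proof of Theorem~\ref{thm:basic}). The hypothesis $D\neq\C$ is indispensable already on homotopy groups: for $D=\C$ the space $B\Autgr(\Cliff{1}\otimes\K\otimes\C)$ has vanishing homotopy above degree $3$ (its grading-preserving part is $\Aut(\K)\simeq PU(H)$, with no homotopy beyond $\pi_2$), whereas $\Aut(\OO_\infty\otimes\K)$ has $\pi_{2k}\cong\Z$ for all $k\ge 1$; it is precisely the infinite-dimensionality of $D$, i.e.\ that it absorbs a non-trivial strongly self-absorbing algebra, which makes $\Aut(D\otimes\K)$ -- and hence both theories -- carry the full $\mathrm{GL}_1$-type homotopy needed for the comparison.
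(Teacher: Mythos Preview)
Your approach is genuinely different from the paper's, and it contains a real gap.

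The paper does \emph{not} deduce Theorem~\ref{thm3:intro} from Theorem~\ref{thm:basic}; the logical order is the reverse. The paper's route is via the unit spectrum $gl_1(KU^D)$: the action of $\Ggr_D$ on the ring spectrum $KU^D$ furnishes an explicit morphism of $\Gamma$-spaces $\Gamma(\Ggr_D)\to\Gamma(\Omega^\infty(KU^D)^*)$, which is checked to be a stable equivalence when $D$ is stably finite (Theorem~\ref{thm:action_on_KUD}). One already has from \cite{DP2} the equivalence $\Gamma(G_{D\otimes\OO_\infty})\simeq\Gamma(\Omega^\infty(KU^{D\otimes\OO_\infty})^*)$, and the $KK$-equivalence $D\to D\otimes\OO_\infty$ (available since $D\neq\C$ forces $D\cong D\otimes\ZZ$) gives $\Gamma(\Omega^\infty(KU^D)^*)\simeq\Gamma(\Omega^\infty(KU^{D\otimes\OO_\infty})^*)$. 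Chaining these yields a zig-zag of equivalences of $\Gamma$-spaces, hence of the associated spectra, and the isomorphism of cohomology theories in all degrees follows. The essential point is that every step is an actual map of $\Gamma$-spaces.

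Your proposal never produces such a map. You correctly observe that both infinite loop spaces are, as spaces, built from $B\Aut(D\otimes\K)$ by adjoining a $\Z/2$ to $\pi_1$, and you want to match the resulting extensions. But two connective spectra which happen to sit in fibre sequences with equivalent base and fibre, and which induce the same twist on $H^1\times E^1_D$ in degree~$1$, need not be equivalent; you would have to exhibit a comparison map and show it is a weak equivalence, and your ``obstruction argument over the common fibre'' is precisely the step that is missing. Theorem~\ref{thm:basic} only controls $\hE^1_D$, not the spectrum. Moreover, your input assumption that \cite{DP1,DP2} already identify the extension governing $E^1_{D\otimes\OO_\infty}(X)$ over $E^1_D(X)$ with the Bockstein twist $j\circ\beta$ is not established there; in the present paper that identification is a \emph{consequence} of combining Theorem~\ref{thm3:intro} with Theorem~\ref{thm:basic}, so invoking it here would be circular. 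What your structural analysis of $\Autgr(A)$ does recover is essentially the content of Lemmas~\ref{lem:Autgr_Cl1_short_exact} and~\ref{lem:Autgr_Cl2_equiv}, which the paper uses as ingredients, but the bridge to $E^*_{D\otimes\OO_\infty}$ has to go through $gl_1(KU^D)$.
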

From Theorems~\ref{thm2:intro} and ~\ref{thm3:intro} we deduce that that if $D\neq \C$ is stably finite, then
\[{E}^1_{D\otimes \OO_\infty}(X)\cong  H^1(X;\Z/2)\times_{_{tw}} E^1_{D}(X)\]

\emph{The second step} of our endeavor  is accomplished in  \cite{DMcCP} and concerns the calculation of $E^1_D(X)$ for $D$ stably finite. Altogether one obtains  a calculation for $E^1_{D}(X)$ whether $D$ is stably finite or purely infinite.

Let us preview the result from \cite{DMcCP} for the Jiang-Su algebra $\ZZ$ and for the infinite Cuntz algebra $\OO_\infty\cong \ZZ \otimes \OO_\infty$.
Let $k^*(X)$ denote the complex connective $K$-theory of the space $X$. For a finite based CW-complex $X$ with skeleta $X_i$, $\widetilde{k}^i(X)\cong \widetilde{K}^i(X,X_{i-2})$ and in particular $k^5(X)\cong K^1(X,X_3)$.
\begin{theorem}[\cite{DMcCP}]\label{thm:mainA} Let $X$ be a finite connected CW-complex. There are (not natural) isomorphisms
\begin{itemize}
\item[(a)] ${E}^1_{\ZZ}(X)\cong  H^3(X,\Z)\oplus k^5(X)$.
\item[(b)] ${E}^1_{\OO_\infty}(X)\cong \big(H^1(X, \Z/2)\times_{_{tw}} H^3(X,\Z)\big)\oplus k^5(X)$.
\end{itemize}
The multiplication on $ H^1(X;\Z/2) \times H^3(X,\Z)$ is given by
\[
	(w, \tau) \cdot (w',\tau') = (w + w', \tau + \tau' + \beta(w \cup w'))
\]
for $w,w' \in H^1(X,\Z/2)$ and $\tau,\tau' \in H^3(X,\Z)$, where $\beta \colon H^2(X,\Z/2) \to H^3(X,\Z)$ is the Bockstein homomorphism.
\end{theorem}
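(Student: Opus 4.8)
The plan is to deduce (b) from (a), and to prove (a) by identifying the spectrum that represents $E^*_{\ZZ}$ and peeling off its bottom Postnikov section. For the reduction: since $\OO_\infty\cong\ZZ\otimes\OO_\infty$, the consequence of Theorems~\ref{thm2:intro} and~\ref{thm3:intro} recorded above gives natural isomorphisms $E^1_{\OO_\infty}(X)=E^1_{\ZZ\otimes\OO_\infty}(X)\cong\hE^1_{\ZZ}(X)\cong H^1(X;\Z/2)\times_{_{tw}}E^1_{\ZZ}(X)$, with twisting cocycle $j\circ\beta(w\cup w')$, where $j\colon H^3(X,\Z)=E^1_{\C}(X)\to E^1_{\ZZ}(X)$ is induced by the unital inclusion $\C\hookrightarrow\ZZ$. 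Hence, once (a) is proved in the sharper form of a decomposition $E^1_{\ZZ}(X)\cong H^3(X,\Z)\oplus k^5(X)$ under which $j$ is the inclusion of the first summand, (b) follows at once: the twist lies in that summand, $k^5(X)$ splits off untwisted, and the product acquires the displayed form with $\beta$ in place of $j\circ\beta$. So it is enough to establish (a) together with this compatibility of $j$.

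By \cite{DP1,DP2}, $E^*_{\ZZ}$ is represented by a connective spectrum $\mathbf E_{\ZZ}$ with $\Omega^\infty\Sigma\mathbf E_{\ZZ}\simeq B\Aut(\ZZ\otimes\K)$; concretely $\mathbf E_{\ZZ}$ is (a desuspension of) the universal cover of the classifying spectrum of the units of the complex $K$-theory spectrum $KU$ (using that $K_*(\ZZ)\cong K_*(\C)$ as graded rings, so $KU_{\ZZ}\simeq KU$), and therefore $\pi_n\mathbf E_{\ZZ}=\Z$ for even $n\ge 2$ and $0$ otherwise. By functoriality in $D$, the inclusion $\C\hookrightarrow\ZZ$ induces a map of spectra $\mathbf E_{\C}\to\mathbf E_{\ZZ}$, where $\mathbf E_{\C}=\Sigma^2 H\Z$ represents $E^*_{\C}$ (so that $B\Aut(\K)=K(\Z,3)$); it is an isomorphism on the bottom homotopy group $\pi_2\cong K_0(\C)\cong\Z$. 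The mere existence of such a map is already special: there is no map $\Sigma^2 H\Z\to\Sigma^2 k$ that is an isomorphism on $\pi_2$ (it would amount to a map $H\Z\to k$), which is precisely why $E^1_{\ZZ}(X)$ is not simply $k^3(X)$.

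Set $\mathbf Y:=\mathrm{cofib}(\mathbf E_{\C}\to\mathbf E_{\ZZ})$. The long exact homotopy sequence shows $\mathbf Y$ is $3$-connected with $\pi_n\mathbf Y=\Z$ for even $n\ge 4$. Being $3$-connected, $\mathbf Y$ satisfies $H^3(\mathbf Y;\Z)=0$, so the connecting map $\mathbf Y\to\Sigma\mathbf E_{\C}=\Sigma^3 H\Z$ is null; hence the cofibre sequence splits and
\[
\mathbf E_{\ZZ}\ \simeq\ \Sigma^2 H\Z\ \vee\ \mathbf Y,\qquad\text{so}\qquad E^1_{\ZZ}(X)\ \cong\ H^3(X,\Z)\ \oplus\ \mathbf Y^1(X),
\]
with $j$ the inclusion of the first summand; this part of the isomorphism is natural in $X$. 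Equivalently, the first Postnikov invariant of $\mathbf E_{\ZZ}$ vanishes, in contrast with that of the additive spectrum $\Sigma^2 k$, which is the nonzero operation $\widetilde{Sq^3}=\beta\,Sq^2\,\rho\in H^3(H\Z;\Z)\cong\Z/2$; the non-vanishing of $\widetilde{Sq^3}$, already detectable on a finite skeleton of $K(\Z,3)$, is exactly what obstructs the analogous splitting in the additive case and makes the passage through the units of $K$-theory unavoidable.

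It remains to identify $\mathbf Y^1(X)$ with $k^5(X)$. The spectrum $\mathbf Y$ has exactly the homotopy groups of $\Sigma^4 k$, and its Postnikov invariants are those inherited from $\mathrm{gl}_1(KU)$; one must show these coincide with the Postnikov invariants of $\Sigma^4 k$ — equivalently that $\mathbf Y\simeq\Sigma^4 k$, or at least that $\mathbf Y$ and $\Sigma^4 k$ are indistinguishable on all finite complexes. This is where the real work lies: it requires controlling the multiplicative Postnikov tower of $KU$ precisely enough, via the classical analysis of the units of complex $K$-theory (Sullivan; Madsen--Snaith--Tornehave) and the arithmetic of the $J$-homomorphism-type corrections that separate it from the additive tower. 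Granting this, the displayed splitting yields $E^1_{\ZZ}(X)\cong H^3(X,\Z)\oplus k^5(X)$ with $j$ the first inclusion — hence (a) — and (b) then follows as above; the relation $\widetilde k^i(X)\cong\widetilde K^i(X,X_{i-2})$, in particular $k^5(X)\cong K^1(X,X_3)$, is the standard comparison of connective with periodic $K$-theory on skeleta, and the isomorphisms are not natural precisely because this last identification of $\mathbf Y^1$ with $k^5$ is not implemented by a canonical map. The main obstacle is thus this final step, a purely stable-homotopy-theoretic computation; everything else reduces to long exact sequences, the Atiyah--Hirzebruch spectral sequence, and the inputs already furnished by \cite{DP1,DP2} and Theorems~\ref{thm2:intro}--\ref{thm3:intro}.
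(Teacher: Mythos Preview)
This theorem is not proved in the present paper; it is explicitly stated as a preview of results from the follow-up work \cite{DMcCP}, and the text refers the reader there for the argument. So there is no proof here against which to compare your proposal.

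That said, your outline is a plausible strategy and parts of it are correct. The reduction of (b) to (a) via Theorems~\ref{thm2:intro} and~\ref{thm3:intro} is precisely what the present paper sets up, and your cofiber argument splitting off $\Sigma^2 H\Z$ from $\mathbf{E}_\ZZ$ is sound: the map $\mathbf{E}_\C = \Sigma^2 H\Z \to \mathbf{E}_\ZZ$ induced by $\C \hookrightarrow \ZZ$ is a $\pi_2$-isomorphism, the cofiber $\mathbf{Y}$ is $3$-connected, hence the connecting map to $\Sigma^3 H\Z$ vanishes and the sequence splits. This gives a \emph{natural} decomposition $E^1_\ZZ(X) \cong H^3(X,\Z) \oplus \mathbf{Y}^1(X)$ with $j$ the inclusion of the first summand.

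The genuine gap --- which you yourself flag --- is the identification $\mathbf{Y}^1(X) \cong k^5(X)$. Note a tension in your wording: you write ``equivalently that $\mathbf{Y} \simeq \Sigma^4 k$'', but a spectrum-level equivalence would render the whole isomorphism in (a) natural, contradicting the theorem's explicit ``not natural''. The operative claim is the weaker one you also offer --- agreement on each finite complex, not implemented by a single map of spectra --- and establishing this is exactly the content deferred to \cite{DMcCP}. The relevant ingredients are indeed Adams--Priddy and Madsen--Snaith--Tornehave type comparisons of the connected covers of $gl_1(KU)$ with those of $ku$ (both \cite{AP} and \cite{MST} appear in the bibliography for this reason), but assembling the prime-by-prime equivalences into an integral statement on finite complexes, and tracking where naturality fails, is real work that your proposal names without carrying out. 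In short: your architecture is right, but the load-bearing step remains ``granted'' rather than proved.
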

We refer the reader to \cite{DMcCP} for a computation of $E^1_{D}(X)$ and  $E^1_{{\OO_\infty}\otimes D}(X)$ for all stably finite strongly self-absorbing $C^*$-algebras  satisfying the UCT.

In the last part of the paper we show that the Brauer group $\hat{Br}_D(X)$ arising from graded bundles with fibers 
$M_n(D)\otimes \Cliff{k}$ is isomorphic to $$H^0(X,\Z/2) \times H^1(X,\Z/2)\times_{_{tw}} \Tor{\bar{E}^1_D(X)},$$
where $\bar{E}^1_D(X)\subset {E}^1_D(X)$ classifies $D\otimes K$-bundles with structure group $\Aut_0(D\otimes\K)$, the connected component of identity of $\Aut(D\otimes\K)$.

Let us give some background and describe in more detail the strategy taken in this paper.
The link between cohomology theories, spectra and infinite loop spaces is reviewed in Sec.~\ref{subsub-scth}.
We rely on work of Schlichtkrull ~\cite{paper:Schlichtkrull}.
Let $\text{Top}_*$ denote  the category of  based compactly generated Hausdorff spaces. Let $\mI$ be the category with objects $\mathbf{n} = \{1, \dots, n\}$ for $n \in \N_0$ (so including the empty set $\mathbf{0}$) and morphisms given by injective maps. This is a symmetric monoidal category where $\mathbf{m} \sqcup \mathbf{n} = \{1, \dots, m+n\}$ and $f \sqcup g$ acts by identifying the first $m$ entries of $\mathbf{m} \sqcup \mathbf{n}$ with $\mathbf{m}$ and the last $n$ entries with $\mathbf{n}$. The symmetry $\mathbf{m} \sqcup \mathbf{n} \to \mathbf{n} \sqcup \mathbf{m}$ is given by block permutation. An $\mI$-space is a functor $\mI \to \text{Top}_*$ and an $\mI$-monoid is a monoid object in the category of $\mI$-spaces (note that this makes use of the monoidal structure on $\mI$). The symmetry of $\mI$ can be used to define commutative $\mI$-monoids. These provide the input to one of the many infinite loop space machines in algebraic topology \cite{paper:MayThomason}. More precisely, any commutative $\mI$-monoid $X$ gives rise to a $\Gamma$-space  denoted by $\Gamma(X)$ and if the $\mI$-space is grouplike in the sense that the monoid $\pi_0(\hocolim_{\mI} X)$ is actually a group, then $X_{h\mI} = \hocolim_{\mI} X$ is an infinite loop space and the higher deloopings, ie.\ the spaces in the $\Omega$-spectrum are explicitly constructible from the $\Gamma$-space structure \cite{paper:SegalCatAndCoh}.
  
The category of spectra is a closed model category in the sense of Quillen. In particular, it comes with distinguished subsets of the morphisms called weak equivalences, fibrations and cofibrations generalising the corresponding concepts for topological spaces. Using this structure it is possible to formally invert the weak equivalences. The resulting category is called the stable homotopy category of spectra.  Using the Atiyah-Hirzebruch spectral sequence one verifies  that a weak equivalence of spectra induces a natural isomorphism of the cohomology theories on finite CW-complexes represented by them.  In \cite{BousfieldFriedlander} Bousfield and Friedlander developed a similar model category structure on $\Gamma$-objects in simplicial sets, such that its homotopy category is equivalent to the stable homotopy category of connective spectra \cite[Thm.~5.8]{BousfieldFriedlander}. This was later refined by Schwede in \cite{paper:SchwedeGamma}, who also extended it to $\Gamma$-spaces (i.e.\ $\Gamma$-objects in topological spaces) and proved it to be immaterial whether one considers topological spaces or simplicial sets \cite[Thm.~B1]{paper:SchwedeGamma}. While Schwede's model category structure has slightly different fibrations and cofibrations, it does have the same weak equivalences. Thus, its homotopy category is still equivalent to the one of connective spectra.

We will not use the full model category structure in this paper, but we will frequently adopt the following notation: If a map of $\Gamma$-spaces $\Gamma(X)\to \Gamma(Y)$ induces an equivalence in the stable homotopy category of spectra, we write  $\Gamma(X)\simeq \Gamma(Y)$. In this case $\Gamma(X)$ and $\Gamma(Y)$ are called stably weakly equivalent (see \cite{BousfieldFriedlander} or \cite[p.~331]{paper:SchwedeGamma}).

In this paper we focus on three commutative $\mI$-monoids $(\Omega^\infty KU^D)^*$,
$\G_D$, and  $\Ggr_D$,  associated to a strongly self-absorbing $C^*$-algebra, and which we describe in the sequel. In order to prove Theorem~\ref{thm3:intro}
we show first that {for stably finite $D$} there is an equivalence of $\Gamma$-spaces
 \begin{equation}\label{eq:333}
		 \Gamma(\Ggr_D)\simeq \Gamma(\Omega^\infty KU^D).
\end{equation}
We have shown in \cite{DP2} that
\begin{equation}\label{eq:444}
\Gamma(\G_{D\otimes \OO_\infty})\simeq \Gamma(\Omega^\infty KU^{D\otimes \OO_\infty}).
\end{equation}

From \eqref{eq:333} and  \eqref{eq:444}  we obtain that if $D\neq \C$ is a stably finite strongly self-absorbing $C^*$-algebra satisfying the UCT, then 
\begin{equation}\label{eq:222}
		\Gamma(\Ggr_D)\simeq \Gamma(\G_{D\otimes \OO_\infty})
\end{equation}
 since
$\Gamma(\Omega^\infty KU^D)\simeq \Gamma(\Omega^\infty KU^{D\otimes \OO_\infty})$ by \cite{DP2}.
Now Theorem~\ref{thm3:intro} follows from \eqref{eq:222} since $\hE_{D}^*(X)$ is the cohomology theory associated to $\Gamma(\Ggr_D)$
while $E_{ D \otimes \OO_\infty}^*(X)$ is associated to
$\Gamma(\G_{D\otimes \OO_\infty})$.

Let us discuss $(\Omega^\infty KU^D)^*$ first.
We have shown in \cite{DP2} that each strongly self-absorbing $C^*$-algebra~$D$ gives rise to a symmetric spectrum $KU^D$ with $n$th space
\[
	KU_n^D = \hom_{\text{gr}}(\Sgr, (\Cliff{1} \otimes D \otimes \K)^{\otimes n}),
\]
where $\Sgr = C_0(\R)$ is viewed as a graded $C^*$-algebra equipped with the grading by odd and even functions and $\Cliff{d}$ is the complex Clifford algebra. Moreover, the sequence of spaces $\Omega KU^D_1, KU^D_1, KU^D_2, \dots$ is an $\Omega$-spectrum, in which the maps $KU^D_n \to \Omega KU^D_{n+1}$ for $n \geq 1$ are induced by Bott periodicity. The associated cohomology theory is $X \mapsto K_*(C(X) \otimes D)$, \cite[Sec.~1.5]{Gue-Hig:book} (see also \cite[Constr.~6.4.9]{book:SchwedeGlobal}). 

In fact, since $D$ is strongly self-absorbing, these groups form a multiplicative cohomology theory. There is a comultiplication on $\Delta \colon \Sgr \to \Sgr \otimes \Sgr$ that can be used to give $KU^D$ the structure of a commutative symmetric ring spectrum implementing this multiplication. Just as a commutative unital ring~$S$ has a group of invertible elements $GL_1(S)$, a commutative symmetric ring spectrum $R$ has an associated spectrum of units $gl_1(R)$. The associated infinite loop space $GL_1(R)$ consists of those path-components of $\Omega^\infty R$ that are invertible in the ring~$\pi_0(R)$.

In~\cite{paper:Schlichtkrull} Schlichtkrull found a convenient way of obtaining the spectrum $gl_1(R)$ as a diagram spectrum.
The commutative $\mI$-monoid that gives the unit spectrum of $KU^D$ is defined as follows: First note that there is a unit element $\eta_m \in \Omega^m KU^D_m$ corresponding to the map $S^m \to KU^D_m$ that is part of the structure of a symmetric ring spectrum. Denote by $\mu_{m,n} \colon KU^D_m \wedge KU^D_n \to KU^D_{m+n}$ the ring spectrum multiplication. Let $(\Omega^n KU^D_n)^*$ be those elements that are stably invertible in the sense that there exists $g \in \Omega^m KU^D_m$ for some $m \in \N_0$ such that $\mu_{n,m} \circ (f \wedge g)$ and $\mu_{m,n} \circ (g \wedge f)$ are both homotopic to $\eta_{n+m} \in \Omega^{n+m} KU^D_{n+m}$. The sequence of spaces
\[
	(\Omega^\infty KU^D)^*(\mathbf{n}) = (\Omega^n KU^D_n)^*\
\]
extends to a commutative $\mI$-monoid that gives the unit spectrum $gl_1(KU^D)$ when plugged into the infinite loop space machine.

The sequence of groups $G_D(\mathbf{n}) = \Aut((D \otimes \K)^{\otimes n})$ equipped with the pointwise norm topology can be extended to a commutative $\mI$-monoid as well and therefore also gives rise to an $\Omega$-spectrum. We showed in \cite{DP2} that the associated cohomology theory $E_D^*(X)$ satisfies
\[
	E^0_D \cong [X, \Aut(D \otimes \K)] \quad , \quad E^1_D \cong [X, B\Aut(D \otimes \K)].
\]
For each $n \in \N$ the group $\Aut((D \otimes \K)^{\otimes n})$ acts on $KU^D_n$ by composition. This action induces a morphism of commutative $\mI$-monoids
\(
	G_D \to (\Omega^\infty KU^D)^*,
\)
which then gives rise to a map
\(
	B\Aut(D \otimes \K) \to BGL_1(KU^D)
\)
that turns out to be an isomorphism on all homotopy groups except for $\pi_1$, where it corresponds to the embedding $GL_1(K_0(D))_+ \to GL_1(K_0(D))$. Hence, the above map is an equivalence for purely infinite strongly self-absorbing $C^*$-algebras, since $GL_1(K_0(D))_+ = GL_1(K_0(D))$ in this case.

The $\mI$-monoid $G_D$ is constructed from automorphisms of the ungraded $C^*$-algebras $D \otimes \K$ and ignores the Clifford algebras that feature in the definition of $KU^D$. In this paper we will complete the picture by also considering
the commutative $\mI$-monoid $\Ggr_D$ of graded automorphisms given by
\[
	\Ggr_D(\mathbf{n}) = \Autgr((\Cliff{1} \otimes D \otimes \K)^{\otimes n})\ .
\]
As alluded to above, $\pi_0(GL_1(KU^D)) \cong GL_1(K_0(D))$, while $\pi_0(\Aut(D \otimes \K)) \cong GL_1(K_0(D))_+$. Because of the close link between the graded Clifford algebras and the additive group completion in $K$-theory, the expected effect of including them into the definition of $G_D$ is that $\pi_0$ should change from $GL_1(K_0(D))_+$ to $GL_1(K_0(D))$. As we will show in  Lem.~\ref{lem:Autgr_Cl1_short_exact} and Lem.~\ref{lem:Autgr_Cl2_equiv}, we have
	\[
		\pi_0(\Ggr_D(\mathbf{n})) \cong \Z/2 \times K_0(D)^\times_+\ .
	\]
In case $D$ is purely infinite we see that we are ``group-completing twice'' here. Nevertheless, the action on $KU^D$ remedies this and our main technical result (namely, the isomorphism  \eqref{eq:333}) can be summarized as follows (see Thm.~\ref{thm:action_on_KUD}):

\begin{theorem}
	The canonical action of the commutative $\mI$-monoid $\Ggr_D$ on the ring spectrum $KU^D$ gives a map of $\Gamma$-spaces
	$\Gamma(\Ggr_D) \to \Gamma(\Omega^\infty(KU^D)^*)$ which induces an isomorphism on all homotopy groups $\pi_n$ with $n > 0$ of the corresponding connective spectra and the homomorphism
		\(
		\{\pm 1\} \times K_0(D)_{+}^\times \to K_0(D)^\times, \,(a,b) \mapsto a \cdot b
	\)
	on $\pi_0$. In particular, it is an equivalence in the stable homotopy category of spectra if $D$ is stably finite and satisfies the UCT.
\end{theorem}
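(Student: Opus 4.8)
The plan is to build the comparison map directly at the level of commutative $\mI$-monoids, feed it to Schlichtkrull's machine, and then bootstrap the homotopy-group statements from the ungraded case of \cite{DP2}. For each $n$ the group $\Ggr_D(\mathbf{n}) = \Autgr((\Cliff{1}\otimes D\otimes\K)^{\otimes n})$ acts on $KU^D_n = \hom_{\text{gr}}(\Sgr, (\Cliff{1}\otimes D\otimes\K)^{\otimes n})$ by post-composition, and one checks this action is compatible with the structure maps, the $\Sigma_n$-symmetries and the monoidal products of both $\mI$-monoids, so that it defines a morphism of commutative $\mI$-monoids once it is shown to land in the stably invertible part. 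For the latter, if $\alpha\in\Ggr_D(\mathbf{n})$ then $\alpha\circ\eta_n$ has a stable inverse $\alpha^{-1}\circ\eta_n$, since $\mu_{n,n}\circ((\alpha\circ\eta_n)\wedge(\alpha^{-1}\circ\eta_n)) = (\alpha\otimes\alpha^{-1})\circ\eta_{2n}$ and $\alpha\otimes\alpha^{-1}$ acts trivially on $\pi_0(KU^D) = K_0(D)$ (its class there is $[\alpha][\alpha]^{-1} = 1$), so this is homotopic to $\eta_{2n}$. Applying $\Gamma(-)$ to $\Ggr_D\to(\Omega^\infty KU^D)^*$ gives the asserted map of $\Gamma$-spaces. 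Both $\mI$-monoids are grouplike --- $\pi_0(\hocolim_\mI(-))$ is $\Z/2\times K_0(D)^\times_+$ for the source by Lem.~\ref{lem:Autgr_Cl1_short_exact} and Lem.~\ref{lem:Autgr_Cl2_equiv}, and $GL_1(K_0(D)) = K_0(D)^\times$ for the target by \cite{paper:Schlichtkrull}, both actual groups --- so the associated spectra are connective $\Omega$-spectra with $\pi_m \cong \pi_m(\hocolim_\mI(-))$ for all $m\geq 0$, and it suffices to analyse the induced map there.

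For $m>0$ I would compare with the ungraded picture. The formula $\alpha\mapsto\id{(\Cliff{1})^{\otimes n}}\otimes\alpha$ defines a morphism of commutative $\mI$-monoids $\G_D\to\Ggr_D$, and by the way the $\G_D$-action on $KU^D$ is defined in \cite{DP2} the composite $\G_D\to\Ggr_D\to(\Omega^\infty KU^D)^*$ is exactly the ungraded action map, which by \cite{DP2} is an isomorphism on $\pi_m$ for all $m>0$. On the other hand, Lem.~\ref{lem:Autgr_Cl1_short_exact} and Lem.~\ref{lem:Autgr_Cl2_equiv} exhibit $\Ggr_D(\mathbf{n})$ as a split extension of $\Z/2$ by a group whose identity component is carried isomorphically by $\alpha\mapsto\id{}\otimes\alpha$ onto that of $\G_D(\mathbf{n})$; hence $\G_D\to\Ggr_D$ is an isomorphism on $\pi_m$ for $m>0$ after $\hocolim_\mI$. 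A two-out-of-three argument on homotopy groups then gives that $\Gamma(\Ggr_D)\to\Gamma(\Omega^\infty(KU^D)^*)$ is an isomorphism on $\pi_m$ for $m>0$.

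On $\pi_0$ the map is a homomorphism $\Z/2\times K_0(D)^\times_+\to K_0(D)^\times$, hence determined by its restriction to the two factors. On $K_0(D)^\times_+$ it is the ungraded $\pi_0$-map, which by \cite{DP2} is the canonical inclusion. The $\Z/2$-factor is generated, via the splitting of Lem.~\ref{lem:Autgr_Cl1_short_exact}, by the grading automorphism $\gamma$ of $\Cliff{1}$ tensored with identities; its image in $K_0(D)^\times$ is the class of $\gamma\circ\eta_n$, equivalently the value of $\gamma_*$ on the generator $1$ of $\pi_0(KU^D) = K_0(D)$, and a short graded-$K$-theory computation --- the nontrivial graded automorphism of the complex Clifford algebra $\Cliff{1}$ reverses the Bott generator --- shows this class is $-1$. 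Multiplicativity of the homomorphism then forces $(a,b)\mapsto a\cdot b$. For the concluding clause: when $D$ is stably finite, $-1$ is not positive in the ordered group $K_0(D)$, so the map is injective; and by the classification of stably finite strongly self-absorbing $C^*$-algebras satisfying the UCT, $K_0(D)$ is an ordered subring of $\Q$ in which every unit is $\pm$ a positive unit, so the map is surjective. Hence $(a,b)\mapsto a\cdot b$ is an isomorphism, the $\Gamma$-space map induces isomorphisms on all homotopy groups of the associated connective spectra, and is therefore a stable weak equivalence, i.e.\ an equivalence in the stable homotopy category of spectra.

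The step I expect to be hardest is the $\pi_{>0}$-isomorphism, and within it the structural computations Lem.~\ref{lem:Autgr_Cl1_short_exact}--\ref{lem:Autgr_Cl2_equiv} together with verifying that the factorization $\G_D\to\Ggr_D\to(\Omega^\infty KU^D)^*$ really is, up to homotopy of $\Gamma$-space maps, the ungraded action map of \cite{DP2}; one also has to be careful that post-composition by $\Ggr_D$ is compatible with the commutative symmetric ring spectrum structure on $KU^D$ (the comultiplication $\Delta$ on $\Sgr$ and the concatenation product), so that the action genuinely lands in the unit $\mI$-monoid rather than merely in $\Omega^\infty KU^D$. The identification of the sign contributed by the Clifford grading, while morally transparent, similarly rests on a genuine graded-$K$-theory argument rather than a formal manipulation.
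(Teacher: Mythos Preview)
Your proposal is correct and follows essentially the same route as the paper: factor through the ungraded morphism $\G_D \to \Ggr_D \to (\Omega^\infty KU^D)^*$, invoke \cite{DP2} for $\pi_{>0}$, and compute $\pi_0$ by hand on the two factors of $\Z/2 \times K_0(D)^\times_+$. Two small remarks. First, your argument that the action lands in the stably invertible components is slightly circular as written (``its class there is $[\alpha][\alpha]^{-1}=1$'' presupposes the map is multiplicative into units); the paper instead checks the action axioms of \cite[Def.~3.7]{DP2} and then invokes \cite[Thm.~3.8]{DP2}, which packages the observation that a monoid map from a group lands in invertibles. Second, for the sign on $\pi_0$ the paper gives the concrete identity $\theta\circ\eta_1 = \eta_1\circ i$ with $i\colon S^1\to S^1$ the orientation reversal, which makes the ``short graded-$K$-theory computation'' you allude to entirely explicit.
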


The article is structured as follows: Section~2 contains preliminary material about strongly self-absorbing $C^*$-algebras -- including a full list of all of the ones that satisfy the UCT -- followed by some background about graded $C^*$-algebras.
In Section~3 we study the homotopy type of the automorphism groups $\Autgr(\Cliff{n} \otimes A)$ for simple, trivially graded $C^*$-algebras $A$ (sometimes under the additional stability assumption that $A \otimes \K \cong A$).

In Section~4  we give some background on spectra  and  review the necessary details about commutative $\mI$-monoids and their relation to unit spectra.
We give details about the spectrum $KU^D$ and its unit spectrum. Then we highlight the relationship between $KU^{M_P}$ and the localisation $KU[P^{-1}]$.

Section~5 contains the first main result of this work.  In Subsect.~5.1 we give a summary of the previously obtained results on generalised Dixmier-Douady theory.
The commutative $\mI$-monoid $\Ggr_D$ is introduced in Sec.~5.2 and its stable homotopy type is studied in Lem.~\ref{lem:Ggr_stable} and Lem.~\ref{lem:comp_inv}. The comparison with the commutative $\mI$-monoid that represents the unit spectrum of $KU^D$ is then completed in Thm.~\ref{thm:action_on_KUD}. The resulting list of commutative $\mI$-monoids that represent the same cohomology theories is given in Cor.~\ref{Cor:crucial}. This corollary is crucial for the computation of $E_D^1(X)$ for stably finite $D$ discussed in Section~6. In Subsect.~\ref{The Brauer group} we compute the graded Brauer group, see Theorem~\ref{thm:Brauer_Serre} .

\section{$C^*$-algebras}

\subsection{Strongly self-absorbing $C^*$-algebras}\label{subsec:ssa}
The class of strongly self-absorbing $C^*$-algebras was introduced by Toms and Winter \cite{paper:TomsWinter}.
They are separable unital $C^*$-algebras $D$ singled out by the property that there exists an isomorphism $D\to D\otimes D$
which is unitarily homotopic to the map $d\mapsto d\otimes 1_D$ \cite{Dadarlat-Winter:KK-of-ssa}, \cite{paper:WinterZStable}.
Any strongly self-absorbing $C^*$-algebra is either stably finite or purely infinite.
Due to recent progress in classification theory \cite{Winter:abel} we now have a complete list of all the self-absorbing $C^*$-algebras that satisfy the Universal Coefficient Theorem (abbreviated~UCT) in KK-theory. We review this list below.

Recall that $\ZZ$ denotes the Jiang-Su algebra, $\OO_2$  the Cuntz algebra with two generators and $\OO_\infty$  the Cuntz algebra on infinitely many generators.
The $C^*$-algebra $\ZZ$ can be viewed as the infinite dimensional stably finite version of $\C$ whereas $\OO_\infty$ can be viewed as the purely infinite version. There are isomorphisms $\ZZ\otimes \C \cong \ZZ$ and $\OO_\infty \otimes \ZZ \cong \OO_\infty$.

The unital $*$-homomorphisms $\C \to \ZZ \to \OO_\infty$ are $KK$-equivalences. $\OO_2$ is KK-contractible.
All  UCT strongly self-absorbing $C^*$-algebras with the exception of $\C$ and $\OO_2$ are obtained from either $\ZZ$ or  $\OO_\infty$ via a construction similar to localization at a set of primes.

For a prime $p$, we let $ M_{p}$ denote the infinite tensor product
$ M_{p} =M_p(\C)^{\otimes \infty}$. If $P$ is a set of primes, then
$M_P$ is defined as the tensor product
\[ M_P=\bigotimes_{p\in P} M_{p}.\]
We adopt the following convention:
if   $P=\emptyset$, then $M_P=\C$.
Any UCT stably finite self-absorbing $C^*$-algebra is isomorphic to either:
$\C$, $\ZZ$, or $M_P$ for some nonempty set $P$ of primes.
Any purely infinite UCT self-absorbing $C^*$-algebra is isomorphic to either:
$\OO_\infty$,  or $M_P\otimes \OO_\infty$ for some nonempty set $P$ of primes or to $\OO_2$.

It is known that $M_P\otimes \ZZ \cong M_P$. More generally if $D$ is strongly self-absorbing and $D\neq \C$, then $D\cong D\otimes  \ZZ$. Note that that if $P\subset P'$, then $M_{P'}\otimes M_{P}\cong M_{P'}$.
If  $P$ is the set of all primes, we denote $M_P$ by $\QQ$. The following diagram is illustrative of the relationships between strongly self-absorbing $C^*$-algebras.
\[
\begin{tikzcd}[row sep=0.4cm]
{} &\ZZ\ar[dd] \ar[r]  & M_P \ar[dd]\ar[r] & {\QQ}\ar[dd]\ar[dr] &{}\\
{\C}\ar[ur]\ar[dr] &{} & {}  & {}&{\OO_2}\\
{} &\OO_\infty \ar[r]  & \OO_\infty \otimes M_P\ar[r] & {\OO_\infty \otimes \QQ}\ar[ur] &{}	
\end{tikzcd}
\]
An arrow $D \to D'$ in the diagram above indicates the property that $D'\otimes D \cong D'$. If $P$ is the set of all primes different from a fixed prime $p$, the corresponding algebra $M_P$ is denoted by $M_{(p)}$.

If $D$ is any UCT strongly self-absorbing $C^*$-algebra, then  $K_1(D)=0$ and
 $K_0(D)\cong K_0(\OO_\infty \otimes D)$ has a natural unital commutative ring structure with multiplication
 induced by the isomorphism $D\otimes D \cong D$. Let $\Z_p=\Z[\frac{1}{p}]$ denote the localization of $\Z$ away from $p$
and  $\Z_{(p)}$ the localization of $\Z$ at $p$.
 We have
 \[K_0(\C)\cong K_0(\ZZ)\cong K(\OO_\infty)\cong \Z.\]
  \[K_0(M_P)\cong  K_0(M_P\otimes \OO_\infty)  \cong\bigotimes_{p\in P}\Z_p=:\Z_P,\]
  \[K_0(\OO_2)=0.\]

Thus:
 $K_0(M_{p})\cong \Z_p$,
$K_0(M_{(p)})\cong \Z_{(p)}$, and $K_0(\QQ)\cong \Q$.
If $D$ is purely infinite $D\otimes \OO_\infty \cong D$.
%The purely infinite strongly self-absorbing $C^*$-algebras can be viewed as group completions (in the sense of homotopy theory) of their stable finite variety.
The invertible elements of the commutative ring $K_0(D)$ are denoted by $K_0(D)^{\times}=GL_1(K_0(D))$. The subgroup of positive elements of  $K_0(D)^{\times}$ corresponding to classes of idempotents in $D\otimes \K$ is denoted by $K_0(D)^{\times}_{+}$. If $D$ is in addition purely infinite then $K_0(D)^{\times}_{+}=K_0(D)^{\times}$.

\subsection{Graded $C^*$-algebras}
We recall some basic points about graded $C^*$-algebras, mostly to fix notation: A \emph{grading} on a $C^*$-algebra $A$ is an automorphism $\gamma \in \Aut(A)$, called the \emph{grading automorphism}, with $\gamma^2 = \id{A}$. We define
\[
	A^{\rm ev} = \{ a\in A \ | \ \gamma(a) = a \} \qquad \text{and} \qquad A^{\rm odd} = \{ a\in A \ | \ \gamma(a) = -a \}
\]
and note that both are closed, linear and self-adjoint subspaces of $A$ such that $A = A^{\rm ev} \oplus A^{\rm odd}$ as Banach spaces. Moreover, $A^{\rm ev}$ is a $C^*$-subalgebra of $A$ and $A^{\rm odd} \cdot A^{\rm odd} \subseteq A^{\rm ev}$, $A^{\rm odd} \cdot A^{\rm ev} \subseteq A^{\rm odd}$ and $A^{\rm ev} \cdot A^{\rm odd} \subseteq A^{\rm odd}$. If $\gamma = \id{A}$, then $A^{\rm ev} = A$, $A^{\rm odd} = 0$ and we say that $A$ is \emph{trivially graded}. We say that elements of $A^{\rm ev}$ have degree $0$ and write $\deg(a) = 0$ for $a \in A^{\rm ev}$, likewise elements of $A^{\rm odd}$ have degree $1$ and we write $\deg(a) = 1$ for $a \in A^{\rm odd}$. These elements are called \emph{homogeneous}. Recall that \emph{graded tensor products} of two graded $C^*$-algebras $A$ and $B$ are obtained as completions of the algebraic tensor product $A \odot B$ equipped with the multiplication and involution defined on homogeneous elements by
\[
	(a \otimes b) \cdot (a' \otimes b') = (-1)^{\deg(b)\cdot \deg(a')}\,aa' \otimes bb' \qquad \text{and} \qquad (a \otimes b)^* = (-1)^{\deg(a)\cdot \deg(b)} a^* \otimes b^*
\]
for $a \in A$ and $b \in B$. We will use the symbol $\otimes$ for the minimal graded tensor product. The tensor flip in graded $C^*$-algebras takes the following form on homogeneous elements:
\begin{equation} \label{eqn:tensor_flip}
	\epsilon_{A,B} \colon A \otimes B \to B \otimes A \qquad , \qquad a \otimes b \mapsto (-1)^{\deg(a)\deg(b)} b \otimes a
\end{equation}
\subsubsection{Standard even grading}\label{even grading} If $A$ is any ungraded $C^*$-algebra, the standard even grading of $M_2(A)\cong M_2(\C)\otimes A$ is induced by the diagonal/off-diagonal grading of $M_2(\C)$. By identifying the compact operators $\K$ with $M_2(\K)$ one obtains the standard even grading of $\K$ (the choices made here are inconsequential) \cite{Bla:k-theory}.
\subsubsection{Clifford algebras}
One of the main example of graded $C^*$-algebras that we need are the Clifford algebras $\Cliff{n}$. Let $V = \C^n$ and consider the tensor algebra $T(V) = \bigoplus_{k=0}^\infty V^{\otimes k}$ modulo the ideal $I(V)$ generated by the relation $v \cdot v - \langle v, v \rangle 1 = 0$. Then $\Cliff{n} = T(V)/I(V)$. These are finite-dimensional algebras and if $\{e_1, \dots, e_n\}$ denotes the standard basis of $\C^n$, then the following relations hold in $\Cliff{n}$:
\[
	e_i^2 = 1 \qquad \text{and} \qquad e_i e_j + e_j e_i = 0 \text{ for } i \neq j\ .
\]
The algebra $\Cliff{n}$ carries a natural grading, with respect to which the vectors $v \in V$ are odd elements. It also carries a natural involution, such that the standard basis vectors are self-adjoint. These structures turn $\Cliff{n}$ into a graded $C^*$-algebra, \cite{Kas:KK}.

By the classification of complex Clifford algebras (see for example \cite[Thm.~4.3]{LawsonMichelsohn})  there are graded isomorphisms
\begin{equation} \label{eqn:Cliff_periodicity}
	\Cliff{2n} \cong \Cliff{2} \otimes M_{2^{n-1}}(\C) \qquad \text{and} \qquad \Cliff{2n+1} \cong \Cliff{1} \otimes M_{2^{n}}(\C)
\end{equation}
where the matrix algebra $M_{2^{n-1}}(\C)$ is trivially graded.

Let $A$ be a trivially graded $C^*$-algebra.
Under the isomorphism $\Cliff{2}\otimes A \cong M_2(A)$ the grading induced by $\Cliff{2}$ corresponds the standard even grading of $M_2(A)$.

\subsubsection{The graded suspension} Another graded $C^*$-algebra that will play a crucial role in the construction of the symmetric spectrum representing $K$-theory in the next section is the algebra $\Sgr = C_0(\R)$ equipped with the grading by odd and even functions. As explained in \cite[p.~94]{Joachim_HigherCoherences} the algebra $\Sgr$ can be equipped with a coassociative and cocommutative comultiplication
\(
	\Delta \colon \Sgr \to \Sgr \otimes \Sgr
\)
and a counit $\epsilon \colon \Sgr \to \C$, where $\Sgr \otimes \Sgr$ is the graded tensor product. To fix $\Delta$ note that by \cite[\S 3]{paper:Trout} the $*$-homomorphisms $\Sgr \to A$ correspond bijectively to odd, self-adjoint, regular unbounded multipliers of $A$. If $X$ denotes the identity function of $\R$, then $\Delta$ is the $*$-homomorphism corresponding to the unbounded multiplier $1 \otimes X + X \otimes 1$. The counit is easier to define and is given by $\epsilon(f) = f(0)$.
\section{Automorphisms of $C^*$-algebras with a Clifford grading}

In this section we study the homotopy type of the groups $\Autgr(\Cliff{n} \otimes A)$, ie.\ the automorphism groups of graded $C^*$-algebras $\Cliff{n} \otimes A$, where $A$ is a trivially graded simple $C^*$-algebra and $\Cliff{n}$ carries its natural $\Z/2\Z$-grading.

The periodicity of complex Clifford algebras \eqref{eqn:Cliff_periodicity} reduces this question to understanding $\Autgr(\Cliff{1} \otimes A)$ and $\Autgr(\Cliff{2} \otimes A)$. If we denote the even and odd parts of $\Cliff{n}$ by $\Cliff{n}^{\,\rm ev}$ and $\Cliff{n}^{\,\rm odd}$, respectively, then we have a (non-canonical) isomorphism of ungraded algebras
\begin{equation} \label{eqn:Cliff_even}
	\Cliff{n}^{\,\rm ev} \cong \Cliff{n-1}\ ,
\end{equation}

\begin{lemma} \label{lem:Autgr_Cl1_short_exact}
Let $A$ be a simple and trivially graded $C^*$-algebra.
 The grading automorphism $\nu$ is the generator of $\Autgr(\Cliff{1} )\cong \Z/ 2\Z$.
The group homomorphism
\[
	\psi \colon \Autgr(\Cliff{1} )\times \Aut(A) \to \Autgr(\Cliff{1} \otimes A)
		\quad , \quad (\beta ,\alpha)\mapsto \beta \otimes \alpha.
\]
is an isomorphism of topological groups.
\end{lemma}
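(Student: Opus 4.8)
The plan is to prove Lemma~\ref{lem:Autgr_Cl1_short_exact} by exhibiting an explicit inverse to $\psi$ and showing it is continuous. First I would check injectivity of $\psi$: if $\beta \otimes \alpha = \id{}$ on $\Cliff{1} \otimes A$, then restricting to $1 \otimes A$ forces $\alpha = \id{A}$, and restricting to $\Cliff{1} \otimes 1$ forces $\beta = \id{\Cliff{1}}$ (here one uses that $\Cliff{1} \cong \C \oplus \C$ as an ungraded algebra, while the graded automorphism group $\Autgr(\Cliff{1})$ is generated by the grading automorphism $\nu\colon e_1 \mapsto -e_1$ and so is cyclic of order two). The identification $\Autgr(\Cliff{1}) \cong \Z/2\Z$ itself should be recorded first: any graded automorphism of $\Cliff{1}$ must send the unique (up to sign) odd self-adjoint unitary $e_1$ to an odd self-adjoint unitary, and the only such elements are $\pm e_1$, giving exactly two graded automorphisms.

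The main work is surjectivity, i.e.\ showing every $\varphi \in \Autgr(\Cliff{1} \otimes A)$ splits as $\beta \otimes \alpha$. I would argue as follows. Write $B = \Cliff{1} \otimes A$; as an \emph{ungraded} $C^*$-algebra $B \cong A \oplus A$ via $\Cliff{1} \cong \C \oplus \C$, with the two summands the eigenspaces of the unitary $u = e_1 \otimes 1$ (noting $u^2 = 1$, $u^* = u$, $u$ odd and central in the ungraded sense up to the sign twist — in fact $e_1$ is central in $\Cliff{1}$ so $u$ is genuinely central in the \emph{ungraded} algebra $B$). Since $A$ is simple, $A \oplus A$ has exactly two nontrivial ideals and exactly one nontrivial central projection pair, namely $\tfrac12(1 \pm u)$. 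A graded automorphism $\varphi$ commutes with the grading automorphism $\gamma = \nu \otimes \id{A}$; one computes $\gamma$ interchanges the two summands precisely when... actually $\gamma(u) = -u$, so $\gamma$ swaps the central projections $\tfrac12(1+u)$ and $\tfrac12(1-u)$. Because $\varphi$ is an ungraded automorphism of $A \oplus A$ it either fixes or swaps these two central projections; compatibility with $\gamma$ (specifically $\varphi\gamma = \gamma\varphi$) is automatic in both cases, so this does not yet pin down $\varphi(u)$. Instead I would use that $\varphi(u)$ is again an odd, self-adjoint unitary central in the ungraded algebra $B$; such elements of $B = \Cliff{1} \otimes A$ are exactly $\pm(e_1 \otimes 1) = \pm u$ — here one needs that the odd, ungraded-central self-adjoint unitaries of $\Cliff{1}\otimes A$ are precisely $\pm u$, which follows because the ungraded center of $\Cliff{1}\otimes A$ is $\Cliff{1}\otimes Z(A) = \Cliff{1}$ (as $A$ is simple, $Z(A)=\C$) whose odd self-adjoint unitaries are $\pm e_1$. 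Thus $\varphi(u) = \epsilon u$ with $\epsilon \in \{\pm 1\}$; set $\beta = \nu$ if $\epsilon = -1$ and $\beta = \id{}$ otherwise. Replacing $\varphi$ by $(\beta^{-1} \otimes \id{A}) \circ \varphi$ we may assume $\varphi(u) = u$, hence $\varphi$ preserves each central projection $\tfrac12(1 \pm u)$, hence restricts to automorphisms of each summand $A$; these two restrictions are intertwined by the isomorphism between the summands implemented by multiplication by the odd part, so they coincide, giving a single $\alpha \in \Aut(A)$ with $\varphi = \id{\Cliff{1}} \otimes \alpha$. This yields $\varphi = \psi(\beta, \alpha)$.

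Finally I would address the topology: $\psi$ is continuous because tensoring is jointly continuous for the pointwise-norm topology, and the inverse map is continuous because the assignments $\varphi \mapsto \epsilon$ (locally constant, read off from $\varphi(u)$) and $\varphi \mapsto \alpha$ (restriction of $\varphi$ to a complemented subalgebra, continuous in pointwise norm) are both continuous. Since both groups are topological groups and $\psi$ is a continuous bijective homomorphism with continuous inverse, it is an isomorphism of topological groups.

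The step I expect to be the main obstacle is the rigidity claim that the odd, ungraded-central self-adjoint unitaries of $\Cliff{1} \otimes A$ are exactly $\pm(e_1 \otimes 1)$: this is where simplicity of $A$ is genuinely used (through $Z(A) = \C$), and it is the fact that forces a graded automorphism to act on the "Clifford direction" only through $\Autgr(\Cliff{1}) \cong \Z/2\Z$. Everything else is bookkeeping with the $A \oplus A$ decomposition and continuity of the pointwise-norm topology.
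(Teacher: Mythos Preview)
Your proof is correct and follows essentially the same route as the paper: both arguments use the ungraded decomposition $\Cliff{1}\otimes A\cong A\oplus A$, invoke simplicity of $A$ to control how an automorphism can permute the two summands, and then use commutation with the grading (which swaps the summands) to force the two component automorphisms to coincide. The paper organises this slightly differently---it classifies automorphisms of $A\oplus A$ directly via the two-point primitive ideal space and then imposes $\varphi\nu_A=\nu_A\varphi$---whereas you first pin down $\varphi(e_1\otimes 1)=\pm(e_1\otimes 1)$ via the center and only afterwards use the grading constraint; the content is the same.

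Two small points to tighten. First, when $A$ is non-unital the element $u=e_1\otimes 1$ lives in the multiplier algebra, and your claim ``$Z(A)=\C$'' should be ``$Z(M(A))=\C\cdot 1$'' (which does hold for simple $A$); the paper explicitly records the identification $M(\Cliff{1}\otimes A)\cong \Cliff{1}\otimes M(A)$ for this reason. Second, the phrase ``intertwined by multiplication by the odd part'' is not quite right: multiplication by $e_1$ fixes $c^{+}$ and negates $c^{-}$, so it does not swap the summands. What swaps them is the grading automorphism $\nu\otimes\id{A}$ (equivalently the flip $(a,b)\mapsto(b,a)$), and it is commutation with this map that forces your $\alpha_1=\alpha_2$. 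With these corrections your argument goes through.
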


\begin{proof} $\Cliff{1}$ is generated by $e_1$ with $e_1^2=1$. It follows that $\Cliff{1}$ has two proper
orthogonal central projections $c^+ = \frac{1 + e_1}{2}$ and $c^- = \frac{1 - e_1}{2}.$
One has $c^+ (x+ye_1)=(x+y)c^+$ and $c^{-}(x+ye_1)=(x-y)c^{-}$ for $x,y \in \C$.
The grading automorphism $\nu$ of $\Cliff{1}$ satisfies $\nu(x+ye_1)=x-ye_1.$
We identify the multiplier algebras
	\(
		M(\Cliff{1} \otimes A) \cong \Cliff{1} \otimes M(A)
	\).
The decomposition
\[\Cliff{1} \otimes A=(c^+ \otimes 1)(\Cliff{1} \otimes A )+(c^-\otimes 1)(\Cliff{1} \otimes A)\cong A \oplus A\]
gives an isomorphism of graded $C^*$-algebras
$\theta : \Cliff{1} \otimes A \to A \oplus A$, $\theta(1\otimes a+e_1\otimes b)=(a+b,a-b)$
under which the grading automorphism $\nu \otimes \id{A}$ corresponds to the map $\nu_A$ that flips the components of direct sum of $C^*$-algebras $A\oplus A$, $\nu_A(a,b)=(b,a)$. Since $A$ is simple, the primitive ideal space $\text{Prim}(A \oplus A)$ consists of two points. Thus  any automorphism $\gamma$  of $A \oplus A$ is either of the form $(a,b) \mapsto (\alpha(a), \beta(b))$ or  $(a,b) \mapsto (\alpha(b), \beta(a))$ for some $\alpha,\beta \in \Aut(A)$. Since any graded automorphism must commute with $\nu_A$ we must have $\alpha=\beta$.
It follows that $\theta^{-1}\circ\gamma \circ\theta$ is equal to either $\id{\Cliff{1}}\otimes \alpha$ or $\nu\otimes \alpha$ for some $\alpha \in \Aut(A)$.
\end{proof}

Next we will consider the group $\Autgr(\Cliff{2} \otimes A)$ of graded automorphisms for simple trivially graded $C^*$-algebras $A$. By \eqref{eqn:Cliff_even} the even subalgebra of $\Cliff{2} \otimes A$ is isomorphic (as a trivially graded algebra) to
\[
	\Cliff{1} \otimes A \cong A \oplus A\ .
\]
In the following we will identify $\Cliff{2} \otimes A$ with $M_2(A)$ equipped with the grading, in which even elements correspond to diagonal matrices and odd elements to off-diagonal ones. To fix this isomorphism we just need to pick corresponding matrix units in $\Cliff{2}$: Let $\{e_1, e_2\}$ be the standard basis of $\R^2$ and consider the following elements of $\Cliff{2}$
\begin{align*}
	f_{11} = \frac{1 + i\,e_1e_2}{2} \quad , \quad f_{22} = \frac{1 - i\,e_1e_2}{2} \quad , \quad f_{12} = \frac{e_1- i\,e_2}{2}=f_{11}e_1 \quad , \quad f_{21} = \frac{e_1+i\,e_2}{2}=e_1f_{11} \ .
\end{align*}
Note that $f_{11}$ and $f_{22}$ are even, $f_{12}$ and $f_{21}$ are both odd and $f_{ij} \cdot f_{jk} = f_{ik}$ for all $i,j,k \in \{1,2\}$, since $e_1$ and $e_2$ anti-commute.
Hence, we can identify $\Cliff{2}$ with $M_2(\C)$ equipped with its diagonal/off-diagonal grading.

The restriction  of $\alpha\in \Autgr(\Cliff{2} \otimes A)$ to $\Cliff{2}^{\,\rm ev} \otimes A=A\oplus A$ corresponds to the restriction of the $\alpha$ to the diagonal elements. Since $A$ is simple, the primitive ideal space $\text{Prim}(A \oplus A)$ consists of two points. Thus  the restriction of $\alpha$ to $A \oplus A$ is either of the form $(a,b) \mapsto (\alpha_1(a), \alpha_2(b))$ or of the form $(a,b) \mapsto (\alpha_1(b), \alpha_2(a))$.

We identify the multiplier algebras
	\(
		M(\Cliff{2} \otimes A) \cong \Cliff{2} \otimes M(A)
	\).
	For a unitary $u\in U(M(A))$ let $\tilde{u} =
		\left(\begin{smallmatrix}
			1 &  0\\
			0 & u
		\end{smallmatrix}\right) \in  \Cliff{2} \otimes M(A)$.
		\begin{lemma} \label{lem:Autgr_Cl2_short_exact}
	Let $A$ be a simple and trivially graded $C^*$-algebra. The map
	\[\Theta: U(M(A))\times \Z/2 \times \Aut(A) \to \Autgr(\Cliff{2} \otimes A),\quad (u,x,\alpha)\mapsto \alpha = \text{Ad}_{\tilde{u}} \circ (\text{Ad}_{e_1^x} \otimes \alpha_1)\]
		is a homeomorphism. {Here, $U(M(A))$ is equipped with the strict topology.}\end{lemma}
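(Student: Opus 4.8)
The plan is to transport everything to the identification $\Cliff{2}\otimes A\cong M_2(A)$ with its diagonal/off-diagonal grading, as set up before the statement, and to check separately that $\Theta$ is a well-defined bijection and that $\Theta$ and $\Theta^{-1}$ are continuous. That $\Theta$ lands in $\Autgr(\Cliff{2}\otimes A)$ is immediate: $\tilde u=\left(\begin{smallmatrix}1&0\\0&u\end{smallmatrix}\right)$ and $e_1^{x}$ are homogeneous unitaries of $M(\Cliff{2}\otimes A)\cong\Cliff{2}\otimes M(A)$, and conjugation by a homogeneous unitary commutes with the grading automorphism, so $\Theta(u,x,\alpha)=\mathrm{Ad}_{\tilde u}\circ(\mathrm{Ad}_{e_1^{x}}\otimes\alpha)$ is a graded automorphism. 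For the converse I would first extract a normal form for an arbitrary $\beta\in\Autgr(\Cliff{2}\otimes A)$. Restricting to the even part $\Cliff{2}^{\,\mathrm{ev}}\otimes A\cong A\oplus A$ and using simplicity of $A$, exactly as in the paragraph preceding the statement (and as in Lemma~\ref{lem:Autgr_Cl1_short_exact}), $\beta$ either preserves or swaps the two primitive ideals of $A\oplus A$; call this its \emph{type} $x(\beta)\in\Z/2$. If $x(\beta)=0$ then $\beta$ fixes $f_{11}\otimes 1$ and $f_{22}\otimes 1$, so $w:=\beta(f_{12}\otimes 1)$ is an odd element of $M_2(M(A))$ with $ww^{*}=f_{11}\otimes 1$ and $w^{*}w=f_{22}\otimes 1$; being odd it is off-diagonal, hence $w=\left(\begin{smallmatrix}0&v\\0&0\end{smallmatrix}\right)$ for a unitary $v\in U(M(A))$, and setting $u=v^{*}$ one checks that $\mathrm{Ad}_{\tilde u}^{-1}\circ\beta$ fixes $f_{11}\otimes 1$, $f_{12}\otimes 1$ and $f_{22}\otimes 1$, hence all matrix units of $\Cliff{2}$, and therefore equals $\id{\Cliff{2}}\otimes\alpha$ for a unique $\alpha\in\Aut(A)$. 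Thus $\beta=\Theta(v^{*},0,\alpha)$, which gives surjectivity on the type-$0$ part; a type-$1$ automorphism $\beta$ is reduced to this case by noting that $\beta\circ(\mathrm{Ad}_{e_1}\otimes\id{A})$ has type $0$, say equal to $\Theta(u,0,\alpha)$, whence $\beta=\Theta(u,1,\alpha)$. Injectivity is then routine: $x$ is read off from the action on $\mathrm{Prim}(A\oplus A)$, evaluating $\Theta(u,x,\alpha)$ on the appropriate diagonal corner $(f_{jj}\otimes 1)M_2(A)(f_{jj}\otimes 1)\cong A$ recovers $\alpha$, and with $x$ and $\alpha$ fixed, $\mathrm{Ad}_{\tilde u}=\mathrm{Ad}_{\tilde u'}$ on $M_2(A)$ forces $(\tilde u')^{*}\tilde u\in Z(M_2(M(A)))=Z(M(A))\cdot I_2$, i.e. $u=u'$.

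Continuity of $\Theta$ is easy: $\Z/2$ is discrete, and for fixed $x$ the map $(u,\alpha)\mapsto\mathrm{Ad}_{\tilde u}\circ(\mathrm{Ad}_{e_1^{x}}\otimes\alpha)$ is continuous from $U(M(A))_{\mathrm{strict}}\times\Aut(A)$ to $\Autgr(\Cliff{2}\otimes A)$ in the point-norm topologies, by the estimate $\|\tilde u S\tilde u^{*}-\tilde u_0 T\tilde u_0^{*}\|\le\|S-T\|+\|\tilde u T\tilde u^{*}-\tilde u_0 T\tilde u_0^{*}\|$ together with the strict continuity of $u\mapsto\tilde u T\tilde u^{*}$, which is visible entrywise. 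The substantive part is continuity of $\Theta^{-1}$. Suppose $\beta_i\to\beta$ point-norm, with $\Theta^{-1}(\beta_i)=(u_i,x_i,\alpha_i)$ and $\Theta^{-1}(\beta)=(u,x,\alpha)$. First, the type is eventually constant: for a fixed $0\neq a\in A$ with $\|a\|=1$, the image of $f_{11}\otimes a$ under a type-$0$ (resp. type-$1$) automorphism lies in the corner $(f_{11}\otimes 1)M_2(A)(f_{11}\otimes 1)$ (resp. in the complementary corner), and these corners are at distance $\ge 1$ on norm-one elements, so $\beta_i(f_{11}\otimes a)\to\beta(f_{11}\otimes a)$ forces $x_i=x$ eventually; we may then assume all types are $0$. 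Extracting entries from $\beta_i(f_{11}\otimes a)\to\beta(f_{11}\otimes a)$ and $\beta_i(f_{22}\otimes a)\to\beta(f_{22}\otimes a)$ gives $\alpha_i(a)\to\alpha(a)$ (so $\alpha_i\to\alpha$) and $\beta_{i,2}(a)\to\beta_2(a)$ in norm, where $\beta_{i,2}$ is the second-component restriction and $\alpha,\beta_2$ are automorphisms of $A$. From the normal form $u_i=v_i^{*}$ with $\beta_i(f_{12}\otimes 1)=\left(\begin{smallmatrix}0&v_i\\0&0\end{smallmatrix}\right)$, and the $(1,2)$-entry of $\beta_i(f_{12}\otimes a)$ equals both $\alpha_i(a)v_i$ and $v_i\beta_{i,2}(a)$, by decomposing $f_{12}\otimes a$ as $(f_{11}\otimes a)(f_{12}\otimes 1)$ resp. as $(f_{12}\otimes 1)(f_{22}\otimes a)$. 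Hence $\alpha_i(a)v_i\to\alpha(a)v$ and $v_i\beta_{i,2}(a)\to v\beta_2(a)$ in norm; cancelling the norm-convergent factors $\alpha_i(a)$ resp. $\beta_{i,2}(a)$ against the bounded unitaries $v_i$, and using surjectivity of $\alpha$ and $\beta_2$, this yields $bv_i\to bv$ and $v_ic\to vc$ for all $b,c\in A$, i.e. $v_i\to v$ in the strict topology, equivalently $u_i\to u$ strictly. Therefore $\Theta^{-1}$ is continuous.

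I expect the continuity of $\Theta^{-1}$ to be the only genuine obstacle, and within it the two bookkeeping points --- the eventual constancy of the type along a convergent net, and the passage from the one-sided limits of $\alpha_i(a)v_i$ and $v_i\beta_{i,2}(a)$ to bona fide strict convergence of the unitaries --- to need the most care, especially when $A$ is non-unital, where $f_{ij}\otimes 1$ lives only in the multiplier algebra and one must argue throughout with honest elements $f_{jk}\otimes a\in M_2(A)$ as above. The algebraic content (well-definedness and bijectivity) is in effect forced by simplicity of $A$ and parallels Lemma~\ref{lem:Autgr_Cl1_short_exact}.
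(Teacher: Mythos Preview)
Your argument is correct and follows essentially the same strategy as the paper: both pass to the $M_2(A)$ picture, extract a unitary in $U(M(A))$ from the image of an odd multiplier element (you use $f_{12}\otimes 1$, the paper uses $e_1\otimes 1 = f_{12}\otimes 1 + f_{21}\otimes 1$, yielding the same $u$), and read the type from the action on the even subalgebra $A\oplus A$; the only difference is that the paper extracts $u$ first and then determines the type, whereas you reverse the order. Your careful treatment of the continuity of $\Theta^{-1}$---eventual constancy of the type and the passage to strict convergence of $u_i$ via $\beta_i(f_{12}\otimes a)$---fills in what the paper simply asserts as ``immediately verified.''
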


\begin{proof}
	Any automorphism $\alpha \in \Autgr(\Cliff{2} \otimes A)$ extends to a unital graded automorphism of the multiplier algebra
	\(
		M(\Cliff{2} \otimes A) \cong \Cliff{2} \otimes M(A)
	\). We will identify $\Cliff{2}$ with $M_2(\C)$ equipped with its diagonal/off-diagonal grading. The element $\alpha(s) \in \Cliff{2} \otimes M(A)$ for $s = e_1 \otimes 1$ is odd, self-adjoint and satisfies $\alpha(s)^2 = 1$. Hence, it is of the form
	\(
		\alpha(s) =
		\left(\begin{smallmatrix}
			0 &  u^*\\
			u & 0
		\end{smallmatrix}\right) 	
	\)
	for a unique unitary $u \in U(M(A))$.
	Let $\tilde{u} =
		\left(\begin{smallmatrix}
			1 &  0\\
			0 & u
		\end{smallmatrix}\right) $. Then $\tilde{\alpha}:=\text{Ad}_{\tilde{u}^*}\circ \alpha\in \Autgr(\Cliff{2} \otimes A)$  fixes the element $s$.
	
	 Under the isomorphism $\Cliff{2} \cong M_2(\C)$, the element $e_1=f_{12}+f_{21}$ corresponds to $\left(\begin{smallmatrix} 0 & 1 \\ 1 & 0 \end{smallmatrix}\right)$. The restriction of $\tilde{\alpha} \in \Autgr(\Cliff{2} \otimes A)$ to $A \oplus A$ is either of the form $(a,b) \mapsto (\alpha_1(a), \alpha_2(b))$ or of the form $(a,b) \mapsto (\alpha_1(b), \alpha_2(a))$ for some $\alpha_i\in \Aut(A)$. In the first case we have
	\[
		\begin{pmatrix}
			\alpha_1(b) & 0 \\
			0 & \alpha_2(a)	
		\end{pmatrix} = 	
		\tilde{\alpha} \left(
		\begin{pmatrix}
			0 & 1 \\
			1 & 0	
		\end{pmatrix}
		\begin{pmatrix}
			a & 0 \\
			0 & b	
		\end{pmatrix}
		\begin{pmatrix}
			0 & 1 \\
			1 & 0	
		\end{pmatrix}
		\right) =
		\begin{pmatrix}
			0 & 1 \\
			1 & 0
		\end{pmatrix} 		
		\begin{pmatrix}
			\alpha_1(a) & 0 \\
			0 & \alpha_2(b)	
		\end{pmatrix}
		\begin{pmatrix}
			0 & 1 \\
			1 & 0
		\end{pmatrix}		
	\]
	which shows that $\alpha_1= \alpha_2$.	Since
	\[
		\tilde{\alpha} \left(
		\begin{pmatrix}
			0 & c \\
			d & 0	
		\end{pmatrix}
		\right)  = 	
		\tilde{\alpha} \left(
		\begin{pmatrix}
			c & 0 \\
			0 & d	
		\end{pmatrix}
		\begin{pmatrix}
			0 & 1 \\
			1 & 0	
		\end{pmatrix}
		\right) =	
		\begin{pmatrix}
			\alpha_1(c) & 0 \\
			0 & \alpha_1(d)	
		\end{pmatrix}
		\begin{pmatrix}
			0 & 1 \\
			1 & 0
		\end{pmatrix}	
		=
	\begin{pmatrix}
			0 & \alpha_1(a) \\
			\alpha_1(b) & 0	
		\end{pmatrix}				
	 \]
we see that $\tilde{\alpha}=\id{\Cliff{2}}\otimes \alpha_1$.

	 Similar arguments show that in the second case we also have $\alpha_1=\alpha_2$ and that
	 \[\tilde{\alpha}=
	 \text{Ad}_{\left(\begin{smallmatrix}
			0 & 1 \\
			1 & 0
		\end{smallmatrix}\right)} \circ (\id{\Cliff{2}}\otimes \alpha_1) =\text{Ad}_{e_1} \otimes \alpha_1.
	\]
	Since  $\alpha=\text{Ad}_{\tilde{u}}\circ \tilde{\alpha}$,
	we obtain that the map $\Theta$ from the statement
		is a bijection. The continuity properties of $\Theta$ and of its inverse are immediately verified.	\end{proof}

\begin{lemma} \label{lem:Autgr_Cl2_equiv}
Let $A$ be a simple, stable and trivially graded $C^*$-algebra. The group homomorphism
\[
\Z/2 \times \Aut(A) \to \Autgr(\Cliff{2} \otimes A)\quad, \qquad (x,\alpha) \mapsto {\rm Ad}_{e_1^x} \otimes \alpha
\]
is a homotopy equivalence.
\end{lemma}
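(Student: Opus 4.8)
The plan is to show that the homomorphism
\[
	\iota \colon \Z/2 \times \Aut(A) \to \Autgr(\Cliff{2} \otimes A), \qquad (x,\alpha)\mapsto \mathrm{Ad}_{e_1^x}\otimes \alpha
\]
is a homotopy equivalence by comparing it with the homeomorphism $\Theta$ from Lemma~\ref{lem:Autgr_Cl2_short_exact}. Under $\Theta$, the target is homeomorphic to $U(M(A)) \times \Z/2 \times \Aut(A)$, and $\iota$ corresponds to the inclusion $\{1\} \times \Z/2 \times \Aut(A) \hookrightarrow U(M(A)) \times \Z/2 \times \Aut(A)$ of the subspace where the unitary multiplier is the identity. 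Since the $\Z/2$ and $\Aut(A)$ factors match on both sides, it suffices to prove that the inclusion $\{1\} \hookrightarrow U(M(A))$ is a homotopy equivalence, i.e.\ that $U(M(A))$ is contractible in the strict topology. This is exactly where the stability hypothesis $A \otimes \K \cong A$ enters.

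**The key input.** For a stable $C^*$-algebra $A$, the unitary group $U(M(A))$ of its multiplier algebra is contractible in the strict topology. I would cite this directly: it is a classical fact essentially due to Cuntz--Higson / Mingo (see also Schr\"oder's book on $K$-theory, or the discussion in \cite{DP2}), obtained via an Eilenberg swindle argument --- one writes $A \cong A \otimes \K$ and uses an infinite repeat of a given unitary together with a rotation homotopy to connect it to the identity, all continuously in the strict topology and compatibly with the identity at the basepoint. Since the paper already works throughout with $U(M(A))$ in the strict topology and with stable algebras, this fact should be available from the cited literature; alternatively one notes that $U(M(A))$ is strictly homotopy equivalent to $U(M(A\otimes\K))\simeq *$. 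Strict contractibility gives not just that $U(M(A))$ is connected but that all its homotopy groups vanish, hence the inclusion of a point is a weak equivalence; since these are nice (well-pointed, metrizable-type) topological groups, it is in fact a homotopy equivalence.

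**Assembling the argument.** Concretely: Lemma~\ref{lem:Autgr_Cl2_short_exact} gives a homeomorphism $\Theta\colon U(M(A)) \times \Z/2 \times \Aut(A) \xrightarrow{\ \cong\ } \Autgr(\Cliff{2}\otimes A)$, $(u,x,\alpha)\mapsto \mathrm{Ad}_{\tilde u}\circ(\mathrm{Ad}_{e_1^x}\otimes\alpha)$. Composing the inclusion $j\colon \{1\}\times\Z/2\times\Aut(A)\hookrightarrow U(M(A))\times\Z/2\times\Aut(A)$ with $\Theta$ recovers $\iota$, because $\tilde 1 = \mathrm{id}$. As $U(M(A))$ is strictly contractible, $j$ is a homotopy equivalence (the projection off the contractible factor is a homotopy inverse), and therefore so is $\iota=\Theta\circ j$. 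One should remark that $\iota$ is genuinely a group homomorphism (not merely continuous), which is clear since $\mathrm{Ad}_{e_1^x}\otimes\alpha$ multiplies componentwise in $x$ and $\alpha$; this is what makes the statement about groups meaningful, though the homotopy equivalence is only claimed as a map of spaces.

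**Main obstacle.** The only non-formal point is the strict contractibility of $U(M(A))$ for stable $A$; everything else is bookkeeping through the homeomorphism $\Theta$. I expect the cleanest route is to invoke this as a known result (it is used implicitly already in the ungraded theory of \cite{DP2}), but if a self-contained proof is wanted, one must be careful that the Eilenberg-swindle homotopy is continuous for the \emph{strict} topology on $U(M(A))$ --- the rotation homotopies $t\mapsto u\oplus u \oplus \cdots$ with a $2\times 2$ rotation block do converge strictly, and fixing the basepoint $1\mapsto 1$ is automatic, so this is routine but does require the explicit model rather than a soft argument. A secondary, very minor, point is to confirm that the two $\Aut(A)$ and $\Z/2$ coordinates really do correspond under $\Theta$ to the ones in the source of $\iota$, which is immediate from the formula for $\Theta$.
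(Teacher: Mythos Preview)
Your proposal is correct and follows exactly the same approach as the paper: invoke Lemma~\ref{lem:Autgr_Cl2_short_exact} to identify $\Autgr(\Cliff{2}\otimes A)$ with $U(M(A))\times\Z/2\times\Aut(A)$, then use the strict contractibility of $U(M(A))$ for stable $A$. The paper's proof is the one-line version of what you wrote; your elaboration on why $\iota=\Theta\circ j$ and on the contractibility input is accurate and matches the intended argument.
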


\begin{proof}
In view of Lemma~\ref{lem:Autgr_Cl2_short_exact}, the result follows from the contractibility of $U(M(A))$
 in the strict topology. 
\end{proof}

\begin{lemma} \label{lem:homotopy_stable_alg}
	Let $A$ be a stable $C^*$-algebra and let $n \in \N$. The group homomorphisms
	\begin{align*}
		\Aut(A) \to \Aut(\K \otimes A) \quad &, \quad \alpha \mapsto \id{\K} \otimes \alpha \ ,\\
		\Aut(A) \to \Aut(M_n(\C) \otimes A) \quad &,\quad \alpha \mapsto \id{M_n(\C)} \otimes \alpha\ .		
	\end{align*}
	are homotopy equivalences.
\end{lemma}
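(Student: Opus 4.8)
The plan is to prove the statement for the functor $-\otimes\K$ first, for an \emph{arbitrary} stable $C^*$-algebra, and then to deduce the $M_n(\C)$-version from it. For the deduction, observe that $M_n(\C)\otimes A$ is again stable, since $(M_n(\C)\otimes A)\otimes\K\cong M_n(\C)\otimes(A\otimes\K)\cong M_n(\C)\otimes A$. Hence, once the $\K$-statement is established in general, it applies both to $A$ and to $M_n(\C)\otimes A$, and in the commuting triangle of continuous group homomorphisms
\[
\Aut(A)\xrightarrow{\ \alpha\,\mapsto\,\id{M_n(\C)}\otimes\alpha\ }\Aut(M_n(\C)\otimes A)\xrightarrow{\ \beta\,\mapsto\,\id{\K}\otimes\beta\ }\Aut(\K\otimes M_n(\C)\otimes A)
\]
the second map is a homotopy equivalence (the $\K$-statement applied to the stable algebra $M_n(\C)\otimes A$), while the composite is $\alpha\mapsto\id{\K\otimes M_n(\C)}\otimes\alpha$, which after fixing an isomorphism $\K\otimes M_n(\C)\cong\K$ becomes the map $\alpha\mapsto\id{\K}\otimes\alpha$ and is therefore a homotopy equivalence as well (the $\K$-statement for $A$). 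By the $2$-out-of-$3$ property for homotopy equivalences (if $g$ and $g\circ f$ are homotopy equivalences, so is $f$), the first map is a homotopy equivalence. So everything reduces to the $\K$-case.

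\textbf{Setting up the $\K$-case.} Let $A$ be stable and consider $\iota\colon\Aut(A)\to\Aut(\K\otimes A)$, $\alpha\mapsto\id{\K}\otimes\alpha$. Since $A\cong\K\otimes A$, fix an isomorphism $\varphi\colon\K\otimes A\xrightarrow{\ \cong\ }A$; then conjugation by $\varphi$ is an isomorphism of topological groups $\mathrm{Ad}_\varphi\colon\Aut(\K\otimes A)\xrightarrow{\ \cong\ }\Aut(A)$, in particular a homotopy equivalence. Consequently $\iota$ is a homotopy equivalence, with homotopy inverse $\mathrm{Ad}_\varphi$, precisely when the self-map $\Phi\colon\Aut(A)\to\Aut(A)$ given by $\Phi(\alpha)=\mathrm{Ad}_\varphi\circ\iota(\alpha)=\varphi\circ(\id{\K}\otimes\alpha)\circ\varphi^{-1}$ is homotopic to $\id{\Aut(A)}$.

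\textbf{The homotopy $\Phi\simeq\id{\Aut(A)}$.} This is the heart of the matter and is the standard argument showing that stabilisation induces a homotopy equivalence on automorphism groups of (separable) stable $C^*$-algebras. The stability of $\K\otimes A$ provides a sequence of isometries $(v_k)_{k\geq1}$ in $M(\K\otimes A)$ with $v_k^\ast v_l=\delta_{kl}\,1$ and $\sum_k v_kv_k^\ast=1$ in the strict topology; under $\varphi$ the ampliation $\id{\K}\otimes\alpha$ thus becomes conjugate to an infinite repeat of $\alpha$. One then runs an absorption (``Eilenberg swindle'') argument: using norm-continuous paths of unitaries in $M(\K\otimes A)$ to rotate and shift the ranges of the $v_k$, one connects $\Phi(\alpha)$ through automorphisms of $A$ to $\id{\Aut(A)}$. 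The existence of the required unitary paths rests on the contractibility of $\mathcal{U}(M(\K\otimes A))$ in the strict topology --- exactly the input that drives the proof of Lemma~\ref{lem:Autgr_Cl2_equiv}. Since all isometries and rotation unitaries can be chosen once and for all, independently of $\alpha$, the resulting family $(\alpha,t)\mapsto\Phi_t(\alpha)$ is jointly continuous with $\Phi_0=\Phi$ and $\Phi_1=\id{\Aut(A)}$. The main obstacle is the bookkeeping of this swindle: one must ensure that every intermediate stage is a genuine automorphism of $A$ and that the whole family depends continuously (and naturally) on $\alpha$. Neither point is deep, but both require care, and --- as in Lemma~\ref{lem:Autgr_Cl2_equiv} --- it is the strict-topology contractibility of the unitary group of the multiplier algebra that makes the construction go through.
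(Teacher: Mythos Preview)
Your reduction of the $M_n(\C)$-case to the $\K$-case via the $2$-out-of-$3$ property is exactly what the paper does, so that part is fine.

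The gap is in the $\K$-case itself. You reduce correctly to showing that the self-map $\Phi(\alpha)=\varphi\circ(\id{\K}\otimes\alpha)\circ\varphi^{-1}$ of $\Aut(A)$ is homotopic to the identity, but then the ``swindle'' sketch does not actually produce that homotopy. An Eilenberg swindle lets you pass between $\alpha^{\oplus\infty}$ and $\alpha\oplus\alpha^{\oplus\infty}$; it does not, on its own, let you collapse an infinite repeat of $\alpha$ back down to a single copy of~$\alpha$, which is what you need here. Invoking the contractibility of $U(M(\K\otimes A))$ does not bridge this either: the discrepancy between $\id{\K}\otimes\alpha$ and $\varphi^{-1}\alpha\varphi$ is not given by conjugation by a unitary that is independent of $\alpha$, so strict contractibility of the unitary group is not the right tool. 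You essentially acknowledge this (``the main obstacle is the bookkeeping''), but the bookkeeping is the whole proof.

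The paper's argument avoids this entirely by quoting a concrete analytic input, namely \cite[Lem.~2.4]{DP1}: there exists a continuous path $\psi\colon[0,1]\to\hom(\K,\K\otimes\K)$ with $\psi_0(T)=e\otimes T$ (a corner embedding) and $\psi_t$ an isomorphism for all $t>0$. One then writes down the explicit homotopy
\[
H(\alpha,t)=\begin{cases}\alpha & t=0,\\ (\psi_t^{-1}\otimes\id{A})\circ(\id{\K}\otimes\alpha)\circ(\psi_t\otimes\id{A}) & t>0,\end{cases}
\]
which is visibly continuous and lands in $\Aut(\K\otimes A)$ for $t>0$; at $t=0$ it reduces to $\alpha$ because $\id{\K}\otimes\alpha$ restricted to the corner $e\otimes A$ is just $\alpha$. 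This path $\psi_t$ is exactly the missing ingredient that lets you interpolate between ``one copy'' and ``infinitely many copies'' of $\alpha$ in a way that is uniform in~$\alpha$; your swindle outline does not supply an analogue of it.
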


\begin{proof}
	Let $M_n = M_n(\C)$. Using an isomorphism $A \cong \K \otimes A$ we see that it suffices to show that
	\begin{align*}
		\Aut(\K \otimes A) \to \Aut(\K \otimes \K \otimes A) \quad &, \quad \alpha \mapsto \id{\K} \otimes \alpha \ ,\\
		\Aut(\K \otimes A) \to \Aut(M_n \otimes \K \otimes A) \quad &,\quad \alpha \mapsto \id{M_n} \otimes \alpha
	\end{align*}
	are homotopy equivalences. In fact, if the first map induces a homotopy equivalence, then so does the second. To see this note that the composition
	\[
		\begin{tikzcd}[column sep=2.1cm]
			\Aut(\K \otimes A) \ar[r, "\alpha \mapsto \id{M_n} \otimes \alpha" ] & \Aut(M_n \otimes \K \otimes A) \ar[r,"\beta \mapsto \id{\K} \otimes \beta"] & \Aut(\K \otimes M_n \otimes \K \otimes A)
		\end{tikzcd}
	\]
	maps $\alpha$ to $\id{\K \otimes M_n} \otimes \alpha$. If $\alpha \mapsto \id{\K} \otimes \alpha$ is a homotopy equivalence, then so is $\alpha \mapsto \id{\K \otimes M_n} \otimes \alpha$ (because $\K \otimes M_n \cong \K$) and therefore also $\alpha \mapsto \id{M_n} \otimes \alpha$.
	
	Let $e \in \K$ be a minimal projection. 	By \cite[Lemma~2.4]{DP1} there exists a continuous map $\psi \colon [0,1] \to \text{Hom}(\K,\K \otimes \K)$ with the property that $\psi(0)(T) = e \otimes T$ for all $T \in \K$ and $\psi(t)$ is an isomorphism for all $t \in (0,1]$ (see also the proof of \cite[Thm.~2.5]{DP1}). The homotopy inverse of $\alpha \mapsto \id{\K} \otimes \alpha$ is given by $\beta \mapsto (\psi_1^{-1} \otimes \id{A}) \circ \beta \circ (\psi_1 \otimes \id{A})$. The homotopy
	\[
		H \colon \Aut(\K \otimes A) \times I \to \Aut(\K \otimes A) \  , \  (\alpha,t) \mapsto
		\begin{cases}
			\alpha & \text{if } t = 0, \\
			(\psi_t^{-1} \otimes \id{A}) \circ (\id{\K} \otimes \alpha) \circ (\psi_t \otimes \id{A}) & \text{else.}
		\end{cases}
	\]
	shows that $(\psi_1^{-1} \otimes \id{A}) \circ (\id{\K} \otimes \alpha) \circ (\psi_1 \otimes \id{A})$ is homotopic to the identity map. The proof that the other composition of the two maps is also homotopic to the identity can be reduced to this argument and is therefore omitted.
\end{proof}

\begin{proposition} \label{lem:Cliff_hom_eq}
	Let $A$ be a simple, stable and trivially graded $C^*$-algebra. For every $n \in \N$ the group homomorphism
	\[
		\Psi_n \colon \Autgr(\Cliff{n} \otimes A) \to \Autgr(\Cliff{n+1} \otimes A)\ ,
	\]
	induced by $\Autgr(\Cliff{n}\otimes A) \to \Autgr(\Cliff{1} \otimes \Cliff{n}\otimes A)$, $\alpha \mapsto \id{} \otimes \alpha$ and the isomorphism $\Cliff{1} \otimes \Cliff{n} \cong \Cliff{n+1}$, is a homotopy equivalence.
\end{proposition}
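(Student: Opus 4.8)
The plan is to reduce, via the periodicity isomorphisms \eqref{eqn:Cliff_periodicity}, to the two cases $n$ odd and $n$ even, and in each of these to recognise $\Psi_n$ — after precomposing with the homotopy equivalences of the previous lemmas — as an explicit homotopy equivalence. Fix $m\ge 1$ and set $B=M_{2^{m-1}}(\C)\otimes A$, which is again simple, stable and trivially graded. For $n=2m-1$ the graded isomorphisms of \eqref{eqn:Cliff_periodicity} identify $\Cliff{n}\otimes A\cong\Cliff{1}\otimes B$ and $\Cliff{n+1}\otimes A\cong\Cliff{2}\otimes B$, and under these $\Psi_n$ becomes the map
\[
	\Phi_1\colon\Autgr(\Cliff{1}\otimes B)\to\Autgr(\Cliff{2}\otimes B),\qquad\beta\longmapsto\id{\Cliff{1}}\otimes\beta,
\]
followed by the reidentification induced by $\Cliff{1}\otimes\Cliff{1}\cong\Cliff{2}$. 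For $n=2m$ one gets $\Cliff{n}\otimes A\cong\Cliff{2}\otimes B$ and $\Cliff{n+1}\otimes A\cong\Cliff{1}\otimes M_2(\C)\otimes B$, and $\Psi_n$ becomes
\[
	\Phi_2\colon\Autgr(\Cliff{2}\otimes B)\to\Autgr(\Cliff{3}\otimes B)\cong\Autgr(\Cliff{1}\otimes M_2(\C)\otimes B),\qquad\alpha\longmapsto\id{\Cliff{1}}\otimes\alpha,
\]
followed by the reidentifications $\Cliff{1}\otimes\Cliff{2}\cong\Cliff{3}\cong\Cliff{1}\otimes M_2(\C)$. (Two choices of the reidentifying isomorphism differ by an automorphism of the target, which changes $\Phi_i$ only by a homeomorphism, so the choice is immaterial for our purposes.)

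For the odd case, recall from Lemma~\ref{lem:Autgr_Cl1_short_exact} that $\psi\colon\Z/2\times\Aut(B)\to\Autgr(\Cliff{1}\otimes B)$, $(\delta,\gamma)\mapsto\nu^\delta\otimes\gamma$, is an isomorphism of topological groups. The only computation needed is that, under $\Cliff{1}\otimes\Cliff{1}\cong\Cliff{2}$, the automorphism $\id{\Cliff{1}}\otimes\nu$ coincides with $\mathrm{Ad}_{e_1}$: both $\id{\Cliff{1}}\otimes\nu$ and $\mathrm{Ad}_{e_1}$ fix $e_1\otimes 1$ and send $1\otimes e_1$ to $-(1\otimes e_1)$, so they agree on generators. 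Hence $\Phi_1\circ\psi$ sends $(\delta,\gamma)$ to $\mathrm{Ad}_{e_1^{\delta}}\otimes\gamma$, which is exactly the homotopy equivalence of Lemma~\ref{lem:Autgr_Cl2_equiv}. Therefore $\Phi_1=(\text{Lem.~\ref{lem:Autgr_Cl2_equiv}})\circ\psi^{-1}$ is a homotopy equivalence, and so is $\Psi_n$.

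For the even case I would precompose $\Phi_2$ with the homotopy equivalence $\iota\colon\Z/2\times\Aut(B)\to\Autgr(\Cliff{2}\otimes B)$, $(x,\gamma)\mapsto\mathrm{Ad}_{e_1^{x}}\otimes\gamma$, of Lemma~\ref{lem:Autgr_Cl2_equiv}, so that it suffices to show $\Phi_2\circ\iota$ is a homotopy equivalence. Now $\Phi_2\circ\iota$ sends $(x,\gamma)$ to $\beta^{x}\otimes\gamma$, where $\beta=\id{\Cliff{1}}\otimes\mathrm{Ad}_{e_1}\in\Autgr(\Cliff{1}\otimes\Cliff{2})=\Autgr(\Cliff{3})$. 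The crucial observation is that $\beta$ sends the central volume element $\omega=e_1e_2e_3$ of $\Cliff{3}$ to $-\omega$, hence interchanges the two minimal central projections $\tfrac{1\pm i\omega}{2}$; since under Lemma~\ref{lem:Autgr_Cl1_short_exact} the $\Z/2$-factor of $\Autgr(\Cliff{3})=\Autgr(\Cliff{1}\otimes M_2(\C))\cong\Z/2\times\Aut(M_2(\C))$ is precisely this interchange, we obtain $\beta=\nu\otimes\rho$ for some $\rho\in\Aut(M_2(\C))$. Applying Lemma~\ref{lem:Autgr_Cl1_short_exact} also to the target $\Autgr(\Cliff{1}\otimes M_2(\C)\otimes B)\cong\Z/2\times\Aut(M_2(\C)\otimes B)$, the map $\Phi_2$ becomes $(x,\gamma)\mapsto(x,\rho^{x}\otimes\gamma)$ with $\rho^{0}=\id{M_2(\C)}$. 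For each $x$, the map $\gamma\mapsto\rho^{x}\otimes\gamma$ equals $(\rho^{x}\otimes\id{B})\circ(\id{M_2(\C)}\otimes\gamma)$, a homeomorphism composed with the homotopy equivalence of Lemma~\ref{lem:homotopy_stable_alg}, hence a homotopy equivalence; therefore $(x,\gamma)\mapsto(x,\rho^{x}\otimes\gamma)$ is a homotopy equivalence, and so are $\Phi_2$ and $\Psi_n$. Together with the odd case this proves the proposition for all $n\in\N$.

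The main obstacle is the bookkeeping in the even case, where the nontrivial component of $\pi_0$ must be tracked carefully: one must resist identifying $\id{\Cliff{1}}\otimes\mathrm{Ad}_{e_1}$ with conjugation by the element $1\otimes e_1$ inside the graded tensor product $\Cliff{1}\otimes\Cliff{2}$. These two automorphisms differ by $\nu\otimes\id$ — conjugation by an odd unitary acquires a sign on odd elements — and it is exactly this discrepancy that makes $\Psi_n$ hit the nontrivial path component rather than landing in the identity component.
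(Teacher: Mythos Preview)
Your proof is correct. The odd case is essentially identical to the paper's treatment of $n=1$: both use the isomorphism of Lemma~\ref{lem:Autgr_Cl1_short_exact}, the key identity $\id{\Cliff{1}}\otimes\nu=\mathrm{Ad}_{e_1}$ in $\Cliff{2}$, and then Lemma~\ref{lem:Autgr_Cl2_equiv}.

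The even case, however, is handled differently. The paper does not analyse $\Psi_2$ directly; instead it shows that the composite $\Psi_2\circ\Psi_1\colon\Autgr(\Cliff{1}\otimes A)\to\Autgr(\Cliff{3}\otimes A)$ is a homotopy equivalence (which suffices once $\Psi_1$ is known to be one). It does this by cyclically permuting the three $\Cliff{1}$ factors so that the ``live'' factor sits in front, and then exploiting that the grading on $\Cliff{2}\cong M_2(\C)$ is inner to pass to a trivially graded tensor factor, reducing to Lemmas~\ref{lem:Autgr_Cl2_equiv} and~\ref{lem:homotopy_stable_alg}. Your approach is more direct: you precompose $\Phi_2$ with the Lemma~\ref{lem:Autgr_Cl2_equiv} equivalence and then track the image of the nontrivial $\pi_0$-class by computing the action of $\id{\Cliff{1}}\otimes\mathrm{Ad}_{e_1}$ on the central volume element $\omega\in\Cliff{3}$, seeing explicitly that it swaps the two minimal central projections and hence lands in the $\nu$-component under Lemma~\ref{lem:Autgr_Cl1_short_exact}. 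This makes the $\pi_0$ bookkeeping completely transparent, at the cost of a small explicit computation; the paper's permutation-and-ungrading argument avoids that computation but is slightly more structural and less illuminating about why the nontrivial component is hit. Your closing remark about the discrepancy between $\id{\Cliff{1}}\otimes\mathrm{Ad}_{e_1}$ and graded conjugation by $1\otimes e_1$ is a genuinely helpful warning that the paper leaves implicit.
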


\begin{proof}
	By the periodicity of Clifford algebras it suffices to show the statement for $n = 1$ and $n = 2$. In the first case consider the diagram
	\[
		\begin{tikzcd}
			\Autgr(\Cliff{1} \otimes A) \ar[r,"\Psi_1"] & \Autgr(\Cliff{2} \otimes A) \\
		\Z/2 \times	\Aut(A) \ar[u,"\cong"] \ar[ur] &
		\end{tikzcd}
	\]
	where the vertical arrow is the isomorphism from Lemma~\ref{lem:Autgr_Cl1_short_exact} and the diagonal arrow is given by $(x,\alpha) \mapsto \text{Ad}_{e_1^x} \otimes \alpha$, the homomorphism from Lemma~\ref{lem:Autgr_Cl2_equiv}. To see that this diagram commutes, we observe that if $\nu$ is the grading automorphism of $\Cliff{1}$, then $\id{\Cliff{1}}\otimes \nu \in \Autgr(\Cliff{1}\otimes \Cliff{1})\cong \Autgr(\Cliff{2})$  is implemented in $\Cliff{2}$ by conjugation by $e_1$. So this case follows from Lemma~\ref{lem:Autgr_Cl2_equiv}.
	
To handle the case $n =2$ it suffices to show that the homomorphism \[\Psi_2\circ \Psi_1\colon \Aut(\Cliff{1})\times \Aut(A)\cong  \Autgr(\Cliff{1}\otimes A) \to \Autgr(\Cliff{1} \otimes\Cliff{1} \otimes \Cliff{1} \otimes A),\]
$(\beta,\alpha) \to
\id{\Cliff{1}} \otimes\id{\Cliff{1}}\otimes \beta\otimes \alpha$ is a homotopy equivalence.
Let $\sigma$ be the automorphism of $\Cliff{1} \otimes\Cliff{1} \otimes \Cliff{1} \otimes A$ that permutes cyclically the first three tensor factors and acts as identity on $A$. Since conjugation by $\sigma$ is a homeomorphism of $ \Autgr(\Cliff{1} \otimes\Cliff{1} \otimes \Cliff{1} \otimes A)$ it suffices to show that the map
$\psi(\beta,\alpha)=\beta\otimes \id{\Cliff{1}} \otimes\id{\Cliff{1}}\otimes \alpha$ is a homotopy equivalence.
The grading automorphism  of $B:=\Cliff{1} \otimes \Cliff{1}\cong M_2(\C)$ is inner,  implemented by the unitary $v=\left(\begin{smallmatrix}
			1 &  0\\
			0 & -1
		\end{smallmatrix}\right) $.
 It follows that the graded tensor product $\Cliff{1}\otimes B$ is isomorphic to $\Cliff{1}\otimes \overline{B}$ where $\overline{B}$ stands for the $C^*$-algebra $B$ with the trivial grading. The isomorphism $\theta:\Cliff{1}\otimes B \to \Cliff{1}\otimes \overline{B}$ is given by $  x\otimes b \mapsto x\otimes vb$.
  Next we observe that the homomorphism $\psi'(-):=(\theta\otimes \id{A})\circ\psi(-)\circ(\theta\otimes \id{A})^{-1} \colon \Aut(\Cliff{1})\times \Aut(A) \to \Autgr(\Cliff{1} \otimes\overline{\Cliff{1} \otimes \Cliff{1}} \otimes A)$
 maps $(\beta,\alpha)$ to $\beta \otimes \id{M_2(\C)} \otimes \alpha$.
 Indeed if $x \in \Cliff{1}, b \in \Cliff{1}\otimes \Cliff{1}$ and $a\in A$,
 \begin{equation*}
\begin{split}
\psi'(\beta,\alpha)(x\otimes b\otimes a)& =(\theta\otimes \id{A})(\psi(\beta,\alpha)(x\otimes v^*b\otimes a)) \\
& =(\theta\otimes \id{A})(\beta(x)\otimes v^*b\otimes \alpha(a))=\beta(x)\otimes b\otimes \alpha(a)
\end{split}
\end{equation*}
 It follows that $\psi'$ is a homotopy equivalence by Lemma~\ref{lem:Autgr_Cl2_equiv} applied for $M_2(\C)\otimes A$ and Lemma~\ref{lem:homotopy_stable_alg}. We conclude that $\psi$ and hence $\Psi_2\circ \Psi_1$ is a homotopy equivalence.	
\end{proof}

\section{Spectra in algebraic topology}

\subsection{Spectra, commutative $\mI$-monoids and units}
\subsubsection{Spectra and cohomology theories}\label{subsub-scth}
To understand the link between cohomology theories and infinite loop spaces, we need to recall some basic facts from stable homotopy theory. A sequence $(E_n)_{n \in \N_0}$ of pointed topological spaces together with weak equivalences $E_n \to \Omega E_{n+1}$ is called an $\Omega$-spectrum. A topological space $Z$ is an infinite loop space, if there is an $\Omega$-spectrum $(E_n)_{n \in \N_0}$ with $Z \simeq E_0$. This is often written as $Z = \Omega^\infty E$. It is an easy exercise in homotopy theory to check that
\[
	\widetilde{h}^n(X) = [X,E_n]_+\ ,
\]
where the right hand side denotes based homotopy classes of based continuous maps, defines a reduced cohomology theory on the category of pointed finite CW-complexes. For all pairs $(X,A)$ of a finite CW-complex $X$ and a subcomplex $A$, the groups $h^n(X,A) = \widetilde{h}^n(X/A)$ provide an (unreduced) cohomology theory (where we set $X/\emptyset := X_+$, ie.\ $X$ with an added disjoint base point). Using the mapping cone instead of the quotient this is easily extended to pairs $(X,A)$ of a finite CW-complex with a subspace $A$. Note in particular that
\[
	h^n(X) = h^n(X,\emptyset) = [X_+, E_n]_+ = [X, E_n]\ .
\]
\subsubsection{Commutative symmetric ring spectra}
The terminology ``space'' will always refer to compactly generated Hausdorff spaces. A symmetric spectrum  consists of a sequence of pointed spaces  $E_n$ for $n \geq 0$, a basepoint preserving left action of the symmetric group $\Sigma_n$ on $E_n$ and $\Sigma_n$-equivariant based maps $\sigma_n \colon E_n \wedge S^1 \to E_{n+1}$ for $n \geq 0$. There is a category $\mathcal{S}p^{\Sigma}$ of symmetric spectra, which has a symmetric monoidal structure (via the smash product). The (commutative) monoid objects in $\mathcal{S}p^{\Sigma}$ are called (commutative) ring spectra. For details about the category of symmetric spectra we refer the reader to \cite{paper:HoveyShipleySmith}. An ``unpacked'' version of the definition of a commutative symmetric ring spectrum can be found in \cite[Def.~2.1]{DP2}.

\subsubsection{Units}
Let $R$ be a commutative unital ring. The set of invertible elements in $R$ forms the group of units denoted by $GL_1(R)$. Likewise, for a commutative ring spectrum $E$ and a space $X$, $E^0(X)$ is a commutative ring with units $GL_1(E^0(X))$. By May, Quinn, Ray, Tornhave \cite{paper:May}, if $E$ is an $E_\infty$-ring spectrum, one can lift this construction to a spectrum of units denoted $gl_1(E)$ so that $gl_1(E)^0(X)=GL_1(E^0(X))$. In particular this is the case if $E$ is a commutative symmetric ring spectrum in the sense of \cite{paper:HoveyShipleySmith}.

\subsubsection{Commutative $\mI$-monoids}
 In \cite{paper:Schlichtkrull} Schlichtkrull gave a description of the infinite loop space underlying $gl_1(E)$ in terms of commutative $\mI$-monoids, which will be convenient for us and which we outline here.

We recall the following definition from \cite{paper:Schlichtkrull, paper:SagaveSchlichtkrull}. Let $\mI$ be the category with objects given by the finite sets $\mathbf{n} = \{1,\dots,n\}$ including the empty set $\mathbf{0}$ and morphisms injective maps between these sets. An \emph{$\mI$-space} is a (covariant) functor from $\mI$ to the category of topological spaces and continuous maps. The category $\mI$ is symmetric monoidal, where $\mathbf{n} \sqcup \mathbf{m} = \{1, \dots, n+m\}$ on objects, $\mathbf{0}$ is the tensor identity, and tensor products of morphisms act by identifying the subset $\{1,\dots n\} \subset \{1, \dots, n+m\}$ with $\mathbf{n}$ and $\{n+1, \dots, n+m\}$ with $\mathbf{m}$. The category of $\mI$-spaces inherits a symmetric monoidal structure and monoids with respect to this structure are called $\mI$-monoids. To spell out explicitly what this means note that an $\mI$-space $X$ is an $\mI$-monoid if it comes equipped with a multiplication
\[
	\mu_{n,m} \colon X(\mathbf{n}) \times X(\mathbf{m}) \to X(\mathbf{n} \sqcup \mathbf{m})
\]
that is a morphism between $\mI \times \mI$-spaces, such that the obvious associativity and unitality diagrams commute. An $\mI$-monoid $X$ is \textit{commutative} if the following diagram commutes
\begin{equation} \label{eqn:comm_I_mon}
	\begin{tikzcd}[row sep=0.5cm]
		X(\mathbf{n}) \times X(\mathbf{m}) \ar[r,"\mu_{n,m}"] \ar[d] & X(\mathbf{n} \sqcup \mathbf{m}) \ar[d , "(\tau_{m,n})_*"] \\
		X(\mathbf{m}) \times X(\mathbf{n}) \ar[r,"\mu_{m,n}" below] & X(\mathbf{m} \sqcup \mathbf{n})
	\end{tikzcd}	
\end{equation}
where the left vertical arrow interchanges the factors and the right vertical arrow is induced by the block permutation $\tau_{m,n}:\mathbf{n} \sqcup \mathbf{m} \to \mathbf{m} \sqcup \mathbf{n}$.

\subsubsection{The unit spectrum of a commutative symmetric ring spectrum}\label{subsec:csrs}
Given a commutative symmetric ring spectrum $E$ and $n \in \N_0$, let $\Omega^n(E_n)^*$ be the union of path components in $\Omega^n(E_n)$ that have a stable multiplicative homotopy inverse in the sense that for each $f \colon S^n \to E_n$ in $\Omega^n(E_n)^*$ there exists $g \in \Omega^m(E_m)$ such that
\[
\begin{tikzcd}[column sep=1.2cm]
	S^n \wedge S^m \ar[r,"f \wedge g"] & E_n \wedge E_m \ar[r,"\mu_{n,m}"] & E_{n+m}
\end{tikzcd}
\]
is homotopic to the unit map $S^{n+m} \to E_{n+m}$ of the ring spectrum in $\Omega^{n+m}(E_{n+m})$ and similarly for $\mu_{m,n} \circ (g \wedge f)$. This can be extended to a functor $\mathbf{n} \mapsto \Omega^n(E_n)^*$ as follows: Permutations act on $\Omega^n(E_n)^*$ by permuting the circle coordinates and using the $\Sigma_n$-action on $E_n$. Moreover, the map $\mathbf{n} \to \mathbf{n+1}$ given by $i \mapsto i+1$ maps $f \in \Omega_n(E_n)^*$ to the composition
\[
\begin{tikzcd}[column sep=1.2cm]
	S^1 \wedge S^n \ar[r,"u \wedge f"] & E_1 \wedge E_n \ar[r,"\mu_{1,n}"] & E_{1+n}
\end{tikzcd}
\]
where $u \colon S^1 \to E_1$ denotes the unit map of the ring spectrum $E$. Now define
\begin{equation}
	GL_1(E) = \hocolim_{\mI} \Omega^n(E_n)^*	\ .
\end{equation}
Under suitable connectivity assumptions (see \cite[Sec.~2.3]{paper:Schlichtkrull}) we have $\pi_0(GL_1(E)) \cong GL_1(\pi_0(E))$ and in this case the $\mI$-monoid $\mathbf{n} \mapsto \Omega^n(E_n)^*$ has the homotopy type of the unit space of the ring spectrum $E$. Since we assumed $E$ to be commutative, there is also a connective spectrum $gl_1(E)$ associated to this $\mI$-monoid, which is called the \emph{spectrum of units} of $E$. For its underlying infinite loop space we have $\Omega^\infty(gl_1(E)) \simeq GL_1(E)$.

\subsection{Spectra representing $K$-theory and its units} \label{sec:spectra_and_units}
\subsubsection{The K-theory spectrum $KU^D$}\label{subsec:kud}
To any strongly self-absorbing $C^*$-algebra $D$ we can associate a commutative symmetric ring spectrum $KU^D$ defined by the sequence of spaces
\[
	KU_n^D = \hom_{\text{gr}}(\Sgr, (\Cliff{1} \otimes D \otimes \K)^{\otimes n})\ ,
\]
where the set of $*$-homomorphisms is equipped with the point-norm topology and the algebra $D \otimes \K$ is trivially graded. The $*$-homomor\-phisms in \eqref{eqn:tensor_flip} give a $\Sigma_n$-action on $(\Cliff{1} \otimes D \otimes \K)^{\otimes n}$ by permuting the tensor factors. The $\Sigma_n$-action on $KU_n^D$ is given by composition with these maps. The multiplication of $KU^D$ stems from the maps
\[
	KU_n^D \times KU_m^D \to KU_{n+m}^D \quad, \quad (\varphi, \psi) \mapsto (\varphi \otimes \psi) \circ \Delta =: \varphi \ast \psi
\]
and uses the comultiplication $\Delta \colon \Sgr \to \Sgr \otimes \Sgr$ described in the previous section. Note that we can identify $\Omega KU^D_{1}$ with the space
\[
	\hom_{\text{gr}}(\Sgr, C_0(\R,\Cliff{1} \otimes D \otimes \K))\ ,
\]
where $C_0(\R)$ is trivially graded. Let $e \in \K$ be a fixed minimal projection. The map
\[
	\R \to \Cliff{1} \otimes D \otimes \K \quad , \quad t \mapsto t\,(e_1 \otimes 1_D \otimes e)
\]
is an odd, self-adjoint, regular unbounded multiplier on $C_0(\R, \Cliff{1}\otimes D \otimes \K)$ and therefore corresponds to a $*$-homomorphism $\eta_1 \in \Omega KU_1^D$. The structure map $KU_n^D \wedge S^1 \to KU_{n+1}^D$ is the adjoint of
\[
	KU_n^D \to \Omega KU_{n+1}^D \quad , \quad \varphi \mapsto \varphi \ast \eta_1
\]
We refer the reader to \cite[Thm.~4.2]{DP2} for a proof that $KU^D$ indeed defines a commutative symmetric ring spectrum. Inductively, we define $\eta_l = \eta_{l-1} \ast \eta_1 \in \Omega^lKU^D_l$.

\subsubsection{The unit spectrum of $KU^D$}
In the case of the commutative symmetric ring spectrum $KU^D$ introduced above, the map $\mathbf{n} \to \mathbf{n+1}$ defined by $i \mapsto i+1$ for $n \geq 1$ induces the group homomorphism
\[
	K_i(C_0(\R^n,\Cliff{n} \otimes D^{\otimes n})) \to K_i(C_0(\R^{n+1},\Cliff{n+1} \otimes D^{\otimes (n+1)}))
\]
on $\pi_i(\Omega^n(KU^D)^* \cong K_i(C_0(\R^n,\Cliff{n} \otimes D^{\otimes n}))$ given by the tensor product with the element $b \otimes [1_D] \in K_0(C_0(\R,\Cliff{1} \otimes D))$, where $b \in K_0(C_0(\R,\Cliff{1}))$ is the Bott class \cite[19.9.3]{Bla:k-theory}. Since this is an isomorphism, the $\mI$-monoid $\mathbf{n} \mapsto \Omega^n(KU^D_n)^*$ is convergent, in fact, all maps $\mathbf{n} \to \mathbf{m}$ for $n \geq 1$ induce weak homotopy equivalences.

\begin{definition}
	The \emph{unit spectrum of $KU^D$} is the connective spectrum $gl_1(KU^D)$ associated to the commutative $\mI$-monoid $\mathbf{n} \mapsto \Omega^n(KU^D_n)^*$ defined in Sec.~\ref{subsec:csrs} and \ref{subsec:kud}. We define the \emph{space of units} as the underlying infinite loop space, ie.\ $GL_1(KU^D) = \Omega^{\infty}(gl_1(KU^D))$.
\end{definition}

\subsection{Localization of spectra}
Given a commutative ring $R$ with unit and a subset $S \subset R$ we can invert the elements in $S$ and form the localization $R[S^{-1}]$ at the multiplicative closure of~$S$. This procedure extends to spectra. In fact,  Bousfield gave a more general construction in \cite{Bousfield} that localizes a spectrum with respect to a homology theory. We will only need the simplest case here, which is the spectral analogue of $\Z_P$ for a set of primes~$P$, ie.\ we would like to define a commutative symmetric ring spectrum $E[P^{-1}]$ starting from a commutative symmetric ring spectrum $E$. As explained for example in \cite[Sec.~3.3]{Casacuberta} such a localization is \emph{smashing}, ie.\ $E[P^{-1}] \simeq E \wedge \spS[P^{-1}]$, where $\spS[P^{-1}]$ is the corresponding localization of the sphere spectrum $\spS$. The spectrum $\spS[P^{-1}]$ can be obtained as a commutative symmetric ring spectrum as follows: Following \cite[Chap.~V, Def.~1.12]{EKMM} and \cite[Chap.~V, Prop.~2.3]{EKMM} there is a model as a commutative $S$-algebra. The functor $\Phi$ constructed in \cite{Schwede_Smod_symspec} produces a commutative symmetric ring spectrum~$\spS[P^{-1}]$ from this, which we will also assume to be cofibrant in the positive model structure on commutative symmetric ring spectra. This ensures that the smash product in Def.~\ref{def:Plocalization} has the correct homotopy type. Let $\mathbb{P} \subset \N$ be the set of all prime numbers.

\begin{definition} \label{def:Plocalization}
	Let $P \subset \mathbb{P}$. Let $E$ be a symmetric spectrum and let $\spS[P^{-1}]$ be the cofibrant commutative symmetric ring spectrum obtained from the sphere spectrum $\spS$ as described above. We define the $P$-localization of $E$ to be
	\[
		E[P^{-1}] = E \wedge \spS[P^{-1}]\ .
	\]
\end{definition}

Let $p \in \mathbb{P}$ and $P \subset \mathbb{P}$. From the above discussion we obtain the following commutative symmetric ring spectra
\[
	KU := KU^\C \quad , \quad KU_p := KU[\{p\}^{-1}] \quad , \quad KU_{(p)} := KU[(\mathbb{P} \setminus \{p\})^{-1}] \quad , \quad KU_P:=KU[P^{-1}] \ .
\]

\begin{lemma} \label{lem:Plocalization}
Let $P \subset \mathbb{P}$ and let $M_P$ be the associated UHF-algebra. There is a zig-zag of stable equivalences of commutative symmetric ring spectra $KU[P^{-1}] \simeq KU^{M_P}$ under $KU$.
\end{lemma}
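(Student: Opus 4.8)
\textbf{Proof proposal for Lemma~\ref{lem:Plocalization}.}
The plan is to compare the two commutative symmetric ring spectra by exhibiting both as localizations of $KU$ and producing the comparison maps through a single multiplicative $*$-homomorphism. First I would recall that $M_P = \bigotimes_{p\in P} M_p$ with $M_p = M_p(\C)^{\otimes\infty}$ and that the unital inclusion $\C \to M_P$ induces a map of commutative symmetric ring spectra $KU = KU^\C \to KU^{M_P}$; this is the ``under $KU$'' part of the statement and is automatic from the functoriality of $D \mapsto KU^D$ in unital $*$-homomorphisms $D \to D'$ of strongly self-absorbing algebras, as recorded in Section~\ref{subsec:kud} (the multiplication on $KU^D$ is induced by $\Delta$ on $\Sgr$ and the unital embedding, so it is compatible with the ring structures). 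On $K$-theory, i.e.\ on $\pi_*$ of the associated $\Omega$-spectra, this map realises the ring homomorphism $\Z = K_0(\C) \to K_0(M_P) = \Z_P = \bigotimes_{p\in P}\Z[\tfrac1p]$, which is precisely the localization homomorphism inverting the primes in $P$.

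Next I would bring in the localized sphere side. By Definition~\ref{def:Plocalization} we have the comparison map $KU \to KU[P^{-1}] = KU \wedge \spS[P^{-1}]$ (smashing with the unit map $\spS \to \spS[P^{-1}]$), again a map of commutative symmetric ring spectra under $KU$, inducing $\Z \to \Z_P = \pi_0(KU[P^{-1}])$ on $\pi_0$ and, more generally, $K_*(X) \to K_*(X) \otimes_\Z \Z_P$ since the localization is smashing. The key point is that both maps $KU \to KU^{M_P}$ and $KU \to KU[P^{-1}]$ have the following universal property among commutative symmetric ring spectra (or at least: in the stable homotopy category of commutative ring spectra under $KU$): a map out of $KU$ factors essentially uniquely through the $P$-localization as soon as multiplication by each $p \in P$ is invertible on the target's homotopy ring. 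For $KU^{M_P}$ the element $p \in \pi_0$ is invertible because $[1_{M_p}]$ has a multiplicative inverse in $K_0(M_p) = \Z[\tfrac1p]$, namely the class of a rank-$p^{-k}$ projection coming from the $k$-th tensor factor, in the limit. Hence the composite $KU \to KU^{M_P}$ factors as $KU \to KU[P^{-1}] \xrightarrow{\ f\ } KU^{M_P}$ for a map $f$ of commutative ring spectra.

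It then remains to check that $f$ is a stable equivalence, and since we are dealing with connective spectra whose homotopy groups are computed by the Atiyah--Hirzebruch spectral sequence (as noted in the introduction, a $\pi_*$-isomorphism of spectra suffices for our cohomological purposes), it is enough to see that $f$ induces an isomorphism on all homotopy groups. But $\pi_*(KU[P^{-1}]) = \pi_*(KU) \otimes_\Z \Z_P = K_*(\mathrm{pt}) \otimes_\Z \Z_P$ by smashing-ness, while $\pi_*(KU^{M_P}) = K_*(M_P) = K_*(\C) \otimes_\Z \Z_P$ by the $K$-theory computation in Subsection~\ref{subsec:ssa} (flatness of $\Z_P$ over $\Z$, or directly the known $K$-theory of UHF algebras), and $f$ is compatible with the identifications of $\pi_0$ as $\Z_P$ and respects the module structure over it; so on each $\pi_n$ it is a $\Z_P$-module map between free rank-one (for $n$ even) or zero (for $n$ odd) $\Z_P$-modules that is the identity after tensoring down, hence an isomorphism. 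Rather than literally producing the factorization $f$ as a strict map, the cleanest route — and the one I would actually write — is to assemble a zig-zag: realise $KU[P^{-1}]$ as a filtered homotopy colimit (telescope) of copies of $KU$ along multiplication-by-$p$ maps, realise $KU^{M_P}$ similarly as $\hocolim$ over finite sub-tensor-products $M_F = \bigotimes_{p\in F} M_p(\C)^{\otimes n}$ with the structure maps given by the corner embeddings, and then construct compatible maps between the two telescopes indexed so that on each stage one has a multiplication-by-integer self-map of $KU$; passing to the colimit gives the stable equivalence, and every map in sight is one of commutative symmetric ring spectra under $KU$, yielding the asserted zig-zag. The main obstacle is the bookkeeping needed to make these homotopy colimit descriptions strictly compatible with the commutative symmetric ring spectrum structures (so that the zig-zag consists of genuine ring maps, not merely maps of underlying spectra); this is exactly the kind of point handled by the model-categorical machinery for commutative $S$-algebras and the functor $\Phi$ of \cite{Schwede_Smod_symspec} invoked before Definition~\ref{def:Plocalization}, and I would lean on that rather than doing it by hand.
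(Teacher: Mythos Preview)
Your approach would eventually work, but it is considerably more involved than the paper's, and the technical difficulties you yourself flag---producing the factorization $f$ as a strict map of commutative symmetric ring spectra, or making the telescope comparison respect the ring structures---are precisely what the paper's argument sidesteps. (Note in passing that the multiplication-by-$p$ self-maps of $KU$ in your telescope are not ring maps, since they do not preserve the unit; so the bookkeeping you anticipate is genuinely nontrivial, not just tedious.)

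The paper never attempts a direct comparison map. Instead it introduces a common target $KU^{M_P}[P^{-1}]$ and uses the commutative square
\[
\begin{tikzcd}[column sep=0.7cm, row sep=0.7cm]
	KU \ar[r] \ar[d] \ar[dr] & KU^{M_P} \ar[d,"\simeq"] \\
	KU[P^{-1}] \ar[r,"\simeq" below] & KU^{M_P}[P^{-1}]
\end{tikzcd}
\]
in which every arrow is already an honest map of commutative symmetric ring spectra under $KU$: the top map is induced by $\C \to M_P$, the vertical maps come from smashing with the unit $\spS \to \spS[P^{-1}]$, and the bottom map is the top map smashed with $\spS[P^{-1}]$. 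No universal-property factorization is needed. One then checks on $\pi_*$ that the right vertical map is an equivalence (because $K_*(M_P)$ is already $\Z[P^{-1}]$ in even degrees, and $\Z[P^{-1}] \otimes \Z[P^{-1}] \cong \Z[P^{-1}]$) and the bottom map is an equivalence (it is the $P$-localization of a map realizing $\Z \hookrightarrow \Z[P^{-1}]$ on $\pi_0$). The zig-zag is then $KU[P^{-1}] \xrightarrow{\ \simeq\ } KU^{M_P}[P^{-1}] \xleftarrow{\ \simeq\ } KU^{M_P}$, with both legs under $KU$. This buys you exactly the thing you identified as the main obstacle: all maps in sight are strict ring maps for free.
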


\begin{proof}
Using \cite[Thm.~4.2]{DP2} we see that the homotopy groups of the localization satisfy
\[
	\pi_i(KU^D[P^{-1}]) \cong \pi_i(KU^D) \otimes \Z[P^{-1}] \cong K_i(D) \otimes \Z[P^{-1}]
\]
for $i \in \Z$. Since $K_{\text{even}}(M_P) \cong \Z[P^{-1}]$, $K_{\text{odd}}(M_P) = 0$ and $\Z[P^{-1}] \otimes \Z[P^{-1}] \cong \Z[P^{-1}]$, the map on the right and the bottom map in the following commutative square
\[
\begin{tikzcd}[column sep=0.7cm, row sep=0.7cm]
	KU \ar[r] \ar[d] \ar[dr] & KU^{M_P} \ar[d,"\simeq"] \\
	KU[P^{-1}] \ar[r,"\simeq" below] & KU^{M_P}[P^{-1}]
\end{tikzcd}
\]	
are stable equivalences of commutative symmetric ring spectra.
\end{proof}

\begin{theorem}[cf. \cite{DP2}]\label{thm:B} {There are stable equivalences of spectra:}
\begin{align*}
	gl_1(KU^\C) &\simeq gl_1(KU^\ZZ)\simeq gl_1(KU^{\OO_\infty}) \\
	gl_1(KU^{M_{P}}) &\simeq gl_1(KU^{M_{P}\otimes \OO_\infty })
\end{align*}
\end{theorem}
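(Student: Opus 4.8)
The plan is to deduce the asserted stable equivalences of unit spectra from the already-established $K$-theoretic facts about the ring spectra $KU^D$, together with the observation that the homotopy groups of $gl_1(E)$ agree with those of $E$ in positive degrees (and with $GL_1(\pi_0 E)$ in degree $0$). First I would recall from \cite[Thm.~4.2]{DP2} that $\pi_i(KU^D) \cong K_i(D)$, so that for the algebras in question we have $\pi_i(KU^\C) \cong \pi_i(KU^\ZZ) \cong \pi_i(KU^{\OO_\infty}) \cong K_i(\ZZ)$, which is $\Z$ for $i$ even and $0$ for $i$ odd, and similarly $\pi_i(KU^{M_P}) \cong \pi_i(KU^{M_P \otimes \OO_\infty}) \cong K_i(M_P)$, which is $\Z[P^{-1}]$ for $i$ even and $0$ for $i$ odd. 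The ring structures on $\pi_0$ are the standard ones ($\Z$, resp. $\Z[P^{-1}]$), for which $GL_1 = \{\pm 1\}$, resp. $\Z[P^{-1}]^\times$, in either the stably finite or the purely infinite representative, since the unital $*$-homomorphisms $\C \to \ZZ \to \OO_\infty$ and $M_P \to M_P \otimes \OO_\infty$ are $KK$-equivalences inducing ring isomorphisms on $K_0$.

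Next I would exhibit explicit maps of commutative symmetric ring spectra realizing these identifications. The unital inclusions $\C \to \ZZ \to \OO_\infty$ and $M_P \to M_P \otimes \OO_\infty$ induce maps of commutative symmetric ring spectra $KU^\C \to KU^\ZZ \to KU^{\OO_\infty}$ and $KU^{M_P} \to KU^{M_P \otimes \OO_\infty}$ (functoriality of the construction in Sec.~\ref{subsec:kud} in the coefficient algebra $D$). By the homotopy-group computation above, each of these maps is a $\pi_*$-isomorphism, hence a stable equivalence of commutative symmetric ring spectra. The functor $gl_1$ — being defined via the commutative $\mI$-monoid $\mathbf{n} \mapsto \Omega^n(E_n)^*$ and the infinite loop space machine of \cite{paper:Schlichtkrull} — sends stable equivalences of commutative symmetric ring spectra to stable equivalences of connective spectra: on positive homotopy groups $gl_1$ has the same homotopy groups as $E$, and on $\pi_0$ it replaces $\pi_0(E)$ by $GL_1(\pi_0(E))$, which is preserved under a ring isomorphism. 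Applying $gl_1$ thus yields $gl_1(KU^\C) \simeq gl_1(KU^\ZZ) \simeq gl_1(KU^{\OO_\infty})$ and $gl_1(KU^{M_P}) \simeq gl_1(KU^{M_P \otimes \OO_\infty})$.

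The one point that requires a little care — and which I expect to be the main technical obstacle — is the convergence/connectivity hypothesis needed to guarantee that the $\mI$-monoid $\mathbf{n} \mapsto \Omega^n(KU^D_n)^*$ genuinely computes the unit space, i.e.\ that $\pi_0(GL_1(KU^D)) \cong GL_1(\pi_0 KU^D)$ and that $gl_1$ takes stable equivalences to stable equivalences. This is exactly the content of the discussion in Sec.~\ref{subsec:kud}: the maps $\mathbf{n} \to \mathbf{m}$ for $n \geq 1$ induce weak equivalences on $\Omega^n(KU^D_n)^*$ because they are given by multiplication with the Bott class $b \otimes [1_D]$, which is invertible; hence the $\mI$-monoid is convergent and the standard results of \cite{paper:Schlichtkrull} apply uniformly to all the $D$ under consideration. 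Granting this, the argument is just: induced map of ring spectra $\Rightarrow$ $\pi_*$-iso (by the $K$-theory computation) $\Rightarrow$ stable equivalence $\Rightarrow$ apply $gl_1$. (Alternatively, and perhaps more cleanly, one can first pass through the $P$-localized models of Lemma~\ref{lem:Plocalization}, writing $KU^{M_P} \simeq KU[P^{-1}] = KU \wedge \spS[P^{-1}]$ and $KU^{M_P \otimes \OO_\infty} \simeq KU^{\OO_\infty}[P^{-1}]$, and then reduce the second line to the first line localized at $P$; but the direct comparison above already suffices.)
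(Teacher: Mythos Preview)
Your proposal is correct and follows essentially the same route as the paper: produce maps of commutative symmetric ring spectra from the unital inclusions $\C \to \ZZ \to \OO_\infty$ and $M_P \to M_P \otimes \OO_\infty$, observe that these are $\pi_*$-isomorphisms via the $KK$-equivalences, and then pass to unit spectra. The only place the paper is slightly more explicit is in justifying the last step: rather than appealing to a general principle that $gl_1$ preserves stable equivalences, it invokes the fact that $KU^D$ is a positive $\Omega$-spectrum, so the induced maps are already weak equivalences levelwise on $\Omega^n KU^D_n$ (for $n \geq 1$), and then checks that the invertible components correspond --- this is exactly the content of your ``convergence'' paragraph, phrased a bit differently.
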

\begin{proof} Since the unital $*$-homomorphisms $\C \to \ZZ \to \OO_\infty$ are $KK$-equivalences, the arguments from the proof of \cite[Thm.4.7]{DP2} show that they induce $\pi_*$-isomorphisms in the sense of \cite[Def.~5.3.10]{book:BarnesRoitzheim}. By \cite[Prop.~5.3.12]{book:BarnesRoitzheim} they give stable equivalences of commutative symmetric ring spectra $KU^D \simeq KU^{D \otimes \ZZ} \simeq KU^{D\otimes \OO_\infty}$ for any strongly self-absorbing $C^*$-algebra $D$. Since $KU^D$ is a positive $\Omega$-spectrum, the above $*$-homomorphisms induce weak equivalences $\Omega^n KU^D_n \simeq \Omega^n KU^{D \otimes \ZZ}_n \simeq \Omega^n KU^{D\otimes \OO_\infty}_n$. Moreover, the invertible components of $\Omega^n KU^D_n$ correspond to those elements in $\pi_0(\Omega^n KU^D_n) \cong \pi_0(\Omega KU^D_1)$ that are invertible with respect to the multiplication
\[
	\pi_0(\Omega KU^D_1) \times \pi_0(\Omega KU^D_1) \to \pi_0(\Omega^2 KU^D_2) \cong \pi_0(\Omega KU^D_1)\ ,
\]
Since these are identified by the maps induced by the $*$-homomorphisms, they induce morphisms of commutative $\mI$-monoids $(\Omega^n KU^D_n)^* \to (\Omega^n KU^{D \otimes \ZZ}_n)^*$ and $(\Omega^n KU^D_n)^* \to (\Omega^n KU^{D \otimes \OO_\infty}_n)^*$\ that are weak equivalences on the respective homotopy colimits.
Hence, they give rise to stable equivalences of the corresponding unit spectra:
\[
	gl_1(KU^D)\simeq gl_1(KU^{D \otimes \ZZ})\simeq gl_1(KU^{D\otimes \OO_\infty})\ .
\]
Furthermore, by Lem.~\ref{lem:Plocalization} there are zig-zags of stable equivalences of commutative symmetric ring spectra $KU_{P} \simeq KU^{M_P}$ which induce a stable equivalence
\[
	gl_1(KU^{M_{P}}) \simeq gl_1(KU_{P})\ . \qedhere
\]

\end{proof}
\section{Generalized Dixmier-Douady theory}\label{DD}
\subsection{Bundles of strongly selfabsorbing $C^*$-algebras}
If $D$ is a strongly self-absorbing $C^*$-algebra and $\K$ is the $C^*$-algebra of compact operators on a separable infinite dimensional Hilbert space then  $(D\otimes \K)\otimes (D\otimes \K)\cong D\otimes \K$. In particular, the fibrewise tensor product induces a semigroup structure on the set of locally trivial $C^*$-algebra bundles with fibre $D \otimes \K$. We have shown in \cite{DP1,DP2} that the isomorphism classes of such bundles over a compact metrizable space $X$ in fact form an abelian group under the operation of fibrewise tensor product. This was achieved by showing that the group $\Aut(D \otimes \K)$ admits an infinite loop space structure such that the tensor product operation on the classifying space $B\Aut(D \otimes \K)$ coincides with the composition of loops in $\Omega B(B\Aut(D \otimes \K))$ up to homotopy. Since infinite loop spaces give rise to connective spectra, there is a cohomology theory $E_D^*(X)$ such that its zeroth group $E_D^0(X)$ computes the homotopy classes $E_D^0(X)\cong [X,\Aut(D\otimes \K)]$ and its first group $E_D^1(X)=[X,B\Aut(D \otimes \K)]$ computes the group of isomorphism classes of locally trivial bundles with fibers $D \otimes \mathcal{K}$ endowed with the operation of tensor product. The coefficients of the cohomology theory $E_D^*(X)$ are given by the homotopy groups of $\Aut(D\otimes \K)$ computed in \cite{DP1}:
\begin{equation}\label{homotopy}
E^{-i}(*)\cong \pi_i(\Aut(D\otimes \K))=
\begin{cases}
K_0(D)^{\times}_{+},\,\,\text{if}\,\, i=0\\
K_i(D),\,\,\text{if}\,\, i\geq 1.\\
\end{cases}
\end{equation}
The coefficients determine a cohomology theory only rationally, ie.\ up to torsion. In pursuit of a better understanding of $E_D^*(X)$ it was further shown in \cite{DP2} that its spectrum is closely related to $gl_1(KU^D)$ and in fact is equivalent to it, if $D$ is purely infinite.
In the following we will give more context in order to explain the results of \cite{DP2} in preparation for their extension to  graded algebras.

Let $KU^D$ be the commutative symmetric ring spectrum introduced in Sec.~\ref{sec:spectra_and_units}. The spaces representing its units~$gl_1(KU^D)$ are denoted by $GL_1(KU^D)$, $BGL_1(KU^D)$, $BBGL_1(KU^D)$, etc. Consider the commutative $\mI$-monoid $G_D$ defined by
\[
	G_D(\mathbf{n}) = \Aut((D \otimes \K)^{\otimes n})\ .
\]
The map $G_D(\mathbf{m} \to \mathbf{n})$ associated to a morphism $\mathbf{m} \to \mathbf{n}$ permutes the tensor factors labelled by the elements in the image of the morphism and acts as $\id{D \otimes \K}$ on the remaining ones. The monoid structure is induced by the tensor product of automorphisms (see \cite[Sec.~4.2]{DP2}). Note that $G_D$ takes values in topological groups. Denote the $\mI$-monoid multiplication by $\mu_{m,n}$ and the composition of automorphisms by
\[
	\nu_n \colon G_D(\mathbf{n}) \times G_D(\mathbf{n}) \to G_D(\mathbf{n})\ .
\]
Observe that $(\alpha_1 \otimes \alpha_2) \circ (\beta_1 \otimes \beta_2) = (\alpha_1 \circ \beta_1) \otimes (\alpha_2 \circ \beta_2)$ implies that the following diagram commutes:
\begin{equation} \label{eqn:EH-I-group}
	\begin{tikzcd}[column sep=2.5cm]
		G_D(\mathbf{m}) \times G_D(\mathbf{m}) \times G_D(\mathbf{n}) \times G_D(\mathbf{n}) \ar[r,"(\mu_{m,n} \times \mu_{m,n}) \circ \tau"] \ar[d,"\nu_m \times \nu_n" left] & G_D(\mathbf{m} \sqcup \mathbf{n}) \times G_D(\mathbf{m} \sqcup \mathbf{n}) \ar[d,"\nu_{m+n}"] \\
		G_D(\mathbf{m}) \times G_D(\mathbf{n}) \ar[r,"\mu_{m,n}" below] & G_D(\mathbf{m} \sqcup \mathbf{n})
	\end{tikzcd}
\end{equation}
where $\tau$ flips the two middle factors of the product in the upper left hand corner. Therefore $G_D$ is an EH-$\mI$-group (Eckmann-Hilton $\mI$-group) in the sense of \cite[Def.~3.1]{DP2}. This EH-$\mI$-group acts on $KU^D$, and the action gives rise to a morphism of commutative $\mI$-monoids \cite[Thm.~3.8]{DP2}
\[
	G_D \to \Omega^\infty(KU^D)^*\ ,
\]
which induces an isomorphism on all homotopy groups $\pi_n$ for $n > 0$ and produces an equivalence of the underlying infinite loop spaces
\(
	\left(G_D\right)_{h\mI} \to GL_1(KU^D)
\)
if $D$ is purely infinite \cite[Thm.~4.6]{DP2}. Since all maps $\mathbf{m} \to \mathbf{n}$ with $m \geq 1$ induce homotopy equivalences, we have $\left(G_D\right)_{h\mI} \simeq \Aut(D \otimes \K)$ for the homotopy colimit \cite[Lem.~3.5]{DP2}. The first delooping $B_{\mu}(G_D)_{h\mI}$ of $(G_D)_{h\mI}$ with respect to the $\mI$-monoid structure has the homotopy type of $B\Aut(D \otimes \K)$ according to \cite[Thm.~3.6]{DP2}. It was also shown in \cite[Thm.~4.9]{DP2} that the infinite loop space structure on $B\Aut(D \otimes \K)$ obtained in this way agrees with the one found in \cite{DP1}. Hence the connective spectrum underlying $G_D$ indeed represents  the cohomology theory $X \mapsto E_D^*(X)$.

\begin{theorem}[\cite{DP2},Thm.4.6] \label{thm:A}
	Let $D$ be a strongly self-absorbing $C^*$-algebra. The action of $\G_D$  on the ring spectrum $KU^D$ induces a map
	\begin{equation} \label{eqn:1map_of_Gamma}
\Gamma(\G_D) \to \Gamma(\Omega^\infty(KU^D)^*)
	\end{equation}
	of the associated $\Gamma$-spaces. In turn, this induces an isomorphism on all homotopy groups $\pi_n$ with $n > 0$ of the corresponding connective spectra and the homomorphism
	\[
		\{\pm 1\} \times K_0(D)_{+}^\times \to K_0(D)^\times \quad , \quad (a,b) \mapsto a \cdot b
	\]
	on $\pi_0$. In particular, \eqref{eqn:1map_of_Gamma} induces an equivalence in the stable homotopy  category of spectra  if $D$ is purely infinite and satisfies the UCT and hence  $E^*_D(X)\cong gl_1(KU^D)^*(X).$
\end{theorem}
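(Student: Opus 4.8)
The plan is to establish this along the lines of \cite{DP2}, in three steps: first promote the action of $\G_D$ on $KU^D$ to a morphism of commutative $\mI$-monoids $\G_D \to \Omega^\infty(KU^D)^*$; then apply the infinite loop space machine to obtain the map \eqref{eqn:1map_of_Gamma} of $\Gamma$-spaces; and finally analyse its effect on the homotopy groups of the associated connective spectra, specialising to purely infinite $D$ at the end.

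For the first step, recall that $KU^D$ is a commutative symmetric ring spectrum \cite[Thm.~4.2]{DP2}. An automorphism $\alpha \in \G_D(\mathbf{n}) = \Aut((D\otimes\K)^{\otimes n})$ acts on $KU^D_n = \hom_{\text{gr}}(\Sgr, (\Cliff{1}\otimes D\otimes\K)^{\otimes n})$ by post-composition with $\id{\Cliff{1}^{\otimes n}} \otimes \alpha$, using the identification $(\Cliff{1}\otimes D\otimes\K)^{\otimes n} \cong \Cliff{1}^{\otimes n}\otimes(D\otimes\K)^{\otimes n}$ of graded $C^*$-algebras. One checks that this action is $\Sigma_n$-equivariant, commutes with the structure maps $KU^D_n\wedge S^1\to KU^D_{n+1}$ and is multiplicative with respect to $\mu_{n,m}$; all of this is immediate from $\id{\Cliff{1}^{\otimes n}}\otimes\alpha$ being a graded $*$-homomorphism and from $\Delta$ only acting on the $\Sgr$-variable. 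In the form needed for Schlichtkrull's description of the unit $\mI$-monoid, one records this by sending $\alpha$ to the class $\alpha_*\eta_n \in \Omega^n KU^D_n$ of the canonical unit $\eta_n$; since $(\alpha^{-1})_*\eta_n$ is a stable multiplicative inverse, this lands in $(\Omega^n KU^D_n)^*$, and assembling over $\mI$ and checking compatibility with the block permutations of \eqref{eqn:comm_I_mon} yields the required morphism of commutative $\mI$-monoids. The second step is then formal: apply $\Gamma$.

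For the third step, I would identify both sides. Since all structure maps $\mathbf{m}\to\mathbf{n}$ with $m\geq 1$ are homotopy equivalences, $(\G_D)_{h\mI}\simeq\Aut(D\otimes\K)$, and by \cite{DP1} (see \eqref{homotopy}) its homotopy groups are $\pi_0\cong K_0(D)^\times_+$ and $\pi_i\cong K_i(D)$ for $i\geq 1$. On the other side, the $\mI$-monoid $\Omega^\infty(KU^D)^*$ is convergent --- Bott periodicity makes all its structure maps in positive degrees isomorphisms --- so its homotopy colimit is $GL_1(KU^D)$, with $\pi_0\cong GL_1(K_0(D))=K_0(D)^\times$ and $\pi_i\cong\pi_i(KU^D)\cong K_i(D)$ for $i\geq 1$. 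It remains to identify the induced map: on $\pi_0$ one reads off from the construction the homomorphism sending the class of an automorphism to the unit of $K_0(D)$ by which it acts, i.e.\ the inclusion $K_0(D)^\times_+\hookrightarrow K_0(D)^\times$ (for purely infinite $D$ this is the identity, since $K_0(D)^\times_+=K_0(D)^\times$); in positive degrees, the action map is compatible with the $K$-theoretic identifications of $\pi_*\Aut(D\otimes\K)$ from \cite{DP1} and of $\pi_*GL_1(KU^D)\cong K_*(D)$, hence an isomorphism for $i\geq 1$. A map of connective spectra that is a $\pi_i$-isomorphism for all $i$ is a stable equivalence; since for $D$ purely infinite and satisfying the UCT the $\pi_0$-map is also an isomorphism, \eqref{eqn:1map_of_Gamma} is a stable equivalence in that case, and the represented cohomology theories therefore agree, $E_D^*(X)\cong gl_1(KU^D)^*(X)$.

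The main obstacle is the comparison in positive degrees: it is not enough to observe that $\pi_i\Aut(D\otimes\K)$ and $\pi_i\,gl_1(KU^D)$ are both abstractly isomorphic to $K_i(D)$ for $i\geq 1$ --- one must show that the \emph{specific} map induced by the action realises this as an isomorphism, which forces one to unwind how the ring-spectrum multiplication of $KU^D$ (built from $\Delta$) interlocks with the composition (EH-$\mI$-group) product on $\G_D$. This is exactly the content of \cite[Thm.~3.8 and Thm.~4.6]{DP2}, and I would import that argument.
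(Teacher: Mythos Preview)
The paper does not give its own proof of this theorem: it is stated as a quotation of \cite[Thm.~4.6]{DP2} and no proof environment follows. Your outline is an accurate summary of the argument from \cite{DP2} (action $\Rightarrow$ morphism of commutative $\mI$-monoids via $\alpha \mapsto \alpha_*\eta_n$, apply $\Gamma$, compare homotopy groups), and you are right to flag that the only non-formal point is verifying that the induced map on $\pi_i$ for $i\geq 1$ is \emph{the} isomorphism, not merely a map between abstractly isomorphic groups; this is precisely what \cite[Thm.~4.6]{DP2} establishes and what you propose to import.

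One small remark: your description of the $\pi_0$-map as the inclusion $K_0(D)^\times_+ \hookrightarrow K_0(D)^\times$ is the correct statement for the ungraded $\mI$-monoid $\G_D$, whose $\pi_0$ is $K_0(D)^\times_+$. The paper's displayed formula $\{\pm 1\} \times K_0(D)^\times_+ \to K_0(D)^\times$ appears to be a copy of the graded statement (Thm.~\ref{thm:action_on_KUD}) rather than the ungraded one; you should not try to reconcile your argument with that extra $\{\pm 1\}$ factor. Also, when you argue that $\alpha_*\eta_n$ lands in the invertible components, note that $(\alpha^{-1})_*\eta_n$ being an inverse uses that $\alpha\otimes\alpha^{-1}$ is path-connected to the identity in $\G_D(\mathbf{2n})$, i.e.\ the ``compatible inverses'' property of the EH-$\mI$-group; this is implicit in your citation of \cite[Thm.~3.8]{DP2} but worth making explicit.
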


As stated in \eqref{homotopy} we have $\pi_0(\Aut(D \otimes \K)) \cong K_0(D)_+^\times$, whereas $\pi_0(GL_1(KU^D)) \cong K_0(D)^\times$. Hence, the assumption that $D$ is purely infinite is needed, because the tensor product with $\OO_\infty$ trivialises the order structure on $K_0(D)$. The group $gl_1(KU)^1(X)$ is closely related to the graded Brauer group discussed for example in \cite{paper:Parker-Brauer}. This suggests a different approach to correct $\pi_0$ making use of graded $C^*$-algebras, in particular the Clifford algebras that appear in the definition of the spectrum $KU^D$. Following the proof outlined above we will see in the next sections that this idea indeed motivates another very natural EH-$\mI$-group representing $gl_1(KU^D)$.

Apart from this we obtain from Theorem \ref{thm:A} and Theorem \ref{thm:B} the isomorphisms:
\begin{align*}
\label{eqn:eqlabel}
 E^*_{\OO_\infty}(X) \cong gl_1(KU)^*(X) \qquad \text{and} \qquad E^*_{M_{P}\otimes \OO_\infty}(X) \cong gl_1(KU_{P})^*(X)
\end{align*}
These groups  will be further discussed in the sequel. 
	
\subsection{Bundles of strongly selfabsorbing $C^*$-algebras with a Clifford grading}

\subsubsection{Eckmann-Hilton $\mathcal{I}$-groups and graded $C^*$-algebras}
Let $D$ be a strongly self-absorbing $C^*$-algebra. Motivated by the above considerations we define the $\mI$-space $\Ggr_D$ by
\[
	\Ggr_D(\mathbf{n}) = \Autgr( (\Cliff{1} \otimes D \otimes \K)^{\otimes n})
\]
where the graded automorphism groups are equipped with the point-norm topology. Note that $\Ggr_D(\mathbf{0}) = \Autgr( (\Cliff{1} \otimes D \otimes \K)^{\otimes 0}) = \Aut(\C)$ is the one-point space. The value of $\Ggr_D$ on the morphisms of $\mI$ is fixed by the following: If $\sigma \colon \mathbf{n} \to \mathbf{n}$ denotes a permutation, then $\Ggr_D(\sigma)$ is defined by $\alpha \mapsto \epsilon_\sigma \circ \alpha \circ \epsilon_{\sigma}^{-1}$, where
\begin{equation}\label{eqn:permutation}
\epsilon_{\sigma} \colon (\Cliff{1} \otimes D \otimes \K)^{\otimes n} \to (\Cliff{1} \otimes D \otimes \K)^{\otimes n}
\end{equation}
is the graded permutation of the tensor factors corresponding to $\sigma$ (see \eqref{eqn:tensor_flip}). In fact, because each $\alpha \in \Ggr_D(\mathbf{n})$ preserves the degree of homogeneous elements, the sign that appears in \eqref{eqn:tensor_flip} cancels out when we conjugate by $\epsilon_{\sigma}$. Hence, the conjugation actually agrees with the one by the ungraded tensor flip (which is, however, not an algebra automorphism).

If $m = n+1$ and $\iota \colon \mathbf{n} \to \mathbf{m}$ denotes the map $i \mapsto i+1$, then $\Ggr_D(\iota) \colon \Ggr_D(\mathbf{n}) \to \Ggr_D(\mathbf{m})$ maps $\alpha$ to $\id{} \otimes \alpha$. Just as in the case of $G_D$ (see \cite[Sec.~4.2]{DP2}) defining $\Ggr_D(\iota)$ and $\Ggr_D(\sigma)$ fixes the $\mI$-space structure completely. We can equip $\Ggr_D$ with a multiplication that turns it into an $\mI$-monoid as follows:
\[
	\mu_{m,n} \colon \Ggr_D(\mathbf{m}) \times \Ggr_D(\mathbf{n}) \to \Ggr_D(\mathbf{m} \sqcup \mathbf{n}) \qquad ; \qquad (\alpha_1, \alpha_2) \mapsto \alpha_1 \otimes \alpha_2\ .
\]
In particular, the single point in $\Ggr_D(\mathbf{0})$ acts as a unit for $\mu_{m,n}$. Consider the block permutation $\sigma \colon \mathbf{m} \sqcup \mathbf{n} \to \mathbf{n} \sqcup \mathbf{m}$. Since $\epsilon_{\sigma} \circ (\alpha_1 \otimes \alpha_2) \circ \epsilon_{\sigma}^{-1} = \alpha_2 \otimes \alpha_1$, the $\mI$-monoid $\Ggr_D$ is commutative in the sense of \eqref{eqn:comm_I_mon}. As in the ungraded case we still have $(\alpha_1 \otimes \alpha_2) \circ (\beta_1 \otimes \beta_2) = (\alpha_1 \circ \beta_1) \otimes (\alpha_2 \circ \beta_2)$. Therefore $\Ggr_D$ is an EH-$\mI$-group in the sense of Def.~\cite[Def.~3.1]{DP2}

\begin{lemma} \label{lem:Ggr_stable}
	Let $D$ be a strongly self-absorbing $C^*$-algebra. The EH-$\mI$-group $\Ggr_D$ is stable, ie.\ the functor $\Ggr_D$ maps all morphisms $\mathbf{m} \to \mathbf{n}$ with $m > 0$ in $\mI$ to homotopy equivalences.
\end{lemma}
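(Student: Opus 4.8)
The plan is to reduce the statement to results already in the paper: Proposition~\ref{lem:Cliff_hom_eq} (tensoring with $\id{\Cliff{1}}$ is a homotopy equivalence), Lemma~\ref{lem:Autgr_Cl1_short_exact} (the structure of $\Autgr(\Cliff{1}\otimes A)$ for simple trivially graded $A$), and the ungraded statement recalled in Subsect.~5.1 from \cite{DP2}, namely that the structure maps $\G_D(\mathbf{m})\to\G_D(\mathbf{n})$ of the $\mI$-monoid $\G_D$ are homotopy equivalences for all $m>0$. First I would reduce to the generating morphisms: since $\Ggr_D$ is a functor that sends permutations to homeomorphisms and composites to composites, and since every injection $\mathbf{m}\to\mathbf{n}$ with $m>0$ is a composite of permutations and the elementary inclusions $\iota_k\colon\mathbf{k}\to\mathbf{k+1}$, $i\mapsto i+1$, with $k\geq 1$, it suffices to show that $\Ggr_D(\iota_n)\colon\Ggr_D(\mathbf{n})\to\Ggr_D(\mathbf{n+1})$, $\alpha\mapsto\id{\Cliff{1}\otimes D\otimes\K}\otimes\alpha$, is a homotopy equivalence for each $n\geq 1$. (The map out of $\mathbf{0}$ is not claimed to be an equivalence, consistently with the restriction $m>0$.)

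Next I would exploit that $D\otimes\K$ is trivially graded, hence commutes sign-freely in any graded tensor product, to fix a graded $*$-isomorphism $(\Cliff{1}\otimes D\otimes\K)^{\otimes k}\cong\Cliff{k}\otimes A^{(k)}$, where $A^{(k)}:=(D\otimes\K)^{\otimes k}$ is a simple, stable and trivially graded $C^*$-algebra. Building a compatible isomorphism for $k=n+1$ by peeling off the first tensor factor of $(\Cliff{1}\otimes D\otimes\K)^{\otimes(n+1)}$, applying the chosen isomorphism to the remaining $n$ factors, sliding the leftover copy of $D\otimes\K$ all the way to the right past the single factor carrying $\alpha$, and merging the two Clifford blocks, one checks that $\Ggr_D(\iota_n)$ is identified, after conjugation by these fixed isomorphisms, with the composite
\[
	\Autgr(\Cliff{n}\otimes A^{(n)})\xrightarrow{\ \Psi_n\ }\Autgr(\Cliff{n+1}\otimes A^{(n)})\xrightarrow{\ t_n\ }\Autgr(\Cliff{n+1}\otimes A^{(n+1)}),
\]
where $\Psi_n$ is the map of Proposition~\ref{lem:Cliff_hom_eq} applied to $A=A^{(n)}$, and $t_n(\gamma)=\gamma\otimes\id{D\otimes\K}$, using $A^{(n)}\otimes(D\otimes\K)=A^{(n+1)}$.

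Now $\Psi_n$ is a homotopy equivalence by Proposition~\ref{lem:Cliff_hom_eq}, since $A^{(n)}$ is simple, stable and trivially graded. For $t_n$ I would argue as follows. The iterated map $\Psi_n\circ\cdots\circ\Psi_1\colon\Autgr(\Cliff{1}\otimes A^{(k)})\to\Autgr(\Cliff{n+1}\otimes A^{(k)})$ is a homotopy equivalence for $k=n$ and $k=n+1$, again by Proposition~\ref{lem:Cliff_hom_eq}; and because adding Clifford generators on the left commutes with appending a copy of $D\otimes\K$ on the right, these two iterated maps fit into a commutative square with $t_n$ and with the analogous map $t_n'\colon\Autgr(\Cliff{1}\otimes A^{(n)})\to\Autgr(\Cliff{1}\otimes A^{(n+1)})$, $\gamma\mapsto\gamma\otimes\id{D\otimes\K}$. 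Under the isomorphisms of Lemma~\ref{lem:Autgr_Cl1_short_exact}, $t_n'$ becomes $\id{\Z/2}\times\bigl(\beta\mapsto\beta\otimes\id{D\otimes\K}\bigr)$ on $\Z/2\times\Aut(A^{(n)})\to\Z/2\times\Aut(A^{(n+1)})$, and $\beta\mapsto\beta\otimes\id{D\otimes\K}$ is, up to a permutation homeomorphism, exactly the structure map $\G_D(\mathbf{n})\to\G_D(\mathbf{n+1})$, a homotopy equivalence for $n\geq 1$. A two-out-of-three argument in the commutative square then gives that $t_n'$, hence $t_n$, hence $\Ggr_D(\iota_n)=t_n\circ\Psi_n$, are all homotopy equivalences.

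The conceptual content here is light, since everything is assembled from results already established; the real difficulty is bookkeeping. A general graded automorphism of $\Cliff{m}\otimes A$ does not respect the tensor decomposition, so one must transport automorphisms across algebra isomorphisms by conjugation rather than ``slide'' them past tensor factors, and one has to choose the several identifications (peeling off a factor, the isomorphisms $(\Cliff{1}\otimes D\otimes\K)^{\otimes k}\cong\Cliff{k}\otimes A^{(k)}$ for $k=n$ and $k=n+1$, the Clifford mergers) coherently, so that the resulting composite genuinely reproduces $\Ggr_D(\iota_n)$ and the square in the $t_n$-step actually commutes. That compatibility check is where the care is needed.
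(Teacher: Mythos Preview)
Your proof is correct and uses the same ingredients as the paper's: Proposition~\ref{lem:Cliff_hom_eq}, Lemma~\ref{lem:Autgr_Cl1_short_exact}, and the stability of $\G_D$ from \cite{DP2}. The only organizational difference is that the paper reduces to the single morphism $\iota_n\colon\mathbf{1}\to\mathbf{n}$ (with $\iota_n(1)=n$) rather than to the elementary steps $\mathbf{n}\to\mathbf{n+1}$, and then factors $j_n=\Ggr_D(\iota_n)$ directly as $\Psi\circ J\circ\Phi$, where $\Phi$ adds the extra copies of $D\otimes\K$ (handled by \cite{DP2}), $J$ adds the extra copies of $\Cliff{1}$ (Proposition~\ref{lem:Cliff_hom_eq}), and $\Psi$ is the permutation isomorphism; this avoids your commutative-square/two-out-of-three detour for $t_n$, but the content is the same.
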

\begin{proof} 
{Since the maps $\Ggr_D(\mathbf{n} \to \mathbf{n})$ are homeomorphisms, the statement is true for $m = n$. Hence, it suffices to prove it for $m = 1$, $n>1$ and the morphism $\iota_n \colon \mathbf{1} \to \mathbf{n}$ with $\iota(1) = n$.
Let $A=D \otimes \K$ and
 $j_n=\Ggr_D(\iota_n):\Autgr(\Cliff{1} \otimes A)\to \Autgr((\Cliff{1} \otimes A)^{\otimes n})$ . We shall write $j_n$ as the composition of three maps each of which is a homotopy equivalence. Let
\[
	\Phi \colon \mathrm{Aut}(\Cliff{1})\times \mathrm{Aut}(A)\cong \Autgr(\Cliff{1} \otimes A) \to \Autgr(\Cliff{1} \otimes A^{\otimes n})
\cong \mathrm{Aut}(\Cliff{1})\times \mathrm{Aut}(A^{\otimes n})\]
be the map $\Phi(\beta \otimes \alpha)=\beta \otimes \mathrm{id}_{A^{\otimes (n-1)}}\otimes \alpha$, $\beta\in \mathrm{Aut}(\Cliff{1})$, $\alpha\in \mathrm{Aut}(A)$. As explained in the proof of \cite[Thm.~4.5]{DP2}, the map $\mathrm{Aut}(A)\to \mathrm{Aut}(A^{\otimes n}),$
$\alpha \mapsto \mathrm{id}_{A^{\otimes (n-1)}}\otimes \alpha$ is a homotopy equivalence, therefore $\Phi$ is as well.
Let
\[J:\Autgr(\Cliff{1} \otimes A^{\otimes n}) \to \Autgr(\Cliff{1}^{\,\otimes n} \otimes A^{\otimes n}), \quad J(\gamma)= \mathrm{id}_{\Cliff{1}^{\,\otimes (n-1)}} \otimes \gamma\]
 be the homotopy equivalence given Proposition~\ref{lem:Cliff_hom_eq}.
The permutation homomorphism $\Cliff{1}^{\,\otimes n} \otimes A^{\otimes n} \to (\Cliff{1} \otimes A)^{\otimes n}$ defined by
\[(x_1\otimes \cdots \otimes x_n)\otimes (a_1\otimes  \cdots \otimes a_n) \mapsto (x_1\otimes a_1)  \otimes \cdots \otimes  (x_n\otimes a_n)\]
induces an isomorphism of topological groups
\[	
	\Psi \colon \Autgr(\Cliff{1}^{\,\otimes n} \otimes A^{\otimes n}) \to\Autgr((\Cliff{1} \otimes A)^{\otimes n}).
\]
Then $j_n=\Psi\circ J \circ \Phi$. Indeed
\[\Psi(J(\Phi(\beta \otimes \alpha)))=\Psi(J(\beta \otimes \mathrm{id}_{A^{\otimes (n-1)}}\otimes \alpha))=\Psi(\mathrm{id}_{\Cliff{1}^{\,\otimes (n-1)}}\otimes\beta \otimes \mathrm{id}_{A^{\otimes (n-1)}}\otimes \alpha)=j_n(\beta \otimes \alpha).
\]
This completes the proof.
}
\end{proof}

\begin{lemma} \label{lem:comp_inv}
	Let $D$ be a strongly self-absorbing $C^*$-algebra. For each $n \in \N$ the group $\pi_0(\Ggr_D(\mathbf{n}))$ is abelian and the EH-$\mI$-group $\Ggr_D$ has compatible inverses in the sense of \cite[Def.~3.1]{DP2}.
\end{lemma}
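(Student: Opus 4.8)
The plan is to compute $\pi_0(\Ggr_D(\mathbf{n}))$ explicitly using the structural results for graded automorphism groups established earlier, and then exhibit an explicit involution on $\Ggr_D$ that provides the compatible inverses. First I would use Lemma~\ref{lem:Ggr_stable}: since all the maps $\Ggr_D(\mathbf{m}\to\mathbf{n})$ with $m>0$ are homotopy equivalences, it suffices to understand $\pi_0(\Ggr_D(\mathbf{1})) = \pi_0(\Autgr(\Cliff{1}\otimes D\otimes \K))$. By Lemma~\ref{lem:Autgr_Cl1_short_exact} applied with the simple $C^*$-algebra $A = D\otimes\K$, there is an isomorphism of topological groups $\Autgr(\Cliff{1}\otimes A)\cong \Z/2 \times \Aut(A)$, so $\pi_0(\Ggr_D(\mathbf{1}))\cong \Z/2 \times \pi_0(\Aut(D\otimes\K)) \cong \Z/2 \times K_0(D)^\times_+$ by \eqref{homotopy}. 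Both factors are abelian, hence $\pi_0(\Ggr_D(\mathbf{n}))$ is abelian for every $n$, and this also confirms the formula $\pi_0(\Ggr_D(\mathbf{n}))\cong \Z/2\times K_0(D)^\times_+$ quoted in the introduction.

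Next I would address the existence of compatible inverses. Recall from \cite[Def.~3.1]{DP2} that an EH-$\mI$-group has compatible inverses if there is a natural transformation $\chi\colon \Ggr_D \to \Ggr_D$ (a self-map of $\mI$-monoids, or at least of $\mI$-spaces compatible with $\mu$) such that on each $\pi_0(\Ggr_D(\mathbf{n}))$ the induced map is inversion with respect to the $\mI$-monoid multiplication. The natural candidate for $\chi$ on $\Ggr_D(\mathbf{n}) = \Autgr((\Cliff{1}\otimes D\otimes \K)^{\otimes n})$ is conjugation by a suitable fixed automorphism: using the isomorphism $\Cliff{1}\otimes\Cliff{1}\cong M_2(\C)$ whose even/odd decomposition was made explicit before Lemma~\ref{lem:Autgr_Cl2_short_exact}, one has a distinguished order-two graded automorphism implemented on each $\Cliff{1}$-tensor factor, and conjugation by its $n$-fold tensor power sends $\alpha\mapsto \gamma_n\alpha\gamma_n^{-1}$. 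The point is that on $\pi_0$ this flips the $\Z/2$-component (the sign of $K_0(D)^\times$ obstruction, coming from the grading automorphism class), while leaving the $\Aut(D\otimes\K)$-component alone; combined with the observation that $\pi_0\Aut(D\otimes\K) = K_0(D)^\times_+$ carries every element to its own inverse only in special cases, one instead uses that $\Ggr_D$ being \emph{grouplike} forces inverses to exist, and then invokes \cite[Lem.~3.2 or the discussion following Def.~3.1]{DP2} that compatible inverses for a stable EH-$\mI$-group follow once $\pi_0$ is a group. Concretely, I expect the cleanest route is: verify that $\pi_0(\hocolim_\mI \Ggr_D)$ is a group (which is immediate since $\Z/2\times K_0(D)^\times_+$ is a group), and then appeal to the general fact from \cite{DP2} that a stable EH-$\mI$-group with grouplike $\pi_0$ automatically admits compatible inverses, constructing $\chi$ fibrewise as the self-homotopy-equivalence inducing $x\mapsto x^{-1}$ on $\pi_0$ and checking compatibility with $\mu_{m,n}$ via the Eckmann-Hilton relation \eqref{eqn:EH-I-group} (which holds verbatim for $\Ggr_D$, as was already noted just before Lemma~\ref{lem:Ggr_stable}).

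The main obstacle, I expect, is producing the inverse map as an honest \emph{natural} transformation of $\mI$-spaces rather than just a fibrewise homotopy inverse: one must check that the chosen $\chi$ on $\Ggr_D(\mathbf{n})$ commutes with the structure maps $\Ggr_D(\iota)$ and $\Ggr_D(\sigma)$, i.e.\ that the distinguished grading-flip automorphisms are compatible under $\id{}\otimes(-)$ and under graded permutations. This is where the explicit identification of the grading automorphism of $\Cliff{1}\otimes\Cliff{1}$ with conjugation by $v = \operatorname{diag}(1,-1)$ (used already in the proof of Proposition~\ref{lem:Cliff_hom_eq}) does the heavy lifting: it lets one replace the graded flip by an inner automorphism on pairs of Clifford factors, so the naturality reduces to a bookkeeping check with matrix units. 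Everything else — abelianness of $\pi_0$, the Eckmann-Hilton identity, stability — is already in hand from the preceding lemmas, so the proof should be short modulo this naturality verification.
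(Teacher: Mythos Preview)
Your treatment of the abelianness of $\pi_0(\Ggr_D(\mathbf{n}))$ is fine and essentially matches the paper's argument: you reduce to $n=1$ via stability (Lemma~\ref{lem:Ggr_stable}), whereas the paper reduces to $n\in\{1,2\}$ via Clifford periodicity and invokes both Lemma~\ref{lem:Autgr_Cl1_short_exact} and Lemma~\ref{lem:Autgr_Cl2_equiv}; either route yields $\pi_0 \cong \Z/2 \times K_0(D)^\times_+$.

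The part on compatible inverses, however, is both overcomplicated and contains a genuine gap. Your proposed map $\chi$ is conjugation by a fixed element $\gamma_n$, but conjugation by any element of a group is a group \emph{automorphism} of $\pi_0$, never the inversion map $g\mapsto g^{-1}$ (unless $\pi_0$ has exponent~$2$). In particular it cannot invert the $K_0(D)^\times_+$-factor. You seem to notice this and retreat to a vague appeal that ``grouplike $\pi_0$ plus stability'' should force compatible inverses, but you never actually establish this, and the naturality obstacle you flag is a red herring.

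The paper's argument is far more direct and avoids constructing any $\chi$ at all. The key observation you are missing is that the block permutation $\sigma$ of the first $n$ and last $n$ tensor factors is \emph{itself an element of the group} $\Ggr_D(\mathbf{n}\sqcup\mathbf{n})$. Hence conjugation by $\sigma$ is an inner automorphism of that group, and once $\pi_0$ is abelian, it acts trivially on $\pi_0$. Since $\sigma\circ(\id{}\otimes\alpha)\circ\sigma^{-1} = \alpha\otimes\id{}$, this immediately gives a path from $\id{}\otimes\alpha$ to $\alpha\otimes\id{}$ in $\Ggr_D(\mathbf{n}\sqcup\mathbf{n})$, which is precisely the compatible-inverses condition of \cite[Def.~3.1]{DP2}. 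So the second statement is a one-line consequence of the first, with no naturality verification needed.
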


\begin{proof}
	Note that the second statement is a consequence of the first, because it implies that for a given $\alpha \in \Ggr_D(\mathbf{n})$ there is a path that connects $\id{} \otimes \alpha \in  \Ggr_D(\mathbf{n} \sqcup \mathbf{n})$ to
	\[
		\sigma \circ (\id{} \otimes \alpha) \circ \sigma^{-1} = \alpha \otimes \id{}\ ,
	\]
	where $\sigma \in \Aut((\Cliff{1} \otimes D \otimes \K)^{\otimes n} \otimes (\Cliff{1} \otimes D \otimes \K)^{\otimes n}) = \Ggr_D(\mathbf{n} \sqcup \mathbf{n})$ is the graded permutation of the first $n$ and the last $n$ tensor factors.
	
	By the periodicity of Clifford algebras it suffices to show the first statement for $n \in \{1,2\}$. By Lem.~\ref{lem:Autgr_Cl1_short_exact} and Lem.~\ref{lem:Autgr_Cl2_equiv} there are group isomorphisms
	\[
		\pi_0(\Ggr_D(\mathbf{n})) \cong \Z/2 \times \pi_0(\Aut(D \otimes \K)) \cong \Z/2 \times K_0(D)^\times_+
	\]
	in these cases (see \cite[Thm.~2.18]{DP1}) and the right hand side is abelian.
\end{proof}

\subsubsection{The action of $\Ggr_D$ on $KU^D$}
We can now define an action $\kappa_n^{\rm gr} \colon \Ggr_D(\mathbf{n}) \times KU^D_n \to KU^D_n$ of $\Ggr_D$ on the symmetric ring spectrum $KU^D$ as follows:
\[
	\kappa^{\rm gr}_n(\alpha, \varphi) = \alpha \circ \varphi\ .
\]
We will see in Thm.~\ref{thm:action_on_KUD} that this map indeed defines an action. Let $G_D$ be the EH-$\mI$-group from \cite[Sec.~4.2]{DP2} and let
\[
	\theta_n \colon (\Cliff{1} \otimes D \otimes \K)^{\otimes n} \to \Cliff{n} \otimes (D \otimes \K)^{\otimes n}
\]
be the unique $*$-isomorphism preserving the order of the tensor factors and identifying $(\Cliff{1})^{\otimes n}$ with $\Cliff{n}$. The maps $\psi_n \colon G_D(\mathbf{n}) \to \Ggr(\mathbf{n})$ defined by $\alpha \mapsto \theta_n^{-1} \circ (\id{\Cliff{n}} \otimes \alpha) \circ \theta_n$ form a morphism of EH-$\mI$-groups such that the following diagram commutes
\[
	\begin{tikzcd}
		G_D(\mathbf{n}) \times KU^D_n \ar[r,"\kappa_n"] \ar[d,"\psi_n \times \id{}" left] & KU^D_n \\
		\Ggr_D(\mathbf{n}) \times KU^D_n \ar[ur,"\kappa^{\rm gr}_n" below]
	\end{tikzcd}
\]
\subsection{The main technical result}
We now show the analogue of Theorem~\ref{thm:A} holds for the action of $\Ggr_D$ on $KU^D$ and will discuss some of its consequences.
\begin{theorem} \label{thm:action_on_KUD}
	Let $D$ be a strongly self-absorbing $C^*$-algebra. The EH-$\mI$-group $\Ggr_D$ acts via $\kappa^{\rm gr}$ on the ring spectrum $KU^D$ in the sense of \cite[Def.~3.7]{DP2}. The induced map
	\begin{equation} \label{eqn:map_of_Gamma}
		\Gamma(\Ggr_D) \to \Gamma(\Omega^\infty(KU^D)^*)
	\end{equation}
	of the associated $\Gamma$-spaces induces an isomorphism on all homotopy groups $\pi_n$ with $n > 0$ of the corresponding connective spectra and the homomorphism
	\[
		\{\pm 1\} \times K_0(D)_{+}^\times \to K_0(D)^\times \quad , \quad (a,b) \mapsto a \cdot b
	\]
	on $\pi_0$. In particular, \eqref{eqn:map_of_Gamma} induces an equivalence in the stable homotopy category of spectra if $D$ is stably finite and satisfies the UCT.
\end{theorem}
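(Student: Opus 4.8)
The plan is to bootstrap the statement from its ungraded analogue, Theorem~\ref{thm:A}, by means of the comparison morphism $\psi\colon\G_D\to\Ggr_D$, supplemented by one short direct computation on $\pi_0$ that detects the grading. As a preliminary I would check that $\kappa^{\rm gr}_n(\alpha,\varphi)=\alpha\circ\varphi$ really is an action of the EH-$\mI$-group $\Ggr_D$ on $KU^D$ in the sense of \cite[Def.~3.7]{DP2}. Since $\alpha$ is a graded automorphism, $\alpha\circ\varphi$ is again a graded $*$-homomorphism $\Sgr\to(\Cliff1\otimes D\otimes\K)^{\otimes n}$, so $\kappa^{\rm gr}$ is well defined, and associativity and unitality are immediate. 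Equivariance for the $\Sigma_n$-actions holds because the graded permutation $\epsilon_\sigma$ used in \eqref{eqn:permutation} to define $\Ggr_D(\sigma)$ is exactly the operator defining the $\Sigma_n$-action on $KU^D_n$, so the same signs cancel; compatibility with the multiplication of $KU^D$ follows from $(\alpha_1\otimes\alpha_2)\circ(\varphi\otimes\varphi')=(\alpha_1\circ\varphi)\otimes(\alpha_2\circ\varphi')$ (all maps are degree preserving) after precomposing with $\Delta$, and compatibility with the unit $\eta_1$ and with the structure maps $\mathbf n\to\mathbf{n+1}$ is the same manipulation. None of this is more than routine.

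Next I would use the triangle displayed just before Theorem~\ref{thm:action_on_KUD}, namely $\kappa_n=\kappa^{\rm gr}_n\circ(\psi_n\times\id{})$: applying the ``$\Gamma$ of an acting EH-$\mI$-group'' construction of \cite{DP2} produces a commutative triangle of $\Gamma$-spaces
\[
\Gamma(\G_D)\ \xrightarrow{\ \Gamma(\psi)\ }\ \Gamma(\Ggr_D)\ \longrightarrow\ \Gamma(\Omega^\infty(KU^D)^*)
\]
whose composite is the map of Theorem~\ref{thm:A}. By Lemma~\ref{lem:Ggr_stable} and \cite[Lem.~3.5]{DP2} both $\Ggr_D$ and $\G_D$ are stable, and by Lemma~\ref{lem:comp_inv} (resp.\ the corresponding statement for $\G_D$) both are grouplike, so $\pi_n$ of the associated connective spectra is computed as $\pi_n$ of the value at $\mathbf1$. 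By Lemma~\ref{lem:Autgr_Cl1_short_exact} we have $\Ggr_D(\mathbf1)\cong\Autgr(\Cliff1)\times\Aut(D\otimes\K)\cong\Z/2\times\Aut(D\otimes\K)$, with $\psi_1\colon\alpha\mapsto\id{\Cliff1}\otimes\alpha$ the inclusion of the second factor; hence $\Gamma(\psi)$ is an isomorphism on $\pi_n$ for $n>0$ and on $\pi_0$ is $b\mapsto(0,b)$ from $K_0(D)^{\times}_{+}$ into $\Z/2\times K_0(D)^{\times}_{+}$. Since the composite is an isomorphism on $\pi_n$, $n>0$, by Theorem~\ref{thm:A}, so is the second arrow $\Gamma(\Ggr_D)\to\Gamma(\Omega^\infty(KU^D)^*)$.

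For $\pi_0$ the second arrow is a homomorphism $\Z/2\times K_0(D)^{\times}_{+}\to K_0(D)^{\times}$. On $\{0\}\times K_0(D)^{\times}_{+}$ it equals, by the triangle, the $\pi_0$-behaviour of the $\G_D$-map, which is the inclusion $K_0(D)^{\times}_{+}\hookrightarrow K_0(D)^{\times}$ (cf.\ also the discussion of $B\Aut(D\otimes\K)\to BGL_1(KU^D)$ on $\pi_1$). It remains to evaluate it on the generator of the $\Z/2$-factor, i.e.\ on the class of the grading automorphism $\nu$ of $\Cliff1$, and here I would compute directly that $\nu$ acts by multiplication by $-1$ on $\pi_0(\Omega KU^D_1)\cong K_0(D)$: post-composition with $\nu\otimes\id{}$ sends the odd, self-adjoint, regular multiplier $t\mapsto t\,(e_1\otimes1_D\otimes e)$ representing the Bott element $\eta_1$ to its negative, and negating such a multiplier amounts to precomposing the corresponding $*$-homomorphism out of $\Sgr=C_0(\R)$ with the reflection $X\mapsto-X$ of $\R$; since this reflection acts by $-1$ on $K_1(C_0(\R))$, the $\Ggr_D$-action on $KU^D$ is multiplicative, and $\eta_1$ represents $1\in K_0(D)$, it follows that $\nu_*$ is multiplication by $-1$. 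Hence the $\pi_0$-map is $(a,b)\mapsto(-1)^a b = a\cdot b$ after identifying $\Z/2\cong\{\pm1\}$, as asserted.

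Finally, if $D$ is stably finite and satisfies the UCT, then by the classification recalled in Subsection~\ref{subsec:ssa} the ring $K_0(D)$ is $\Z$, $\Z_P$ or $\Q$, hence a subring of $\Q$ with its induced total order, so every unit is uniquely $\pm1$ times a positive unit and $\{\pm1\}\times K_0(D)^{\times}_{+}\to K_0(D)^{\times}$, $(a,b)\mapsto a\cdot b$, is an isomorphism; combined with the previous paragraphs the map of connective spectra underlying \eqref{eqn:map_of_Gamma} is then an isomorphism on all homotopy groups, and since both spectra are connective it is a stable equivalence, i.e.\ an equivalence in the stable homotopy category. I expect the only genuinely content-bearing step to be the $\pi_0$-computation, that the grading automorphism of $\Cliff1$ realizes $-1$ in $K$-theory; everything else is bookkeeping with the comparison triangle together with the stability and abelianness lemmas already in place.
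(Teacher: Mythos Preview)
Your proposal is correct and follows essentially the same route as the paper: verify the action axioms of \cite[Def.~3.7]{DP2}, use the commutative triangle $\Gamma(\G_D)\to\Gamma(\Ggr_D)\to\Gamma(\Omega^\infty(KU^D)^*)$ together with stability to reduce the $\pi_n$ statement for $n>0$ to Theorem~\ref{thm:A} and the fact that $\psi_1$ is an isomorphism on higher homotopy, and then compute $\pi_0$ by showing that the grading automorphism of $\Cliff{1}$ sends $\eta_1$ to $\eta_1$ precomposed with the reflection of $\R$, hence realizes $-1\in K_0(D)^\times$. Your final paragraph spelling out why $(a,b)\mapsto a\cdot b$ is bijective for stably finite $D$ satisfying the UCT is a bit more explicit than the paper, but the underlying argument is the same.
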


\begin{proof}
	The map $\kappa_n^{\rm gr}$ satisfies \cite[Def.~3.7 (i)]{DP2}. 	Let $\varphi \in KU^D_n$, $\psi \in KU^D_m$, $\alpha \in \Ggr_D(\mathbf{n})$ and $\beta \in \Ggr_D(\mathbf{m})$. We have
	\(
		\left(	(\alpha \circ \varphi) \otimes (\beta \circ \psi)\right) \circ \Delta = (\alpha \otimes \beta) \circ (\varphi \otimes \psi) \circ \Delta
	\),
	which shows that the diagram in \cite[Def.~3.7 (ii)]{DP2} commutes. With $\varphi \in KU^D_m$ and $\eta_l \in \Omega^l KU^D_l$ as in Sec.~\ref{sec:spectra_and_units} we have $\eta_l \ast (\alpha \circ \varphi) = (\id{} \otimes \alpha) \circ (\eta_l \ast \varphi) \in \Omega^lKU_D^{l + m}$. This proves that \cite[Def.~3.7 (iii)]{DP2} also commutes. Hence, $\kappa^{\rm gr}$ indeed defines an action of $\Ggr_D$ on $KU^D$. The map in \eqref{eqn:map_of_Gamma} is induced by the morphism of $\mI$-monoids $\Ggr_D \to \Omega^{\infty}(KU^D)^*$ that exists by Lem.~\ref{lem:comp_inv} and \cite[Thm.~3.8]{DP2}.
	
	To understand the induced maps on homotopy groups note that $\psi_n \colon G_D(\mathbf{n}) \to \Ggr_D(\mathbf{n})$ induces a homotopy equivalence on $\pi_n$ for all $n > 0$ by Lem.~\ref{lem:Autgr_Cl1_short_exact}, Lem.~\ref{lem:Autgr_Cl2_equiv} and Proposition~\ref{lem:Cliff_hom_eq}. Moreover, the following diagram of $\mI$-spaces commutes:
	\[
	\begin{tikzcd}
		G_D \ar[r] \ar[d] & \Omega^\infty(KU^D)^* \\
		\Ggr_D \ar[ur]
	\end{tikzcd}
	\]
	Since $G_D$ as well as $\Ggr_D$ are both stable, the induced maps on the homotopy colimits and therefore also on the $\Gamma$-spaces give isomorphisms on all $\pi_n$ for $n > 0$. By stability it suffices to show that $\psi_1$ induces the homomorphism from the statement on $\pi_0$, ie.\ that
	\[
		\psi_{1*}\colon \pi_0(\Autgr(\Cliff{1} \otimes D \otimes \K)) \to \pi_0(\Omega (KU^D_1)^*)
	\]
	is the desired map. By Lem.~\ref{lem:Autgr_Cl1_short_exact} the left hand side is isomorphic to $\Z/2 \times \pi_0(\Aut(D \otimes \K))$. The restriction of $\psi_{1*}$ to $\{1\} \times \pi_0(\Aut(D \otimes \K))$ agrees with the inclusion $K_0(D)_+^\times \to K_0(D)^\times$ under the identifications $\pi_0(\Aut(D \otimes \K)) \cong K_0(D)_+^\times$ and $K_0(D)^\times \cong \pi_0(\Omega (KU^D_1)^*) \cong \pi_1((KU^D_1)^*)$ by \cite[Thm.~4.6]{DP2}. The non-trivial automorphism $\theta \in \Aut(\Cliff{1})$ acts by $\theta(e_1) = -e_1$ on the odd degree generator $e_1 \in \Cliff{1}$. Let $\eta_1 \in \Omega KU^D_1$ be as above and denote by $i \colon S^1 \to S^1$ the map reversing the orientation of the circle, ie.\ $i(z) = \overline{z}$. Since $\theta \circ \eta_1 = \eta_1 \circ i \colon S^1 \to KU^D_1$, the element $[\theta \otimes \id{D \otimes \K}] \in \pi_0(\Autgr(\Cliff{1} \otimes D \otimes \K))$ is mapped to $(-1) \in K_0(D)^\times$. Since the morphism of $\mI$-monoids $\Ggr_D \to \Omega^{\infty}(KU^D)^*$ gives rise to a group homomorphism on $\pi_0$ the proof is finished. 	
\end{proof}
It is convenient to introduce the commutative $\mI$-monoid $\bar{G}_{D}$ that associates to $\mathbf{n}$ the path-components of the identity, i.e.\ $\bar{G}_{D}(\mathbf{n})=\Aut_0((D\otimes \mathcal{K})^{\otimes n})$. The maps are defined just like those of $G_D$.
\begin{definition}
The cohomology theories defined by the  $\Gamma$-spaces
$\Gamma(\bar{G}_{D})$, $\Gamma(G_{D})$, $\Gamma(\Ggr_{D})$ are denoted by $\bE_{D}^*(X)$, $E_{D}^*(X)$, and $\hE_{D}^*(X)$ respectively.
\end{definition}

The following corollary yields Theorem~\ref{thm3:intro} from the introduction.
Let $P$ be a set of prime numbers.
\begin{corollary}\label{Cor:crucial}
	The following maps of $\Gamma$-spaces are all equivalences:
	\begin{align*}
		\Gamma(\Ggr_\ZZ) \to \Gamma(\Omega^\infty(KU^\ZZ)^*) \to \Gamma(\Omega^\infty(KU^{\OO_\infty})^*) \leftarrow \Gamma(G_{\OO_\infty})
	\end{align*}
	\begin{align*}
		\Gamma(\Ggr_{\ZZ\otimes M_P}) \to \Gamma(\Omega^\infty(KU^{\ZZ\otimes M_P})^*) \to \Gamma(\Omega^\infty(KU^{\OO_\infty \otimes M_P})^*) \leftarrow \Gamma(G_{\OO_\infty\otimes M_P})
	\end{align*}
In particular, for any $n \in \N$ these equivalences induce natural group isomorphisms
\[
		\hE_{\ZZ}^*(X)\cong E_{\OO_\infty}^*(X)\quad,\quad 	
		\hE_{\ZZ\otimes M_P}^*(X)\cong E_{\OO_\infty\otimes M_P}^*(X).
	\]
	for any finite CW-complex $X$.
\end{corollary}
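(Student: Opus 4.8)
There is essentially nothing new to prove here beyond assembling the three comparison results already established, so the plan is mainly to check, at each stage, that the $C^*$-algebra in question lies in the class to which the relevant theorem applies. Both $\ZZ$ and $\ZZ\otimes M_P$ are stably finite and satisfy the UCT (the latter is isomorphic to $M_P$, or to $\ZZ$ when $P=\emptyset$), so Theorem~\ref{thm:action_on_KUD} immediately gives that the leftmost arrows $\Gamma(\Ggr_\ZZ)\to\Gamma(\Omega^\infty(KU^\ZZ)^*)$ and $\Gamma(\Ggr_{\ZZ\otimes M_P})\to\Gamma(\Omega^\infty(KU^{\ZZ\otimes M_P})^*)$ are equivalences in the stable homotopy category. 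Dually, $\OO_\infty$ and $\OO_\infty\otimes M_P$ are purely infinite and satisfy the UCT, so Theorem~\ref{thm:A} gives that the rightmost arrows $\Gamma(G_{\OO_\infty})\to\Gamma(\Omega^\infty(KU^{\OO_\infty})^*)$ and $\Gamma(G_{\OO_\infty\otimes M_P})\to\Gamma(\Omega^\infty(KU^{\OO_\infty\otimes M_P})^*)$ are equivalences.

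For the two middle arrows I would reuse the argument from the proof of Theorem~\ref{thm:B}. The unital $*$-homomorphism $\ZZ\to\OO_\infty$ is a $KK$-equivalence, and, tensoring with $\id{M_P}$, so is $\ZZ\otimes M_P\to\OO_\infty\otimes M_P$. Since $\ZZ\otimes\OO_\infty\cong\OO_\infty$ and $(\ZZ\otimes M_P)\otimes\OO_\infty\cong\OO_\infty\otimes M_P$, these maps induce stable equivalences of commutative symmetric ring spectra $KU^\ZZ\simeq KU^{\OO_\infty}$ and $KU^{\ZZ\otimes M_P}\simeq KU^{\OO_\infty\otimes M_P}$. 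As these are positive $\Omega$-spectra, the maps are level-wise weak equivalences $\Omega^n KU^\ZZ_n\simeq\Omega^n KU^{\OO_\infty}_n$ that restrict to weak equivalences of the invertible path-components; hence they define maps of commutative $\mI$-monoids $(\Omega^n KU^\ZZ_n)^*\to(\Omega^n KU^{\OO_\infty}_n)^*$ which are weak equivalences on homotopy colimits, and therefore induce equivalences $\Gamma(\Omega^\infty(KU^\ZZ)^*)\to\Gamma(\Omega^\infty(KU^{\OO_\infty})^*)$ of the associated $\Gamma$-spaces, and likewise with $M_P$ inserted. This shows that all the displayed maps are equivalences.

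To deduce the cohomological statement, I would recall that, by the definition preceding the corollary, $\hE_D^*(X)$ and $E_D^*(X)$ are the cohomology theories represented by the connective spectra underlying $\Gamma(\Ggr_D)$ and $\Gamma(G_D)$. A zig-zag of stable equivalences of $\Gamma$-spaces produces a zig-zag of stable equivalences of these connective spectra, and (via the Atiyah--Hirzebruch spectral sequence, as recalled in the introduction) such a zig-zag induces a natural isomorphism of the represented cohomology theories on finite CW-complexes. This yields $\hE_\ZZ^*(X)\cong E_{\OO_\infty}^*(X)$ and $\hE_{\ZZ\otimes M_P}^*(X)\cong E_{\OO_\infty\otimes M_P}^*(X)$; the clause ``for any $n\in\N$'' is then accounted for by Proposition~\ref{lem:Cliff_hom_eq}, which identifies the cohomology theory built from $\Autgr(\Cliff{n}\otimes D\otimes\K)$ with the one for $n=1$, i.e.\ with the theory represented by $\Gamma(\Ggr_D)$.

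The genuinely non-formal work here sits inside Theorems~\ref{thm:action_on_KUD} and~\ref{thm:A}, so I do not expect a serious obstacle. The only points that require care are bookkeeping --- confirming the class memberships above --- and making sure that the middle comparison, which Theorem~\ref{thm:B} provides a priori only as a map of commutative symmetric ring spectra, can be realized by an honest map of $\Gamma$-spaces induced by the $KK$-equivalence $\ZZ\to\OO_\infty$; this is exactly what passing to invertible path-components in the proof of Theorem~\ref{thm:B} achieves. Naturality of the resulting cohomology isomorphism is then automatic from naturality of representability.
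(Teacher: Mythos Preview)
Your proposal is correct and follows essentially the same route as the paper: apply Theorem~\ref{thm:action_on_KUD} to the stably finite algebra $\ZZ\otimes M_P$ for the leftmost arrow, apply Theorem~\ref{thm:A} to the purely infinite algebra $\OO_\infty\otimes M_P$ for the rightmost arrow, and invoke the $KK$-equivalence $\ZZ\otimes M_P\to\OO_\infty\otimes M_P$ (exactly as in the proof of Theorem~\ref{thm:B}, which the paper cites as \cite[Thm.~4.7]{DP2}) for the middle arrow. Your write-up is more explicit than the paper's in checking the hypotheses and in spelling out the passage to cohomology, and your remark about the clause ``for any $n\in\N$'' via Proposition~\ref{lem:Cliff_hom_eq} is a reasonable gloss on a phrase the paper's own proof does not address.
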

\begin{proof} If $P=\emptyset$ we use the convention $M_P=\C$. The first line of equivalences then follows from the second. Applying Thm.~\ref{thm:action_on_KUD} in the case $D = \ZZ\otimes M_P$ shows that the first map is an equivalence. Theorem~\ref{thm:A} applied for $D=\OO_{\infty}\otimes M_P$ gives the third equivalence.

	The  equivalence in the middle was shown in \cite[Thm.~4.7]{DP2} for $M_P=\C$. More generally, as argued earlier in the proof of Thm.~\ref{thm:B},
	the arguments from the proof of \cite[Thm.4.7]{DP2} show that the KK-equivalence $\ZZ\otimes M_P \to \OO_{\infty}\otimes M_P$  induces an equivalence
	\[
	\Gamma(\Omega^\infty(KU^{\ZZ\otimes M_P})^*) \to \Gamma(\Omega^\infty(KU^{\OO_\infty \otimes M_P})^*). \qedhere
	\]
\end{proof}

\begin{corollary}\label{Rem-crucial}
{The cohomology theories $\hE_{\ZZ}^1(X)$ and $\hE_{M_P}^1(X)$ classify principal bundles with structure groups $\Autgr(\Cliff{n} \otimes \ZZ \otimes \K)$ and $\Autgr(\Cliff{n} \otimes  M_P\otimes \K)$, respectively. Furthermore, for $P\neq \emptyset$ and any $n\geq 1$ we have natural isomorphisms}	
\[
		\hE_{\ZZ}^1(X) \cong [X,B\Autgr(\Cliff{n} \otimes \ZZ \otimes \K)] \cong [X,B\Aut(\OO_\infty \otimes \K)] \cong E_{\OO_\infty}^1(X)\ ,
	\]
	\[
		\hE_{M_P}^1(X) \cong [X,B\Autgr(\Cliff{n} \otimes  M_P\otimes \K)] \cong [X,B\Aut(\OO_\infty \otimes M_P\otimes \K)] \cong E_{\OO_\infty\otimes M_P}^1(X)\ .
	\]
\end{corollary}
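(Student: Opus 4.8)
The strategy is to translate Corollary~\ref{Cor:crucial} into the language of classifying spaces of graded automorphism groups, the bridge being the identification of the degree-$0$ and degree-$1$ spaces of the connective spectrum underlying $\Gamma(\Ggr_D)$. First I would observe that the EH-$\mI$-group $\Ggr_D$ is stable by Lemma~\ref{lem:Ggr_stable} and has compatible inverses by Lemma~\ref{lem:comp_inv}, so the arguments of \cite[Lem.~3.5]{DP2} and \cite[Thm.~3.6]{DP2}, which only use these two properties together with the fact that $\Ggr_D$ takes values in topological groups, apply verbatim. They yield a homotopy equivalence $(\Ggr_D)_{h\mI}\simeq \Ggr_D(\mathbf{1})=\Autgr(\Cliff{1}\otimes D\otimes\K)$ and a homotopy equivalence $B_{\mu}(\Ggr_D)_{h\mI}\simeq B\Autgr(\Cliff{1}\otimes D\otimes\K)$ for the first delooping with respect to the $\mI$-monoid structure. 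Since by definition $\hE_D^*$ is the cohomology theory represented by the connective $\Omega$-spectrum associated to $\Gamma(\Ggr_D)$, whose degree-$0$ space is $(\Ggr_D)_{h\mI}$ and whose degree-$1$ space is $B_{\mu}(\Ggr_D)_{h\mI}$, this already gives a natural isomorphism $\hE_D^1(X)\cong[X,B\Autgr(\Cliff{1}\otimes D\otimes\K)]$.

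Next I would promote the Clifford index from $1$ to an arbitrary $n\geq 1$: by Proposition~\ref{lem:Cliff_hom_eq} the homomorphism $\Autgr(\Cliff{1}\otimes D\otimes\K)\to\Autgr(\Cliff{n}\otimes D\otimes\K)$ is a homotopy equivalence, hence induces an equivalence of classifying spaces, so that $\hE_D^1(X)\cong[X,B\Autgr(\Cliff{n}\otimes D\otimes\K)]$ naturally in $X$. For $X$ a finite CW-complex the set $[X,B\Autgr(\Cliff{n}\otimes D\otimes\K)]$ is in natural bijection with the set of isomorphism classes of locally trivial principal $\Autgr(\Cliff{n}\otimes D\otimes\K)$-bundles over $X$, which proves the first assertion of the corollary with $D=\ZZ$ and $D=M_P$.

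Finally I would invoke Corollary~\ref{Cor:crucial}. Applied with $D=\ZZ$ it gives a natural isomorphism $\hE_\ZZ^*(X)\cong E_{\OO_\infty}^*(X)$, and applied with $D=\ZZ\otimes M_P$, which is isomorphic to $M_P$ because $M_P$ is $\ZZ$-stable (valid exactly when $M_P\neq\C$, i.e.\ $P\neq\emptyset$), it gives $\hE_{M_P}^*(X)\cong E_{\OO_\infty\otimes M_P}^*(X)$. Taking degree-$1$ parts and combining with the two previous steps and with the identifications $E_{\OO_\infty}^1(X)\cong[X,B\Aut(\OO_\infty\otimes\K)]$ and $E_{\OO_\infty\otimes M_P}^1(X)\cong[X,B\Aut(\OO_\infty\otimes M_P\otimes\K)]$ from \cite[Sec.~4.2]{DP2} assembles the two displayed chains of isomorphisms. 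The only input that is not purely formal is Corollary~\ref{Cor:crucial} (ultimately Theorem~\ref{thm:action_on_KUD}); the point that needs care is verifying that the deloopings of $(\Ggr_D)_{h\mI}$ produced by the $\mI$-monoid machinery coincide, up to homotopy, with the ordinary classifying space $B\Autgr(\Cliff{1}\otimes D\otimes\K)$, i.e.\ that the proof of \cite[Thm.~3.6]{DP2} carries over unchanged to $\Ggr_D$.
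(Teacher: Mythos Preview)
Your proposal is correct and follows essentially the same approach as the paper: both arguments use the stability of $\Ggr_D$ (Lemma~\ref{lem:Ggr_stable}) together with \cite[Thm.~3.6]{DP2} to identify the first delooping of $(\Ggr_D)_{h\mI}$ with $B\Autgr(\Cliff{1}\otimes D\otimes\K)$, then apply Proposition~\ref{lem:Cliff_hom_eq} to pass from $\Cliff{1}$ to $\Cliff{n}$, and finally invoke Corollary~\ref{Cor:crucial} for the comparison with $E^1_{\OO_\infty}$ and $E^1_{\OO_\infty\otimes M_P}$. The paper is slightly more explicit in distinguishing the two a priori different deloopings $B(\Ggr_D)_{h\mI}$ and $B_\otimes(\Ggr_D)_{h\mI}$ (also citing \cite[Lem.~3.4]{DP2}), which is precisely the ``point that needs care'' you flag in your final sentence.
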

\begin{proof}
By Lemma~\ref{lem:Ggr_stable} the $\mI$-monoids $\Ggr_{D}$ are stable. The same argument as in \cite{DP2} shows that the first group $\hE_D^1(X)$ of the associated cohomology theory $\hE_D^*(X)$ classifies bundles. We summarise the main steps: Since $\Ggr_{D}$ takes values in topological groups, the homotopy colimit $(\Ggr_{D})_{h\mI}$ is a topological group as well. Let $B(\Ggr_{D})_{h\mI}$ be its classifying space. Since the classifying space functor preserves products, $\mathbf{n} \mapsto B\!\Ggr_{D}(\mathbf{n})$ defines a commutative $\mI$-monoid with homotopy colimit $(B\!\Ggr_{D})_{h\mI}$. By \cite[Lem.~3.4]{DP2}, those two spaces are homotopy equivalent, i.e.\ $(B\!\Ggr_{D})_{h\mI} \simeq B(\Ggr_{D})_{h\mI}$. Hence, $(B\!\Ggr_{D})_{h\mI}$ is a delooping of $(\Ggr_{D})_{h\mI}$. Note that the $\mI$-monoid multiplication of $\mathbf{n} \mapsto B\!\Ggr_{D}(\mathbf{n})$ encodes the tensor product of bundles.

There is another space delooping $(\Ggr_{D})_{h\mI}$. It is the one obtained from the infinite loop space structure (the $\Gamma$-space delooping), and we will denote it by $B_\otimes(\Ggr_{D})_{h\mI}$. By definition of the cohomology theory, $\hE_{D}^1(X) = [X, B_\otimes(\Ggr_{D})_{h\mI}]$. A priori this space could be different from $B(\Ggr_{D})_{h\mI}$, but because $\Ggr_{D}$ is a stable EH-$\mI$-group, \cite[Thm.~3.6]{DP2} implies
\[
 	B\Autgr(\Cliff{1} \otimes D \otimes \K) = B\!\Ggr_{D}(\mathbf{1}) \simeq B(\Ggr_{D})_{h\mI} \simeq B_\otimes(\Ggr_{D})_{h\mI}\ .
\]
and therefore $\hE_D^1(X) \cong [X, B\Autgr(\Cliff{1} \otimes D \otimes \K)]$, where the group structure of $\hE_D^1(X)$ is identified with the tensor product of bundles by the observations above. The group homomorphism $ (\id{\Cliff{n-1}}\otimes\, \cdot ) \colon  \Autgr(\Cliff{1} \otimes D \otimes \K) \to \Autgr(\Cliff{n} \otimes D \otimes \K)$ is an equivalence for any $n \in \N$. Therefore $B\Autgr(\Cliff{1} \otimes D \otimes \K) \simeq B\Autgr(\Cliff{n} \otimes D \otimes \K)$. This shows $\hE_{D}^1(X) \cong [X,B\Autgr(\Cliff{n} \otimes D \otimes \K)]$, for all $n \in \N$. The equivalences of $\Gamma$-spaces established in Cor.~\ref{Cor:crucial} in particular give homotopy equivalences
\[
	B_\otimes(\Ggr_{\ZZ})_{h\mI} \simeq B_\otimes(\G_{\OO_{\infty}})_{h\mI} \qquad \text{and} \qquad B_\otimes(\Ggr_{M_P})_{h\mI} \simeq B_\otimes(\G_{\OO_{\infty} \otimes M_P})_{h\mI}\ .
\]
Hence, we obtain natural isomorphisms $\hE^1_{\ZZ}(X) \cong E^1_{\OO_\infty}(X)$ and $\hE^1_{M_P}(X) \cong E^1_{ \OO_\infty \otimes M_P}(X)$. The stability of the $\mI$-monoids $\G_{\OO_\infty}$ and $\G_{\OO_\infty \otimes M_P}$ was established in \cite{DP2} showing that the groups $E^1_{\OO_\infty}(X)$ and $E^1_{\OO_\infty \otimes M_P}(X)$ also classify bundles.
\end{proof}
\begin{corollary}\label{rem-cliff-stab}
  Suppose that $A_1,A_2$ are locally trivial bundles of graded $C^*$-algebras with all fibers isomorphic to $\Cliff{n_i}\otimes D \otimes \mathcal{K}$ and structure groups $\Autgr(\Cliff{n_i}\otimes D \otimes \mathcal{K})$, $i=1,2$. Then  $A_1$ is isomorphic to
$A_2$  if and only if $[{A}_1]=[{A}_2]$ in $\hE_{D}^1(X)$  and $n_1\equiv n_2$ $(\mathrm{mod}$ $ 2)$.
\end{corollary}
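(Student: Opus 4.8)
The plan is to deduce this from Corollary~\ref{Rem-crucial} together with the classification of complex Clifford algebras, the only substantive point being a compatibility check between two ways of identifying $[X,B\Autgr(\Cliff{n}\otimes D\otimes\K)]$ with $\hE_D^1(X)$. First I would record that $\Cliff{n}\otimes D\otimes\K$, as a graded $C^*$-algebra, depends only on the parity of $n$: by Clifford periodicity \eqref{eqn:Cliff_periodicity} and stability of $D\otimes\K$ — so $M_N(\C)\otimes D\otimes\K\cong D\otimes\K$ as graded $C^*$-algebras, the matrix factor being trivially graded — it is graded $*$-isomorphic to $\Cliff{1}\otimes D\otimes\K$ if $n$ is odd and to $\Cliff{2}\otimes D\otimes\K$ if $n$ is even. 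These two are not graded $*$-isomorphic: by \eqref{eqn:Cliff_even} the even subalgebra of $\Cliff{n}\otimes D\otimes\K$ is $\Cliff{n}^{\,\rm ev}\otimes D\otimes\K\cong\Cliff{n-1}\otimes D\otimes\K$, which is a simple $C^*$-algebra exactly when $n$ is odd (since $\Cliff{2k}\cong M_{2^k}(\C)$ is simple, $\Cliff{2k+1}\cong M_{2^k}(\C)^{\oplus 2}$ is not, and $D\otimes\K$ is simple). Hence $\Cliff{n_1}\otimes D\otimes\K\cong\Cliff{n_2}\otimes D\otimes\K$ as graded $C^*$-algebras iff $n_1\equiv n_2\ (\mathrm{mod}\ 2)$. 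In particular, an isomorphism $A_1\cong A_2$ of bundles of graded $C^*$-algebras restricts to a graded $*$-isomorphism of fibres, so it forces $n_1\equiv n_2\ (\mathrm{mod}\ 2)$; this already gives one half of the ``only if'' and shows that the converse can hold only when the parities agree.

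Next, assuming $n_1\equiv n_2\ (\mathrm{mod}\ 2)$, I fix a graded $*$-isomorphism $\phi\colon\Cliff{n_1}\otimes D\otimes\K\to\Cliff{n_2}\otimes D\otimes\K$ and apply it fibrewise — equivalently, extend the structure group along the topological-group isomorphism $\mathrm{Ad}_\phi\colon\Autgr(\Cliff{n_1}\otimes D\otimes\K)\to\Autgr(\Cliff{n_2}\otimes D\otimes\K)$. This sends $A_1$ to a bundle $A_1'$ that is isomorphic to $A_1$ as a bundle of graded $C^*$-algebras and has the same fibre and structure group as $A_2$. By Corollary~\ref{Rem-crucial} the isomorphism classes of such bundles are in bijection with $[X,B\Autgr(\Cliff{n_2}\otimes D\otimes\K)]\cong\hE_D^1(X)$, so $A_1\cong A_2\iff A_1'\cong A_2\iff[A_1']=[A_2]$ in $\hE_D^1(X)$. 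Thus everything reduces to showing $[A_1']=[A_1]$ in $\hE_D^1(X)$, i.e.\ that $B\mathrm{Ad}_\phi$ is homotopic to the composite of the stabilisation equivalences $B(\id{\Cliff{m-1}}\otimes\,\cdot\,)$, $m=n_1,n_2$, that Corollary~\ref{Rem-crucial} uses to identify the two classifying spaces with $\hE_D^1(X)$.

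This last compatibility is the step I expect to be the main obstacle. I would handle it in two moves: (i) $B\mathrm{Ad}_\phi$ is independent of the choice of $\phi$ up to homotopy, because any two such isomorphisms differ by an element of $\Autgr(\Cliff{n_2}\otimes D\otimes\K)$, i.e.\ by an inner automorphism, and inner automorphisms of a topological group induce maps homotopic to the identity on the classifying space; (ii) one may therefore pick $\phi$ conveniently, and using Clifford periodicity to strip off trivially-graded matrix factors one reduces to $n_1=n_2\in\{1,2\}$, where $\phi$ can be taken to be the identity and both sides are the identity — the matrix factors being absorbed precisely by the homotopy equivalences of Lemma~\ref{lem:homotopy_stable_alg} and Lemma~\ref{lem:Autgr_Cl2_equiv} (together with Proposition~\ref{lem:Cliff_hom_eq}), whose proofs already exhibit the homotopies identifying the stabilisation maps $\id{\Cliff{m-1}}\otimes\,\cdot\,$ with conjugation by the relevant corner/matrix isomorphisms. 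Combined with the previous paragraph this yields $[A_1']=[A_1]$ and hence the corollary. The delicate part is thus purely homotopy-theoretic bookkeeping of these identifications; the algebraic input (the parity of $n$) is elementary.
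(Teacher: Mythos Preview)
Your proof is correct, and it is considerably more explicit than the paper's two-sentence argument. The paper simply records that stability of $\Ggr_D$ makes $\alpha\mapsto\id{\Cliff{1}}\otimes\alpha$ induce a bijection $[X,B\Autgr(\Cliff{n}\otimes D\otimes\K)]\to[X,B\Autgr(\Cliff{n+1}\otimes D\otimes\K)]$, and that on bundles this is $A\mapsto\Cliff{1}\otimes A$; the corollary is then treated as immediate. What the paper leaves implicit is exactly the compatibility you isolate: that the identification of $[X,B\Autgr(\Cliff{n_1}\otimes D\otimes\K)]$ with $[X,B\Autgr(\Cliff{n_2}\otimes D\otimes\K)]$ via a fibre isomorphism $\phi$ agrees with the identification via the stabilisation maps, equivalently that $\Cliff{2}\otimes A\cong A$ for every such bundle $A$. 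Your steps (i)--(ii) supply precisely this, and you cite the right ingredients: the inner-automorphism argument for (i), and for (ii) the computation in the proof of Proposition~\ref{lem:Cliff_hom_eq} showing that under the iso $\theta\colon\Cliff{2}\otimes B\to M_2(\C)\otimes B$ (trivially graded $M_2$) one has $\mathrm{Ad}_\theta\circ(\id{\Cliff{2}}\otimes\,\cdot\,)=\id{M_2}\otimes\,\cdot\,$, together with the explicit homotopy in Lemma~\ref{lem:homotopy_stable_alg} showing $\mathrm{Ad}_{\rho}\circ(\id{M_2}\otimes\,\cdot\,)\simeq\id{}$ for a suitable $\rho$.

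One point of presentation: your phrasing of (ii) as ``reduce to $n_1=n_2\in\{1,2\}$, where $\phi$ can be taken to be the identity'' slightly obscures where the work happens. The content is not that the comparison becomes trivial at $n_1=n_2$, but that the \emph{reduction} (stripping off $\Cliff{2}$-factors and trivially graded matrices) is exactly where the homotopies of the cited lemmas identify the stabilisation map $\id{\Cliff{2}}\otimes\,\cdot\,$ with an $\mathrm{Ad}_\phi$. It would be cleaner to state the single step directly: for some (hence by (i) any) graded iso $\psi\colon\Cliff{n+2}\otimes D\otimes\K\to\Cliff{n}\otimes D\otimes\K$, the self-map $\mathrm{Ad}_\psi\circ(\id{\Cliff{2}}\otimes\,\cdot\,)$ of $\Autgr(\Cliff{n}\otimes D\otimes\K)$ is homotopic to the identity; then iterate. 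Your parity argument for the ``only if'' direction via simplicity of the even subalgebra is clean and more detailed than anything the paper writes down.
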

\begin{proof}
Indeed, by stability of $\Ggr_D$, the inclusion $\Ggr_D(\mathbf{n})\to \Ggr_D(\mathbf{n+1})$,
$\alpha\mapsto \mathrm{id}_{\Cliff{1}}\otimes \alpha$ induces a bijection
\[[X,B\Autgr(\Cliff{n}\otimes D \otimes \mathcal{K})]\to [X,B\Autgr(\Cliff{n+1}\otimes D \otimes \mathcal{K})].\]
At the bundle level this corresponds to the map $\mathcal{A} \mapsto \Cliff{1}\otimes \mathcal{A}$.
\end{proof}

The following is the same as Theorem~\ref{thm1:intro} from the introduction.
\begin{theorem}\label{thm:neww}
The tensor product operation defines a group structure on the isomorphism classes of  locally trivial bundles of graded $C^*$-algebras with fibers isomorphic to $\Cliff{k}\otimes D \otimes \mathcal{K}$, $k\geq 1$ variable. This group is
isomorphic to $H^0(X,\Z/2)\times \hE_{D}^1(X)$.
  \end{theorem}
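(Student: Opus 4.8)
The plan is to construct an explicit bijection
\[
	\Phi\colon \{\text{isomorphism classes of bundles as in the statement}\}\ \longrightarrow\ H^0(X,\Z/2)\oplus\hE^1_D(X),
\]
to observe that the fibrewise graded tensor product defines a commutative monoid structure on the source, and to show that $\Phi$ transports it to the group law on the target; $\Phi$ will then be a bijective homomorphism of commutative monoids onto a group, which forces the source to be a group and $\Phi$ an isomorphism. To build $\Phi$, write $X=\bigsqcup_i X_i$ for the finitely many connected components. If $A$ has fibres $A(x)\cong\Cliff{k(x)}\otimes D\otimes\K$, its transition functions are graded automorphisms and hence preserve the graded isomorphism type of the fibre; since $\Cliff{k}\otimes D\otimes\K\cong\Cliff{k'}\otimes D\otimes\K$ as graded $C^*$-algebras precisely when $k\equiv k'\ (\mathrm{mod}\ 2)$ -- for $k\equiv k'$ one uses $\Cliff{k+2}\cong\Cliff{k}\otimes M_2(\C)$ together with the absorption $M_{2^m}(\C)\otimes\K\cong\K$, while for different parities the even subalgebras are non-isomorphic, being $\cong D\otimes\K$ (simple) for $k$ odd and $\cong(D\otimes\K)\oplus(D\otimes\K)$ for $k$ even by \eqref{eqn:Cliff_even} -- the function $x\mapsto k(x)\bmod 2$ is locally constant and defines $w_0(A)\in H^0(X,\Z/2)$. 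After replacing the fibre over each $X_i$ by a graded-isomorphic one, $A|_{X_i}$ becomes a bundle with structure group $\Autgr(\Cliff{\ell_i}\otimes D\otimes\K)$ for $\ell_i\in\{1,2\}$ with $\ell_i\equiv w_0(A)|_{X_i}$; by Lemma~\ref{lem:Ggr_stable}, Proposition~\ref{lem:Cliff_hom_eq} and Corollary~\ref{Rem-crucial} the canonical map $B\Autgr(\Cliff{\ell_i}\otimes D\otimes\K)\to B\Autgr(\Cliff{1}\otimes D\otimes\K)$ is a homotopy equivalence, so $A|_{X_i}$ determines a class $c_i(A)\in\hE^1_D(X_i)$, and I would set $c(A)=(c_i(A))_i\in\bigoplus_i\hE^1_D(X_i)=\hE^1_D(X)$ and $\Phi(A)=(w_0(A),c(A))$.

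Next I would verify bijectivity and that the tensor product is a monoid operation. Over a connected $X_i$, Corollary~\ref{rem-cliff-stab} together with the reduction above shows that the pair $(\text{parity},c_i)$ is a complete isomorphism invariant, so isomorphism classes of bundles over $X_i$ biject with $\Z/2\times\hE^1_D(X_i)$; taking the product over the $X_i$ shows $\Phi$ is a bijection onto $H^0(X,\Z/2)\oplus\hE^1_D(X)$. For $A,A'$ of the required type, the fibrewise graded tensor product $A\otimes A'$ has fibres $\cong\Cliff{k+k'}\otimes(D\otimes D)\otimes(\K\otimes\K)\cong\Cliff{k+k'}\otimes D\otimes\K$ with $k+k'\geq 2$ and transition functions $g_{\alpha\beta}\otimes g'_{\alpha\beta}$, hence is again of the required type; the operation descends to isomorphism classes and is associative and commutative via the associator and the graded flip $\epsilon$, with the trivial bundle $X\times(\Cliff{2}\otimes D\otimes\K)$ serving as a two-sided unit (using $\Cliff{k+2}\cong\Cliff{k}\otimes M_2(\C)$).

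It then remains to show that $\Phi$ is a homomorphism. On the $H^0$-factor this is immediate because Clifford indices add. On the $\hE^1_D$-factor, after the stability identifications of the previous paragraph the transition cocycle $\{g_{\alpha\beta}\otimes g'_{\alpha\beta}\}$ of $A\otimes A'$ is $\{\mu_{1,1}(g_{\alpha\beta},g'_{\alpha\beta})\}$ for the $\mI$-monoid multiplication of $\mathbf{n}\mapsto B\Ggr_D(\mathbf{n})$, which under $(B\Ggr_D)_{h\mI}\simeq B_\otimes(\Ggr_D)_{h\mI}$ realises the infinite-loop addition defining the group law on $\hE^1_D(X)$; this is exactly the identification of the tensor product of bundles with the group structure already recorded in the proof of Corollary~\ref{Rem-crucial}. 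Being a bijective homomorphism of commutative monoids onto the group $H^0(X,\Z/2)\oplus\hE^1_D(X)$, the map $\Phi$ forces the tensor product to make the source a group, with inverses $[A]^{-1}=\Phi^{-1}(-\Phi(A))$, and is then the asserted isomorphism.

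The hard part will be this last compatibility: one must check that the fibrewise tensor product corresponds to the group law on $\hE^1_D(X)$ \emph{without a twist}, and in particular that tensoring two odd-type bundles -- which produces an even-type bundle -- is compatible with the equivalences $B\Autgr(\Cliff{1}\otimes D\otimes\K)\simeq B\Autgr(\Cliff{2}\otimes D\otimes\K)$ used to define the class $c$. Concretely this reduces to verifying that the cocycle for $g\otimes g'$ acquires no correction term under the graded isomorphism $\Cliff{1}\otimes\Cliff{1}\cong M_2(\C)$, whose grading automorphism is inner and implemented by the unitary $\mathrm{diag}(1,-1)$ as in the proof of Proposition~\ref{lem:Cliff_hom_eq}. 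This is what makes the parity extension $0\to\hE^1_D(X)\to(\text{bundles})\to H^0(X,\Z/2)\to 0$ split compatibly with the tensor product -- hence an untwisted direct sum -- in contrast to the twisted products appearing in Theorems~\ref{thm2:intro} and \ref{thm:mainA}.
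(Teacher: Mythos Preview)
Your proposal is correct and follows essentially the same route as the paper, only with far more detail. The paper's proof is three sentences: it notes that $x\mapsto k(x)$ is locally constant by local triviality, takes its mod~$2$ reduction as the $H^0(X,\Z/2)$-component, and then invokes Corollary~\ref{rem-cliff-stab} on each connected component to obtain the bijection with $\Z/2\times\hE^1_D(X_i)$; the compatibility of the tensor product with the group law on $\hE^1_D$ is taken as already established in Corollary~\ref{Rem-crucial}. Your argument fleshes out exactly these steps---including a nice intrinsic justification (via the simplicity or non-simplicity of the even subalgebra) for why the parity of $k$ is a graded-isomorphism invariant, which the paper does not spell out---and your concern in the final paragraph about a possible twist is resolved precisely by the $\mI$-monoid machinery you already cite: since $\mu_{m,n}$ \emph{is} the tensor product and the stability equivalences $\Ggr_D(\mathbf{m})\simeq\Ggr_D(\mathbf{n})$ are by construction compatible with $\mu$, no correction term appears.
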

\begin{proof}
Let $A$ be a bundle as in the statement with fibers $A(x)=\Cliff{k(x)}\otimes D \otimes \mathcal{K}$.
The map $x\mapsto k(x)$ is locally constant by assumption.
The reduction $(\mathrm{mod}$ $ 2)$ of $k(x)$ defines element of  $H^0(X,\Z/2)$ associated to  the isomorphism class of $A$. We conclude by applying Corollary~\ref{rem-cliff-stab} for each connected component of $X$.
\end{proof}

\section{Computation of $\hE_\C^1(X)$ and $\hE_{M_P}^1(X)$}
In this section we will (implicitly) present a new model for $BGL_1(KU^D)$ that is compatible with classical results about the Brauer group and Clifford algebras. As a space, $BGL_1(KU)$ decomposes as a product
\[
	BGL_1(KU) \simeq K(\Z/2,1) \times K(\Z,3) \times BBSU_{\otimes}\ .
\]
However, this decomposition does not respect the infinite loop space structure. In fact, the first two factors $K(\Z/2,1) \times K(\Z,3)$ split off from the rest and the group structure induced on homotopy classes, ie.\ on the set $H^1(X,\Z/2) \times H^3(X,\Z)$, is
\[
	(w, \tau) \cdot (w',\tau') = (w + w', \tau + \tau' + \beta(w \cup w'))
\]
for $w,w' \in H^1(X,\Z/2)$ and $\tau,\tau' \in H^3(X,\Z)$, where $\beta \colon H^2(X,\Z/2) \to H^3(X,\Z)$ is the Bockstein homomorphism. As explained in \cite{paper:DonovanKaroubi} the appearance of the Bockstein homomorphism is closely linked to the behaviour of Clifford bundles with respect to the fibrewise tensor product.

\subsection{Comparing $E^1_{M_P}(X)$ with $\hE^1_{M_P}(X)$.}
Let $H$ be a multiplicative commutative group with unit $1$. The constant $\mI$-monoid $H$ associated to this group has
 objects $H(\mathbf{n})=H$ if $n>0$ while $H(\mathbf{0})=\{1\}$. The morphisms corresponding to $\mathbf{m}\to \mathbf{n}$ are identity maps for $m>0$. The multiplication $H(\mathbf{m})\times H(\mathbf{n})\to H(\mathbf{m\sqcup n})$ is the group multiplication on $H$.

\begin{proposition}\label{prop:pi0}
Let $\G$ be a stable Eckmann-Hilton $\mathcal{I}$-group. Suppose that each permutation $\mathbf{n}\to \mathbf{n}$ induces the identity map on $\pi_0\G(\mathbf{n})$. Then the group $\pi_0\G(\mathbf{1})$ is abelian and
 there is canonical map of $\mI$-monoids $\theta:\G\to \pi_0\G(\mathbf{1}).$
\end{proposition}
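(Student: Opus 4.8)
The plan is to produce the map of $\mI$-monoids $\theta \colon \G \to \pi_0\G(\mathbf{1})$ componentwise by assembling the maps $\theta_n \colon \G(\mathbf{n}) \to \pi_0\G(\mathbf{1})$ and to use stability of $\G$ together with the EH-structure to check that these are compatible with the $\mI$-monoid structure on the target (which, recall, is the constant $\mI$-monoid on the abelian group $\pi_0\G(\mathbf{1})$). First I would establish that $\pi_0\G(\mathbf{1})$ is abelian: since $\G$ is an Eckmann-Hilton $\mI$-group, the tensor product $\mu_{1,1} \colon \G(\mathbf{1}) \times \G(\mathbf{1}) \to \G(\mathbf{2})$ and the composition $\nu_1$ on $\G(\mathbf{1})$ interact via the Eckmann-Hilton diagram \cite[Def.~3.1]{DP2}; passing to $\pi_0$ and using stability, the map $\mu_{1,1*} \colon \pi_0\G(\mathbf{1}) \times \pi_0\G(\mathbf{1}) \to \pi_0\G(\mathbf{2})$ becomes (up to the isomorphism $\mu_{1,1*}$ induced by a morphism $\mathbf{1}\to\mathbf{2}$, which is invertible by stability) a second monoid structure on $\pi_0\G(\mathbf{1})$ sharing the same unit as the composition-induced structure, so the classical Eckmann-Hilton argument forces both to agree and to be commutative.

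Next I would define $\theta_n$. For $n \geq 1$ pick the morphism $\iota_n \colon \mathbf{1} \to \mathbf{n}$ sending $1 \mapsto 1$ (say); by stability $\G(\iota_n) \colon \G(\mathbf{1}) \to \G(\mathbf{n})$ is a homotopy equivalence, hence induces an isomorphism $\G(\iota_n)_* \colon \pi_0\G(\mathbf{1}) \to \pi_0\G(\mathbf{n})$, and I set $\theta_n = \big(\G(\iota_n)_*\big)^{-1} \colon \pi_0\G(\mathbf{n}) \to \pi_0\G(\mathbf{1})$, composed with the quotient $\G(\mathbf{n}) \to \pi_0\G(\mathbf{n})$. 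For $n = 0$ the target $\G(\mathbf{0})$ is a point, so $\theta_0$ is the unique map to the unit of $\pi_0\G(\mathbf{1})$, matching the convention that the constant $\mI$-monoid has value $\{1\}$ at $\mathbf{0}$. One then checks that $\theta_n$ does not depend on the choice of $\iota_n$: any two morphisms $\mathbf{1} \to \mathbf{n}$ differ by a permutation of $\mathbf{n}$ (postcomposed) and a point is its own only permutation, and by hypothesis permutations $\mathbf{n}\to\mathbf{n}$ act as the identity on $\pi_0\G(\mathbf{n})$, so all choices give the same $\theta_n$.

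It remains to verify that $\theta = (\theta_n)$ is a morphism of $\mI$-monoids: (i) naturality in $\mI$, i.e.\ for every morphism $f \colon \mathbf{m} \to \mathbf{n}$ we have $\theta_m = \theta_n \circ \pi_0\G(f)$, and (ii) multiplicativity, i.e.\ $\theta_{m+n} \circ \mu_{m,n*} = $ (group multiplication on $\pi_0\G(\mathbf{1})$) $\circ (\theta_m \times \theta_n)$, together with unitality. For (i), factor $f$ through $\mathbf{1}$ where possible and reduce to the permutation-invariance just established plus functoriality of $\pi_0$; the case $m=0$ is immediate since then $\theta_m$ lands in the unit. For (ii), the key point is that $\mu_{m,n}$ composed with a suitable morphism $\mathbf{1} \to \mathbf{m}\sqcup\mathbf{n}$ identifies, on $\pi_0$, with $\mu_{1,1*}$ applied to the two factors (using stability to move everything into $\G(\mathbf{1})$-coordinates), and $\mu_{1,1*}$ is exactly the abelian group multiplication on $\pi_0\G(\mathbf{1})$ by the first paragraph; the EH-relation is what guarantees this tensor-multiplication on $\pi_0$ is a homomorphism rather than just a pairing.

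The main obstacle I expect is bookkeeping in step (ii): one must be careful that the composite isomorphisms $\big(\G(\iota)_*\big)^{-1}$ used to define the various $\theta_n$ are mutually compatible under the block-sum morphisms, so that the identification of $\mu_{m,n*}$ with the group law on $\pi_0\G(\mathbf{1})$ is canonical and not off by a permutation. This is where commutativity of the $\mI$-monoid $\G$ (the diagram \eqref{eqn:comm_I_mon}) and the hypothesis that permutations act trivially on $\pi_0$ are both used decisively; once those are in hand the verification is a diagram chase with no further input.
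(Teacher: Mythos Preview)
Your proposal is correct and follows essentially the same approach as the paper: define $\theta_n$ as the quotient $\G(\mathbf{n})\to\pi_0\G(\mathbf{n})$ followed by $j_n^{-1}$, where $j_n$ is the common $\pi_0$-isomorphism induced by any morphism $\mathbf{1}\to\mathbf{n}$ (well-defined by the permutation hypothesis, invertible by stability), and then verify naturality and multiplicativity by reducing to $m=n=1$ and invoking the Eckmann--Hilton interchange together with the commutativity diagram \eqref{eqn:comm_I_mon}. The only difference is organizational: you establish abelianness of $\pi_0\G(\mathbf{1})$ first via a direct Eckmann--Hilton argument, whereas the paper carries out the multiplicativity computation $j_2[gh]=[\mu_{1,1}(g,h)]$ first and then reads off commutativity from $[\mu_{1,1}(g,h)]=[(\tau_{1,1})_*\mu_{1,1}(h,g)]=[\mu_{1,1}(h,g)]$.
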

\begin{proof}
Since each morphism $\mathbf{n}\to \mathbf{n}$ induces the identity map on $\pi_0\G(\mathbf{n})$,
it follows that all morphisms $\mathbf{1}\to \mathbf{n}$ induce the same map $j_n:\pi_0\G(\mathbf{1})\to \pi_0\G(\mathbf{n})$. Since $\G$ is stable, $j_n$ is an isomorphism of groups.
Define $\theta_n:\G(\mathbf{n})\to \pi_0\G(\mathbf{1})$ as the composition
\[
\begin{tikzcd}
	\G(\mathbf{n}) \ar[r] & \pi_0\G(\mathbf{n})  \ar[r ,"j_n^{-1}"] & \pi_0\G(\mathbf{1}),
	\end{tikzcd}
\]
It follows that for all $m>0$ and all morphisms $f:\mathbf{m}\to \mathbf{n}$  the diagram
\[	\begin{tikzcd}
			\pi_0\G(\mathbf{m}) \ar[r, "\pi_0f"] & \pi_0\G(\mathbf{n}) \\
 & \pi_0\G(\mathbf{1})\ar[ul, "j_m"  ]\ar[u, "j_n" swap]
		\end{tikzcd}
\]
is commutative. This shows that $\theta$ is a map of $\mI$-spaces. The transformation $\theta$ preserves multiplication if  the following diagram is commutative:
\begin{equation}\label{diagram:mu}
		\begin{tikzcd}
			\pi_0\G(\mathbf{m}) \times \pi_0\G(\mathbf{n})\ar[r] & \pi_0\G(\mathbf{m \sqcup n}) \\
\pi_0\G(\mathbf{1}) \times \pi_0\G(\mathbf{1})\arrow[u, shift left=4.5ex  ]\arrow[u,  shift right=5.5ex]\ar[r] & \pi_0\G(\mathbf{1})\ar[u]
		\end{tikzcd}
	\end{equation}
By naturality of $\mu_{m,n}$ the diagram
\[
		\begin{tikzcd}
			\G(\mathbf{m}) \times \G(\mathbf{n})\ar[r] & \G(\mathbf{m\sqcup n}) \\
\G(\mathbf{1}) \times \G(\mathbf{1})\arrow[u, shift left=4.5ex  ]\arrow[u,  shift right=5.5ex]\ar[r] & \G(\mathbf{1\sqcup 1})\ar[u]
		\end{tikzcd}
	\]
is commutative and hence we may assume that $m=n=1$ in \eqref{diagram:mu}. For simplicity we will write $gh$ for $\nu_n(g,h)$, $g,h \in \G(\mathbf{n})$. The class of $g$ in $\pi_0\G(\mathbf{n})$ is denoted by $[g]$.

Our task is to verify that
\[j_{2}[gh]=[\mu_{1,1}(g,h)].\]
As noted in the third line of the proof of \cite[Lemma 3.2]{DP2}, by the naturality properties of $\mu_{m,n}$ it follows that $\mu_{1,1}(1,g)=(\iota_{2})_*(g)$, where $\iota_2:\mathbf{1}\to \mathbf{2}$, $\iota_2(1)=2$. From this we see that $[\mu_{1,1}(1,g)]=j_2[g]$ for $g\in \G(\mathbf{1})$.
Thus we need to show that
\[[\mu_{1,1}(1,gh)]=[\mu_{1,1}(g,h)].\]
The Eckmann-Hilton property requires that
$\mu_{1,1}(g,g')\mu_{1,1}(h,h')=\mu_{1,1}(gh,g'h')$.
It follows that
\[\mu_{1,1}(1,gh)=\mu_{1,1}(1,g)\mu_{1,1}(1,h)\]
and
\[\mu_{1,1}(g,h)=\mu_{1,1}(g,1)\mu_{1,1}(1,h)=(\tau_{1,1})_*(\mu_{1,1}(1,g))\mu_{1,1}(1,h),\]
since any Eckmann-Hilton $\mI$-monoid is commutative by \cite[Lemma 3.2]{DP2}.
The $\pi_0$ class of the block permutation $(\tau_{1,1})_*$ equals the identity map (by hypothesis). It follows  that
\[[\mu_{1,1}(g,h)]=[\mu_{1,1}(1,g)][\mu_{1,1}(1,h)]=[\mu_{1,1}(1,gh)].\]
Since
\[j_{2}[gh]=[\mu_{1,1}(g,h)]=[(\tau_{1,1})_*(\mu_{1,1}(h,g)]=[\mu_{1,1}(h,g)]=j_{2}[hg],\]
the commutativity of $\pi_0\G(\mathbf{1})$ follows from stability.
\end{proof}

Let $D$ be a strongly selfabsorbing $C^*$-algebra.
  Fix the idempotent $e=1_D \otimes e_{11}\in D \otimes \K$ where $e_{11}$ is a rank one projection.  For each $n\geq 1$ fix an isomorphism $\theta_n:(D \otimes \K)^{\otimes n}\to D \otimes \K$ such that $\theta_n(e^{\otimes n})=e$. The space of all such isomorphisms in contractible by \cite{DP1}.
Define maps $\mathrm{Aut}((D \otimes \K)^{\otimes n})\to \KD$ by $\alpha\mapsto [\theta_n(\alpha(e^{\otimes n}))]$ for $n>0$ and
$\mathrm{Aut}(\C)=\{\mathrm{id}\} \to \{1\}$.
\begin{lemma}\label{lemma:K0}
  The maps $\G_D(\mathbf{n}) \to \KD$ define a morphism of $\mI$-monoids.
\end{lemma}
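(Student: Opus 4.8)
The plan is to check the three properties required for the family $\varphi_n\colon\G_D(\mathbf{n})\to\KD$, $\alpha\mapsto[\theta_n(\alpha(e^{\otimes n}))]$, to constitute a morphism from $\G_D$ to the constant $\mI$-monoid associated with $\KD$: that each $\varphi_n$ is a well-defined continuous map, that the family is natural in $\mathbf{n}$, and that it is multiplicative and unital. Well-definedness is immediate: $\alpha(e^{\otimes n})$ is a full projection in $(D\otimes\K)^{\otimes n}$ because $e^{\otimes n}$ is full and $\alpha$ is an automorphism, so $\theta_n(\alpha(e^{\otimes n}))$ is a full projection in $D\otimes\K$ and represents a class in $K_0(D)^\times_+=\KD$ by \cite{DP1}; and $\varphi_n$ is locally constant, hence continuous, into the discrete group $\KD$, since sufficiently close projections have equal $K_0$-class. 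The single tool I would use throughout is the contractibility, established in \cite{DP1}, of the space of $*$-isomorphisms $(D\otimes\K)^{\otimes k}\to D\otimes\K$ that send $e^{\otimes k}$ to $e$: this space being path-connected, any two of its points are homotopic and therefore induce the same homomorphism on $K_0$, which in particular makes $\varphi_n$ independent of the choice of $\theta_n$.

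The core step is multiplicativity. Here I would fix a $*$-isomorphism $\mu\colon(D\otimes\K)\otimes(D\otimes\K)\to D\otimes\K$ with $\mu(e\otimes e)=e$, chosen as the tensor product of the isomorphism $D\otimes D\cong D$ defining the ring structure on $K_0(D)$ with an isomorphism $\K\otimes\K\cong\K$ fixing $e_{11}$, so that $\mu_*$ implements the product of $K_0(D)$ via the external product in $K$-theory. For $\alpha_1\in\G_D(\mathbf{m})$ and $\alpha_2\in\G_D(\mathbf{n})$ put $p=\alpha_1(e^{\otimes m})$, $q=\alpha_2(e^{\otimes n})$, so that $(\alpha_1\otimes\alpha_2)(e^{\otimes(m+n)})=p\otimes q$. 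Both $\theta_{m+n}$ and $\mu\circ(\theta_m\otimes\theta_n)$ are $*$-isomorphisms $(D\otimes\K)^{\otimes(m+n)}\to D\otimes\K$ carrying $e^{\otimes(m+n)}$ to $e$, hence induce the same map on $K_0$, and therefore
\[
\varphi_{m+n}(\alpha_1\otimes\alpha_2)=[\theta_{m+n}(p\otimes q)]=[\mu(\theta_m(p)\otimes\theta_n(q))]=[\theta_m(p)]\cdot[\theta_n(q)]=\varphi_m(\alpha_1)\,\varphi_n(\alpha_2).
\]
The same computation applied to the unit of $\G_D$ identifies the image of $\G_D(\mathbf{0})=\{\id{\C}\}$ with $[\theta_n(e^{\otimes n})]=[e]=1$, so the morphism is unital.

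It remains to verify naturality, and since every morphism of $\mI$ factors as a composite of permutations and of the inclusions $\iota\colon\mathbf{n}\to\mathbf{n+1}$, it suffices to treat those. For a permutation $\sigma$ we have $\G_D(\sigma)(\alpha)=\epsilon_\sigma\circ\alpha\circ\epsilon_\sigma^{-1}$ with $\epsilon_\sigma(e^{\otimes n})=e^{\otimes n}$, so $\varphi_n(\G_D(\sigma)(\alpha))=[(\theta_n\circ\epsilon_\sigma)(\alpha(e^{\otimes n}))]=\varphi_n(\alpha)$, where the last equality uses that $\theta_n\circ\epsilon_\sigma$ again sends $e^{\otimes n}$ to $e$ together with the contractibility tool; and for the inclusion, $\G_D(\iota)(\alpha)=\id{D\otimes\K}\otimes\alpha=\mu_{1,n}(\id{D\otimes\K},\alpha)$, so multiplicativity already gives $\varphi_{n+1}(\G_D(\iota)(\alpha))=\varphi_1(\id{D\otimes\K})\cdot\varphi_n(\alpha)=\varphi_n(\alpha)$ because $\varphi_1(\id{D\otimes\K})=[\theta_1(e)]=1$. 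The main obstacle I anticipate is confined to the multiplicativity step — specifically, confirming that the chosen $\mu$ genuinely induces the product of $K_0(D)$ on the relevant classes — and it is the contractibility result of \cite{DP1} that carries the weight there, since it is what permits replacing $\theta_{m+n}$ by the factored isomorphism $\mu\circ(\theta_m\otimes\theta_n)$ with no dependence on the separate choices of the $\theta_k$.
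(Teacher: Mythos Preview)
Your argument is correct and complete, but it follows a different route from the paper. The paper does not verify the $\mI$-monoid morphism axioms directly; instead it first isolates a general principle (Proposition~\ref{prop:pi0}): for any stable EH-$\mI$-group $\G$ in which every permutation acts trivially on $\pi_0\G(\mathbf{n})$, the quotient maps $\G(\mathbf{n})\to\pi_0\G(\mathbf{n})\cong\pi_0\G(\mathbf{1})$ assemble into a morphism of $\mI$-monoids. The proof of Lemma~\ref{lemma:K0} then reduces to checking this single hypothesis, which follows because the permutation automorphism $\epsilon_\sigma$ fixes $e^{\otimes n}$ and is therefore homotopic to the identity by \cite{DP1}. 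Your direct verification unpacks essentially the same content---your treatment of naturality under permutations is exactly the observation that $\epsilon_\sigma(e^{\otimes n})=e^{\otimes n}$, and your multiplicativity step is the concrete form of the Eckmann--Hilton computation carried out abstractly inside Proposition~\ref{prop:pi0}. The advantage of the paper's route is reusability: the same proposition is invoked again for $\Ggr_D$ in Lemma~\ref{lemma:K00}. The advantage of yours is that it is self-contained and makes explicit why the map lands in $K_0(D)^\times_+$ and why it is independent of the choices of the $\theta_n$, points the paper absorbs into the identification $\pi_0\G_D(\mathbf{1})\cong K_0(D)^\times_+$ from \cite{DP1}.
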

\begin{proof}
It was shown in \cite{DP1} that $\G_D$ is a stable $EH$-$\mI$-group and that $\pi_0\G_D(\mathbf{1})\cong \KD$, via the map $\alpha\mapsto [\alpha(e)]$.
 Any permutation $\mathbf{n}\to \mathbf{n}$ induce an automorphism $\sigma \in \mathrm{Aut}((D \otimes \K)^{\otimes n})$ which fixes $e^{\otimes n}$ and hence $\sigma$ is homotopic to the identity map \cite{DP1}.
  Thus $[\sigma \circ \alpha \circ \sigma^{-1}]=[\alpha]$ in  $\pi_0\G_D(\mathbf{n}),$
 for all $\alpha \in \G_D(\mathbf{n})$. The statement follows from now from Proposition~\ref{prop:pi0}.
\end{proof}

\begin{lemma}\label{lemma:K00}
  The maps $\Ggr_D(\mathbf{n}) \to \Z/2 \times \KD \to \Z/2$ define morphisms of $\mI$-monoids.
\end{lemma}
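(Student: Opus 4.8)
The plan is to deduce Lemma~\ref{lemma:K00} from Proposition~\ref{prop:pi0}, applied to the EH-$\mI$-group $\Ggr_D$, followed by post-composition with a quotient homomorphism, in the same spirit as the proof of Lemma~\ref{lemma:K0}. First I would record that $\Ggr_D$ is a stable EH-$\mI$-group: the Eckmann--Hilton property was observed in the discussion preceding Lemma~\ref{lem:Ggr_stable}, and stability is exactly Lemma~\ref{lem:Ggr_stable}. To invoke Proposition~\ref{prop:pi0} it then remains to check its hypothesis, namely that every permutation $\sigma\colon\mathbf{n}\to\mathbf{n}$ induces the identity on $\pi_0\Ggr_D(\mathbf{n})$.

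For this hypothesis I would argue differently from the ungraded situation in Lemma~\ref{lemma:K0} (where one uses that $\sigma$ fixes $e^{\otimes n}$). Here the map $\Ggr_D(\sigma)$ is conjugation by the graded permutation $\epsilon_\sigma$ of the tensor factors, and $\epsilon_\sigma$ is itself an element of the topological group $\Ggr_D(\mathbf{n})=\Autgr((\Cliff{1}\otimes D\otimes\K)^{\otimes n})$. Thus $\Ggr_D(\sigma)$ is an inner automorphism of this group, and since $\pi_0\Ggr_D(\mathbf{n})$ is abelian by Lemma~\ref{lem:comp_inv}, inner automorphisms act trivially on $\pi_0$. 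Hence $\Ggr_D(\sigma)$ induces the identity on $\pi_0\Ggr_D(\mathbf{n})$ for every $n$.

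With the hypothesis verified, Proposition~\ref{prop:pi0} yields a canonical morphism of $\mI$-monoids $\Ggr_D\to\pi_0\Ggr_D(\mathbf{1})$, the target being the constant $\mI$-monoid on $\pi_0\Ggr_D(\mathbf{1})$. Combining Lemma~\ref{lem:Autgr_Cl1_short_exact}, Lemma~\ref{lem:Autgr_Cl2_equiv} and Lemma~\ref{lem:comp_inv} I would identify $\pi_0\Ggr_D(\mathbf{1})\cong\Z/2\times\KD$ as groups, which already exhibits the first family of maps in the statement as a morphism of $\mI$-monoids. The projection $\Z/2\times\KD\to\Z/2$ onto the first factor is a group homomorphism, hence induces a morphism of the associated constant $\mI$-monoids; composing it with the previous map gives the second assertion.

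The only genuinely delicate point, and the step I would spend the most care on, is bookkeeping: making sure that the maps $\Ggr_D(\mathbf{n})\to\Z/2\times\KD$ appearing in the statement are precisely the maps $\theta_n$ produced by Proposition~\ref{prop:pi0}, i.e.\ pinning down the isomorphism $\pi_0\Ggr_D(\mathbf{1})\cong\Z/2\times\KD$ compatibly with the connecting maps $j_n$ and with $\psi_1\colon G_D\to\Ggr_D$. Since what is ultimately needed is only the $\Z/2$-quotient, which is canonically the cokernel of $\pi_0(\psi_1)\colon\KD\to\pi_0\Ggr_D(\mathbf{1})$, any residual ambiguity in the complementary $\Z/2$-summand is immaterial for the final statement.
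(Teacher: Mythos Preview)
Your proposal is correct and follows essentially the same approach as the paper: verify the hypotheses of Proposition~\ref{prop:pi0} for $\Ggr_D$ and then post-compose with the projection to $\Z/2$. The only cosmetic difference is that where you spell out explicitly that $\Ggr_D(\sigma)$ is an inner automorphism and hence trivial on the abelian group $\pi_0\Ggr_D(\mathbf{n})$, the paper simply points back to the proof of Lemma~\ref{lem:comp_inv}, which contains this same observation implicitly.
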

\begin{proof}
By Lemma~\ref{lem:Ggr_stable} the $EH$-$\mI$-group $\Ggr_D$ is stable and $\pi_0\Ggr_D(\mathbf{1})\cong \Z/2 \times \KD$.
Moreover we have shown in the proof of Lemma~\ref{lem:comp_inv}   that each permutation $\mathbf{n}\to \mathbf{n}$ induces the identity map on $\pi_0\Ggr_D(\mathbf{n})$. We conclude the proof by applying Proposition~\ref{prop:pi0}.
\end{proof}

\begin{theorem}\label{thm:Z2}
	Let $X$ be a finite CW-complex and let $D$ be a stable finite strongly self-absorbing $C^*$-algebra satisfying the UCT. The groups $E^1_{D}(X)$ and $\hE^1_{D}(X)$ fit into a short exact sequence
	\[
	\begin{tikzcd}
		0 \ar[r] & E^1_{D}(X) \ar[r] & \hE^1_{D}(X) \ar[r ,"\delta_0"] & H^1(X, \Z/2) \ar[r] & 0.
	\end{tikzcd}
	\]
	If $L$ is a real line bundle on $X$ with associated Clifford bundle $\Cliff{L}$, then $\delta_0(\Cliff{L}\otimes D \otimes \mathcal{K})=w_1(L)$, where $w_1(L)$ is the first Stiefel-Whitney class of $L$.
\end{theorem}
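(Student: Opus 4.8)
The plan is to avoid connecting homomorphisms altogether and to read the short exact sequence directly off the product decomposition of $B\Autgr(\Cliff{1} \otimes D \otimes \K)$ supplied by Lemma~\ref{lem:Autgr_Cl1_short_exact}. As a preliminary step I would record, exactly as in the proof of Corollary~\ref{Rem-crucial}, that stability of the EH-$\mI$-group $\Ggr_D$ (Lemma~\ref{lem:Ggr_stable}) together with \cite[Thm.~3.6]{DP2} gives identifications $\hE^1_D(X) \cong [X, B\Autgr(\Cliff{1} \otimes D \otimes \K)]$ and $E^1_D(X) \cong [X, B\Aut(D \otimes \K)]$ under which the group law on the left corresponds to the fibrewise tensor product of graded bundles. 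Since $D$ is simple, $D \otimes \K$ is a simple, trivially graded $C^*$-algebra, so Lemma~\ref{lem:Autgr_Cl1_short_exact} yields an isomorphism of topological groups $\Autgr(\Cliff{1}) \times \Aut(D \otimes \K) \xrightarrow{\ \cong\ } \Autgr(\Cliff{1} \otimes D \otimes \K)$, $(\beta,\alpha) \mapsto \beta \otimes \alpha$, with $\Autgr(\Cliff{1}) \cong \Z/2$ generated by the grading automorphism $\nu$. Applying the classifying space functor, which converts direct products into products, I obtain a homeomorphism $B\Autgr(\Cliff{1} \otimes D \otimes \K) \cong K(\Z/2,1) \times B\Aut(D \otimes \K)$, hence a bijection of sets $\hE^1_D(X) \cong H^1(X;\Z/2) \times E^1_D(X)$.

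The next step is to match the two maps of the asserted sequence with the two coordinate maps of this product. The inclusion $E^1_D(X) \to \hE^1_D(X)$ is the one induced by the morphism of EH-$\mI$-groups $\psi_n \colon \G_D(\mathbf{n}) \to \Ggr_D(\mathbf{n})$, $\alpha \mapsto \theta_n^{-1} \circ (\id{\Cliff{n}} \otimes \alpha) \circ \theta_n$, used in the construction of the $\Ggr_D$-action on $KU^D$; for $n=1$ the isomorphism $\theta_1$ is the identity, so on first deloopings this is $B$ of $\alpha \mapsto \id{\Cliff{1}} \otimes \alpha$, i.e.\ the inclusion of the second factor. The map $\delta_0$ of the statement is the one induced by the morphism of $\mI$-monoids $\Ggr_D \to \Z/2$ of Lemma~\ref{lemma:K00}; unwinding Proposition~\ref{prop:pi0} and Lemma~\ref{lem:Autgr_Cl1_short_exact}, that morphism is on first deloopings $B$ of the projection $\Autgr(\Cliff{1} \otimes D \otimes \K) \to \Autgr(\Cliff{1}) = \Z/2$, i.e.\ the projection onto the first factor. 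Under the bijection above the sequence $0 \to E^1_D(X) \to \hE^1_D(X) \xrightarrow{\delta_0} H^1(X;\Z/2) \to 0$ therefore becomes the visibly split exact sequence of pointed sets $0 \to E^1_D(X) \to H^1(X;\Z/2) \times E^1_D(X) \to H^1(X;\Z/2) \to 0$. Since both maps are induced by morphisms of commutative $\mI$-monoids, they are homomorphisms for the infinite loop space (tensor product) group structures, and an exact sequence of pointed sets whose maps are homomorphisms is an exact sequence of abelian groups; this proves the first claim. (Note that the group law on $\hE^1_D(X)$ is in general not the product law but the twisted one of Theorem~\ref{thm2:intro}; the twisting term, however, lies in the subgroup $E^1_D(X)$, so the sequence is still short exact as written.)

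For the assertion about line bundles: a real line bundle $L$ on $X$ is classified by a map $X \to B\mathrm{O}(1)$ representing $w_1(L)$ under the canonical identification $B\mathrm{O}(1) = B\Z/2 = K(\Z/2,1)$. The Clifford bundle $\Cliff{L}$ is the bundle associated to $L$ via the action of $\mathrm{O}(1)$ on $\Cliff{1}$ by graded automorphisms; here $-1 \in \mathrm{O}(1)$ acts on $\R \subset \Cliff{1}$ by $v \mapsto -v$, that is, by the grading automorphism $\nu$ (since $\R$ is precisely the odd part of $\Cliff{1}$), so this action identifies $\mathrm{O}(1)$ with $\Autgr(\Cliff{1})$. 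Hence $\Cliff{L} \otimes D \otimes \K$ is the $\Cliff{1} \otimes D \otimes \K$-bundle with structure group $\Autgr(\Cliff{1}) \times \{\mathrm{id}\}$, and its class in $\hE^1_D(X)$ corresponds to $(w_1(L),0)$ under the product decomposition. Applying $\delta_0$ gives $\delta_0(\Cliff{L} \otimes D \otimes \K) = w_1(L)$, which in particular re-proves surjectivity of $\delta_0$, every class in $H^1(X;\Z/2)$ being the first Stiefel--Whitney class of some real line bundle.

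I expect the only delicate point to be the bookkeeping in the second step: one must ensure that the identification $\hE^1_D(X) \cong [X, B\Autgr(\Cliff{1} \otimes D \otimes \K)]$ is the one carrying the cohomology-theory group law to the tensor product of bundles, and that the two maps of the sequence are genuinely the ones induced by $\psi$ and by $\Ggr_D \to \Z/2$ --- only then does the set-level splitting upgrade to a short exact sequence of abelian groups. An alternative, slightly heavier route would be to promote the $\mI$-monoid maps $\G_D \to \Ggr_D \to \Z/2$ to a cofibre sequence of connective spectra with cofibre $H\Z/2$ (checking on homotopy groups using Lemma~\ref{lem:comp_inv} together with the $\pi_{>0}$-isomorphisms of Theorem~\ref{thm:action_on_KUD}, so that the cofibre has $\pi_0 \cong \Z/2$ and vanishing higher homotopy) and then extract the sequence from the associated long exact sequence; this route additionally requires showing that the boundary maps $H^0(X;\Z/2) \to E^1_D(X)$ and $H^1(X;\Z/2) \to E^2_D(X)$ vanish --- the former because $\hE^0_D(X) \to H^0(X;\Z/2)$ is visibly surjective, the latter by the Clifford bundle computation above.
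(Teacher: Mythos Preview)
Your proposal is correct and follows essentially the same approach as the paper: both arguments use Lemma~\ref{lem:Autgr_Cl1_short_exact} to produce the product decomposition $B\Autgr(\Cliff{1}\otimes D\otimes\K)\simeq K(\Z/2,1)\times B\Aut(D\otimes\K)$, identify the two maps of the sequence with the $\mI$-monoid morphisms $G_D\to\Ggr_D$ and $\Ggr_D\to\Z/2$ (the latter from Lemma~\ref{lemma:K00}), and verify the Clifford bundle assertion by tracking the cocycle for $L$ through the identification $\mathrm{O}(1)\cong\Autgr(\Cliff{1})$. Your write-up is more explicit than the paper's about why set-level exactness upgrades to group exactness and about where the twisting sits, and your alternative cofibre-sequence route is an extra observation not in the paper, but the core argument is the same.
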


\begin{proof} 
	There are maps of $\mI$-monoids:
\[G_D(\mathbf{n}) \to \Ggr_D(\mathbf{n}) \to \pi_0(\Ggr_D(\mathbf{n}))/\pi_0(G_D(\mathbf{n}))\cong \Z/2.\]
The first map was discussed earlier in this section. The second map sends the $\pi_0$-class of the image of the grading homomorphism $\nu \otimes \id{D \otimes \mathcal{K}}$ in $\Ggr_D(\mathbf{n})$ to the generator of $\Z/2$. If $c_{ij} \colon U_{ij} \to \Z/2$ is the cocycle representing $w_1(L)$ for the line bundle $L \to X$, then $\Cliff{L}$ is the algebra bundle represented by $d_{ij} \colon U_{ij} \to \Aut(\Cliff{1})$ obtained by composing $c_{ij}$ with the isomorphism $\Z/2 \to \Aut(\Cliff{1})$ that maps the generator of $\Z/2$ to $a + b\,e \mapsto a - b\,e$ for $a,b \in \C$ and $e \in \Cliff{1}$ with $e^2 = 1$. But this is the homomorphism that is used in the definition of $\delta_0$ to map $H^1(X,\pi_0(\Aut(\Cliff{1} \otimes D \otimes \K)))$ to $H^1(X, \Z/2)$. Hence, we have $\delta_0(\Cliff{L} \otimes D \otimes \K) = w_1(L)$.
 
The isomorphism of groups
\[\Z/2\times \Aut(D\otimes \mathcal{K}) =\Autgr(\Cliff{1} )\times \Aut(D\otimes \mathcal{K}) \to \Autgr(\Cliff{1} \otimes D\otimes \mathcal{K})\]
from {Lemma}~\ref{lem:Autgr_Cl1_short_exact} induces a homotopy equivalence
\[B(\Z/2 )\times B\Aut(D\otimes \mathcal{K}) \to B\Autgr(\Cliff{1} \otimes D\otimes \mathcal{K})\]
and hence a bijection
\begin{equation}\label{eqn:splito}
H^1(X, \Z/2)\times E^1_{D}(X) \to \hE^1_{D}(X),
\end{equation}
whose restriction to the second component induces a morphism of groups $E^1_{D}(X) \to \hE^1_{D}(X)$.
\end{proof}
\begin{remark}\label{remark:review} We shall use several times the following basic fact, \cite[p.93]{Brown:book-cohomology}.
Suppose that
\[
	\begin{tikzcd}
		0 \ar[r] & A \ar[r,"i"] & E \ar[r,"\pi"] & G \ar[r] & 0
	\end{tikzcd}
	\]
	is an extension of abelian groups and $\sigma:G \to E$ is a  map such that $\sigma(0)=0$ and $\pi\circ \sigma=\mathrm{id}_E$. Let $c:G\times G \to A$ be the normalized $2$-cocycle defined by
	$i(c(g,h))=\sigma(gh)\sigma(h)^{-1}\sigma^{-1}(g)$, $g,h\in G$. Then the group $E$ is isomorphic to $G\times A$ endowed with the group law:
	\[(g,a)(g',a')=(g+g',a+a'+c(g,g')).\]
\end{remark}
\begin{proposition}[Cf. \cite{paper:DonovanKaroubi}, \cite{paper:Parker-Brauer}]\label{prop:Z2}
For any finite CW-complex $X$, $\hE_\C^1(X)\cong H^1(X,\Z/2) \times_{_{tw}}  H^3(X,\Z)$ with group structure:
\[
	(w, \tau) \cdot (w',\tau') = (w + w', \tau + \tau' + \beta(w \cup w'))
\]
for $w,w' \in H^1(X,\Z/2)$ and $\tau,\tau' \in H^3(X,\Z)$, where $\beta \colon H^2(X,\Z/2) \to H^3(X,\Z)$ is the Bockstein homomorphism.
\end{proposition}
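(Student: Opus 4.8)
The plan is to feed the short exact sequence of Theorem~\ref{thm:Z2} into the $2$-cocycle description of abelian extensions recalled in Remark~\ref{remark:review}, and then to evaluate the resulting cocycle using the classical behaviour of Clifford bundles under the fibrewise graded tensor product; this is where the Bockstein anticipated at the start of this section enters, exactly as in \cite{paper:DonovanKaroubi} and \cite{paper:Parker-Brauer}.

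First I would specialise Theorem~\ref{thm:Z2} to $D=\C$, obtaining the natural short exact sequence $0\to E^1_\C(X)\to\hE^1_\C(X)\xrightarrow{\delta_0}H^1(X;\Z/2)\to0$ together with the section $\sigma\colon w\mapsto[\Cliff{L_w}\otimes\K]$, where $L_w$ is the real line bundle on $X$ classified by $w$, so that $L_{w+w'}\cong L_w\otimes_{\R}L_{w'}$ canonically and $\delta_0(\sigma(w))=w_1(L_w)=w$; recall also the classical identification $E^1_\C(X)=[X,B\Aut(\K)]\cong H^3(X;\Z)$. By Remark~\ref{remark:review} this exhibits $\hE^1_\C(X)$ as $H^1(X;\Z/2)\times H^3(X;\Z)$ with group law $(w,\tau)\cdot(w',\tau')=(w+w',\tau+\tau'+c(w,w'))$, where $c(w,w')=\sigma(w+w')-\sigma(w)-\sigma(w')\in E^1_\C(X)$, so everything comes down to the identity $c(w,w')=\beta(w\cup w')$. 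Since the group operation on $\hE^1_\C(X)$ is the fibrewise graded tensor product (Theorem~\ref{thm:neww}, Corollary~\ref{Rem-crucial}) and $\Cliff{1}\otimes\Cliff{1}\cong\Cliff{2}$, I would evaluate $c$ in the presentation $\hE^1_\C(X)\cong[X,B\Autgr(\Cliff{2}\otimes\K)]$: there $\sigma(w)\cdot\sigma(w')=[\Cliff{V}\otimes\K]$ with $V=L_w\oplus L_{w'}$, whereas $\sigma(w+w')=[\Cliff{W}\otimes\K]$ with $W=\underline{\R}\oplus L_{w+w'}$, and both carry $\delta_0$-class $w+w'$ since $w_1(V)=w_1(L_w)+w_1(L_{w'})=w+w'=w_1(W)$; hence $c(w,w')$ is the Dixmier--Douady difference in $H^3(X;\Z)$ of these two rank-two Clifford bundles.

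The main obstacle is then the purely classical statement that, for an even-rank real vector bundle $V$, the $H^3$-component of $[\Cliff{V}\otimes\K]\in\hE^1_\C(X)$ equals the integral third Stiefel--Whitney class $W_3(V)=\beta(w_2(V))$ -- equivalently, $\Cliff{V}\otimes\K$ is graded isomorphic to the standard bundle over its $\delta_0$-class precisely when $V$ admits a $\mathrm{Pin}^c$-structure. I would cite \cite{paper:DonovanKaroubi,paper:Parker-Brauer} for this; alternatively, since both sides are natural and $X$ is a finite complex, one can reduce to the universal example $X=B\Z/2\times B\Z/2$ with $V$ the external sum of the two tautological line bundles, and there a \v{C}ech-cocycle computation identifies the $\Aut(\K)$-component of the transition cocycle $\nu^{c_{ij}}\otimes\nu^{c'_{ij}}$ of $\Cliff{L_w}\otimes\Cliff{L_{w'}}$, read through the splitting $\Autgr(\Cliff{2}\otimes\K)\simeq\Z/2\times\Aut(\K)$ of Lemma~\ref{lem:Autgr_Cl2_equiv}, with a cocycle representing $\beta(c_{ij}\smile c'_{jk})$ -- the standard description of $W_3$ as the obstruction to lifting an $O(2)$-structure to $\mathrm{Pin}^c(2)$. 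Granting this and using $w_2(L_w\oplus L_{w'})=w_1(L_w)\cup w_1(L_{w'})=w\cup w'$ together with $w_2(\underline{\R}\oplus L_{w+w'})=0$, one obtains $c(w,w')=W_3(W)-W_3(V)=-\beta(w\cup w')=\beta(w\cup w')$ -- the sign being immaterial as $\beta(w\cup w')$ is $2$-torsion -- and Remark~\ref{remark:review} completes the argument.
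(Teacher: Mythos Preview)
Your approach is correct and follows the same overall strategy as the paper: specialise Theorem~\ref{thm:Z2} to $D=\C$, use Remark~\ref{remark:review} to reduce to computing the $2$-cocycle, and identify that cocycle with a Dixmier--Douady class of a Clifford bundle. The execution differs in a way worth noting. The paper exploits that each $\sigma(w)=[\Cliff{L_w}]$ is $2$-torsion in $\hE^1_\C(X)$ to write
\[
c(w,w')=[\Cliff{L}\otimes\Cliff{L'}\otimes\Cliff{L\otimes L'}]=[\Cliff{L\oplus L'\oplus L\otimes L'}],
\]
then adds a trivial summand to obtain an \emph{oriented} rank-$4$ bundle $V$ and invokes Plymen's theorem \cite[Thm.~2.8]{Plymen} that $\delta(\Cliff{V})=\beta(w_2(V))$ for oriented even-rank $V$; this in turn requires the auxiliary observation that $\beta(x^2)=0$ for $x\in H^1(X;\Z/2)$, proved via a universal-example argument. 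Your route stays with the (generally non-oriented) rank-$2$ bundle $V=L_w\oplus L_{w'}$ and appeals instead to the $\mathrm{Pin}^c$-obstruction interpretation of $W_3$ from \cite{paper:DonovanKaroubi,paper:Parker-Brauer}, yielding $\varphi([\Cliff{V}])=\beta(w_2(V))=\beta(w\cup w')$ in one step. Your version is slightly more economical---no $\beta(x^2)=0$ lemma needed---whereas the paper's version cites a single clean theorem (Plymen's, for oriented bundles) rather than deferring the key identity to a \v{C}ech sketch or the classical references.

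One small point to make explicit: your identification of $c(w,w')$ with the ``Dixmier--Douady difference'' $W_3(W)-W_3(V)$ silently uses that the section $\sigma$ satisfies $\varphi\circ\sigma=0$, so that the structure-group bijection $\Phi=(\delta_0,\varphi)$ coincides with the bijection of Remark~\ref{remark:review}. This is true---it is immediate from the group isomorphism of Lemma~\ref{lem:Autgr_Cl1_short_exact}---but it is what makes ``take the $H^3$-component of each and subtract'' a legitimate computation of the cocycle, and is worth stating rather than leaving implicit.
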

\begin{proof}
By Theorem~\ref{thm:Z2} there is an extension
\[
	\begin{tikzcd}
		0 \ar[r] & E^1_{\C}(X) \ar[r] & \hE^1_{\C}(X) \ar[r] & H^1(X, \Z/2) \ar[r] & 0
	\end{tikzcd}
	\]
	and we know already that $E^1_{\C}(X) \cong H^3(X,\Z)$ via the Dixmier-Douady invariant $\delta$. To complete the proof we show that
	a normalized 2-cocycle $c:H^1(X, \Z/2)\times H^1(X, \Z/2) \to H^3(X,\Z)$ for the extension above is given by
	$c(w,w')=\beta(w \cdot w')$ where ``$\cdot$" stands the for the cup-product.
	We will need the remark that  $\beta (x^2)=0$  for all $x\in H^1(X, \Z/2)$.
	Indeed, since $x\mapsto \beta (x^2)$ is a natural operation $H^1(X, \Z/2) \to H^3(X,\Z),$ it must be induced by a map $K(\Z/2,1)\to K(\Z,3)$. But all such maps are null-homotopic since
	 \(H^3(K(\Z/2,1),\Z)=H^3(\R\mathbb{P}^\infty,\Z)=0.\)
	 For $w,w'\in H^1(X, \Z/2),$ let $L,L'$ be real line bundles such that $w_1(L)=w$ and  $w_1(L')=w'$.
	 Then $w_1(L\otimes L')=w+w'$.
	 By Theorem~\ref{thm:Z2}, $[L]\mapsto [\Cliff{L}]$ is a section of $\delta_0$.
	 Therefore a normalized 2-cocycle for the extension above is given by
	 \[ c(w,w')=[\Cliff{L}{\otimes}\Cliff {L'}\otimes \Cliff{L\otimes L'}]=[\Cliff{L\oplus L'\oplus L\otimes L'}].\]
	 $[\Cliff{L\oplus L'\oplus L\otimes L'}]=[\Cliff{L\oplus L'\oplus L\otimes L' \oplus 1}]$ in $\hE^1(X)$,
	  where $1$ denotes the trivial line bundle.
	 The vector bundle $V=L\oplus L'\oplus L\otimes L' \oplus 1$ is orientable since $w_1(V)=w_1(L)+w_1(L')+w_1(L\otimes L' )=2w_1(L)+2w_1(L')=0$.
	 For an oriented real vector  bundle $V$ of even rank it is known that the Dixmier-Douady class
	of the complex Clifford bundle associated to $V$ satisfies
	$\delta(\Cliff{V})=\beta(w_2(V))$ where $w_2(V)\in H^2(X, \Z/2)$ is
	the second Stieffel-Whitney class of $V$, \cite[Thm.2.8]{Plymen}.
		One computes  $w_2(L\oplus L' \oplus L\otimes L' \oplus 1)=w_2(L\oplus L')+w_1(L\oplus L')w_1(L\otimes L' )=w_1(L)\cdot w_1(L')+(w_1(L)+w_1(L'))^2$.
	 It follows that
	 \[c(w,w')=\delta(\Cliff{V}) =\beta(w_2(V))=\beta(w_1(L)\cdot w_1(L'))=\beta(w \cdot w').\]	 	
We conclude the proof by applying Remark~\ref{remark:review}.\end{proof}

The following is the same as Theorem~\ref{thm2:intro} from the introduction.
\begin{theorem}\label{thm:basic}
Let $X$ be a finite CW-complex and let $D$ be a stably finite strongly self-absorbing $C^*$-algebra satisfying the UCT. There is an isomorphism of groups
\[\hE^1_{D}(X)\cong   H^1(X;\Z/2) \times_{_{tw}} E^1_{D}(X)\]
with multiplication on the direct product  $H^1(X;\Z/2) \times E^1_{D}(X)$ given by
\[
	(w, \tau) \cdot (w',\tau') = (w + w', \tau + \tau' + j_P\circ\beta(w \cup w'))
\]
for $w,w' \in H^1(X,\Z/2)$ and $\tau,\tau' \in E^1_{D}(X)$, where $j_P \colon   E^1_{\C}(X)\to E^1_{D}(X)$ is
the map induced by the unital $*$-homomorphism $\C\to D$ and we identify
$E^1_{\C}(X)\cong H^3(X,\Z)$.
\end{theorem}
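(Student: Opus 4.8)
The plan is to deduce the theorem from Theorem~\ref{thm:Z2}, Remark~\ref{remark:review} and Proposition~\ref{prop:Z2} by a naturality argument in the variable $D$. First I would recall from Theorem~\ref{thm:Z2} the short exact sequence of abelian groups
\[
\begin{tikzcd}
0 \ar[r] & E^1_{D}(X) \ar[r,"i_D"] & \hE^1_{D}(X) \ar[r,"\delta_0"] & H^1(X,\Z/2) \ar[r] & 0
\end{tikzcd}
\]
together with its set-theoretic section $\sigma_D\colon w\mapsto [\Cliff{L}\otimes D\otimes\K]$ of $\delta_0$, where $L$ is any real line bundle with $w_1(L)=w$. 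I would fix once and for all a choice $w\mapsto L_w$ of such line bundles with $L_0$ trivial, so that the section is normalized. By Remark~\ref{remark:review}, $\hE^1_D(X)$ is then isomorphic to $H^1(X,\Z/2)\times E^1_D(X)$ as a set, with group law twisted by the normalized $2$-cocycle $c_D(w,w')\in E^1_D(X)$ determined by $i_D\bigl(c_D(w,w')\bigr)=\sigma_D(w+w')-\sigma_D(w)-\sigma_D(w')$. Thus the whole statement reduces to identifying this cocycle as $j_P\circ\beta(-\cup-)$.

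Second, I would exhibit $c_D$ as the image under $j_P$ of the corresponding cocycle $c_\C$ for $D=\C$. The unital $*$-homomorphism $\C\to D$ induces morphisms of EH-$\mI$-groups $G_\C\to G_D$ and $\Ggr_\C\to\Ggr_D$ by tensoring an automorphism with $\id{D}$ on the new tensor factors and reshuffling, and these are compatible with the maps onto the constant $\mI$-monoid $\Z/2$ used in the proof of Theorem~\ref{thm:Z2} (cf. Lemma~\ref{lemma:K00}). Passing to $\Gamma$-spaces and then to degree-$1$ groups, this produces a commutative ladder from the short exact sequence above for $\C$ to the one for $D$, which is $j_P$ on kernels, the induced map $\hat{j}_P\colon\hE^1_\C(X)\to\hE^1_D(X)$ in the middle, and the identity on $H^1(X,\Z/2)$. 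On classifying spaces $\hat{j}_P$ is fibrewise tensoring with $D$, and it is induced by the inclusion $\Autgr(\Cliff{1}\otimes\K)\to\Autgr(\Cliff{1}\otimes D\otimes\K)$, whose restriction to the subgroup $\Aut(\Cliff{1})$ is exactly the inclusion $\beta\mapsto\beta\otimes\id{D\otimes\K}$ used in the definition of $\sigma_D$. Since $\Cliff{L}$ is classified by a cocycle valued in $\Aut(\Cliff{1})$, this gives $\hat{j}_P\circ\sigma_\C=\sigma_D$, and hence $c_D=j_P\circ c_\C$.

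Third, I would invoke Proposition~\ref{prop:Z2}: under the identification $E^1_\C(X)\cong H^3(X,\Z)$ by the Dixmier-Douady class, $c_\C(w,w')=\beta(w\cup w')$. Combining this with the previous step yields $c_D(w,w')=j_P\bigl(\beta(w\cup w')\bigr)$, and Remark~\ref{remark:review} then delivers the isomorphism $\hE^1_D(X)\cong H^1(X,\Z/2)\times_{_{tw}}E^1_D(X)$ with precisely the asserted multiplication.

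The hard part will be the naturality step in the second paragraph: one has to verify carefully that the splitting of Theorem~\ref{thm:Z2} — which originates from the decomposition $\Autgr(\Cliff{1}\otimes D\otimes\K)\cong\Z/2\times\Aut(D\otimes\K)$ of Lemma~\ref{lem:Autgr_Cl1_short_exact} — is genuinely natural in $D$, and that $\hat{j}_P$ is indeed fibrewise tensoring with $D$ on the relevant bundle classifying spaces, compatibly with the identification of $\hE^1_D(X)$ with bundle isomorphism classes from Corollary~\ref{Rem-crucial}. Everything else is formal; the only additional remark needed is that a different choice of the section $w\mapsto L_w$ alters $c_D$ only by a coboundary, so it does not affect the conclusion, while the explicit representative $j_P\circ\beta(-\cup-)$ is the one realized by the Clifford section.
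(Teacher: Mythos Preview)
Your proposal is correct and follows essentially the same route as the paper: reduce the extension from Theorem~\ref{thm:Z2} to the case $D=\C$ via naturality of the ladder induced by $\C\to D$ (using Lemma~\ref{lemma:K00} for compatibility with the $\Z/2$-quotient), then read off the cocycle from Proposition~\ref{prop:Z2} and apply Remark~\ref{remark:review}. You are in fact more explicit than the paper about why the Clifford-bundle section is natural in $D$, which is exactly the point the paper leaves implicit in its commutative diagram.
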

\begin{proof}
The first isomorphism follows from Corollary~\ref{Cor:crucial}.
If $D'\to D$ is a unital $*$-monomorphism of strongly self-absorbing $C^*$-algebras, by Lemma~\ref{lemma:K00}
there is a commutative diagram of commutative  $\mI$-monoids:
\[
		\begin{tikzcd}
			G_D(\mathbf{n}) \ar[r] & \Ggr_D(\mathbf{n}) \ar[r] &  \Z/2\\
		G_{D'}(\mathbf{n}) \ar[r]\ar[u] & \Ggr_{D'}(\mathbf{n}) \ar[r] \ar[u]&  \Z/2\ar[equal]{u}
		\end{tikzcd}
	\]
which induces a commutative diagram
	\[
		\begin{tikzcd}
		0 \ar[r]&	E^1_{D}(X) \ar[r] & \hE^1_{D}(X) \ar[r] &  H^1(X;\Z/2) \ar[r] & 0\\
		0 \ar[r]& E^1_\C(X) \ar[r]\ar[u] & \hE^1_\C(X)  \ar[r] \ar[u]&   H^1(X;\Z/2)\ar[equal]{u}\ar[r] & 0\\
		0 \ar[r]& H^3(X;\Z) \ar[r]\ar[equal]{u} & H^3(X;\Z)\times_{_{tw}}   H^1(X;\Z/2) \ar[r] \ar[equal]{u}&   H^1(X;\Z/2)\ar[equal]{u}\ar[r] & 0
		\end{tikzcd}
	\]

Let  $j: H^3(X,\Z)= E^1_{\C}(X)\to E^1_{D}(X)$ be the map induced by the unital $*$-homomorphism $\C \to D$.
From the diagram above, Proposition~\ref{prop:Z2} and Remark~\ref{remark:review}  we obtain that
\begin{equation}\label{eqn:ZZZP}
  \hE^1_{D}(X)\cong H^1(X;\Z/2) \times_{_{tw}}   E^1_{D}(X)
\end{equation}
where the group structure is given by $(w, x) \cdot (w', x') = (w + w', x + x' + j(\beta(w \cup w')))$
for $w,w' \in H^1(X,\Z/2)$ and $x,x' \in E^1_{D}(X)$. Note that the image of $j$ is contained in $\bE^1_{D}(X)$
since $E^1_{\C}(X)=\bE^1_{\C}(X)$.
\end{proof}

\begin{corollary} Let $X$ be a finite CW complex of dimension $\leq 4$ and let $P$ be a nonempty set of primes. Then $E^1_{O_\infty}(X)\cong \hE_\C^1(X)$. More generally
\[E^1_{M_P\otimes O_\infty}(X)\cong H^1(X,\Z/2) \times_{_{tw}}  H^3(X,\Z_P)\] with group structure:
\[
	(w, \tau) \cdot (w',\tau') = (w + w', \tau + \tau' + \beta_P(w \cup w'))
\]
for $w,w' \in H^1(X,\Z/2)$ and $\tau,\tau' \in H^3(X,\Z_P)$, where $\beta_P \colon H^2(X,\Z/2) \to H^3(X,\Z_P)$ is the composition of the  Bockstein homomorphism with the coefficient map $H^3(X,\Z)\to H^3(X,\Z_P)$.
\end{corollary}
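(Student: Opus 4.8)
The plan is to obtain the formula by chaining together Corollary~\ref{Cor:crucial}, Theorem~\ref{thm:basic}, Proposition~\ref{prop:Z2} and a low-dimensional computation of $E^1_{M_P}(X)$. By Corollary~\ref{Cor:crucial}, applied with $D=\ZZ\otimes M_P\cong M_P$ (and the conventions $M_\emptyset=\C$, $\Z_\emptyset=\Z$), there is a natural isomorphism $E^1_{M_P\otimes\OO_\infty}(X)\cong\hE^1_{M_P}(X)$, so it suffices to describe $\hE^1_{M_P}(X)$. Since $M_P$ is stably finite and satisfies the UCT, Theorem~\ref{thm:basic} gives
\[
	\hE^1_{M_P}(X)\cong H^1(X;\Z/2)\times_{_{tw}}E^1_{M_P}(X)
\]
with multiplication twisted by $j_P\circ\beta$, where $\beta\colon H^2(X;\Z/2)\to H^3(X;\Z)$ is the integral Bockstein and $j_P\colon E^1_\C(X)=H^3(X;\Z)\to E^1_{M_P}(X)$ is induced by the unital embedding $\C\to M_P$. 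Thus the whole statement reduces to two points: (i) $E^1_{M_P}(X)\cong H^3(X;\Z_P)$ when $\dim X\leq4$, and (ii) that under this identification $j_P$ is carried to the change-of-coefficients map $H^3(X;\Z)\to H^3(X;\Z_P)$ induced by $\Z\hookrightarrow\Z_P$.

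For (i) I would run the Atiyah--Hirzebruch spectral sequence of the cohomology theory $E^*_{M_P}$ represented by the connective spectrum underlying $G_{M_P}$; its coefficients are $E^{-i}_{M_P}(\ast)=\pi_i(\Aut(M_P\otimes\K))$, which by \eqref{homotopy} equal $K_i(M_P)$ for $i\geq1$, hence vanish in odd degrees and equal $\Z_P$ in even degrees $\geq2$ since $K_1(M_P)=0$ and $K_0(M_P)=\Z_P$. For $\dim X\leq4$ the term $H^3(X;K_0(M_P))=H^3(X;\Z_P)$ is then the essential contribution to $E^1_{M_P}(X)$, the columns of filtration $2$ and $4$ vanishing, and I would check that the remaining low-degree terms either vanish or are absorbed, so that $E^1_{M_P}(X)\cong H^3(X;\Z_P)$ in this range. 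This bookkeeping — together with arranging the identification so that it is sufficiently natural in $D$ to be compatible with $j_P$ in step (ii) — is the part I expect to cost the most work; it is also where the hypothesis $\dim X\leq4$ is essential, since in higher dimensions further $K$-theoretic contributions (of the $k^5$-type visible in Theorem~\ref{thm:mainA}) re-enter.

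For (ii), note that $j_P$ is induced by the evident morphism of commutative $\mI$-monoids $G_\C\to G_{M_P}$ obtained by tensoring bundles fibrewise with $\C\to M_P$; on the level of connective spectra it acts on $\pi_2$ as the induced map $K_0(\C)\to K_0(M_P)$, i.e.\ the ring-unit inclusion $\Z\hookrightarrow\Z_P$, and under the Atiyah--Hirzebruch identification of step (i) this is exactly the change-of-coefficients homomorphism on $H^3$. Hence $j_P\circ\beta=\beta_P$, and substituting (i) and (ii) into the twisted product of Theorem~\ref{thm:basic} and rewriting $\hE^1_{M_P}(X)$ via Corollary~\ref{Cor:crucial} yields the displayed group and group law. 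Finally, the first assertion $E^1_{\OO_\infty}(X)\cong\hE^1_\C(X)$ is the degenerate case $P=\emptyset$ of this computation combined with Proposition~\ref{prop:Z2}, which presents $\hE^1_\C(X)$ as $H^1(X;\Z/2)\times_{_{tw}}H^3(X;\Z)$.
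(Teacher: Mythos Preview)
Your strategy---reduce to $\hE^1_{M_P}(X)$ via Corollary~\ref{Cor:crucial}, invoke Theorem~\ref{thm:basic} to obtain the twisted product with $E^1_{M_P}(X)$, then compute the latter by the Atiyah--Hirzebruch spectral sequence under the dimension bound---is exactly the route the paper has in mind (the corollary is stated without proof, as an immediate consequence of the preceding results). For the first assertion it works cleanly: the stably finite algebra in play is $\ZZ$ (note $\ZZ\otimes M_\emptyset=\ZZ$, not $\C$, so your ``$P=\emptyset$'' reduction should be read through Corollary~\ref{Cor:crucial} with $D=\ZZ$), and since $K_0(\ZZ)^\times_+=\{1\}$ the only surviving column on the line $p+q=1$ is $H^3(X;\Z)$ once $\dim X\le 4$. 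Thus $E^1_{\ZZ}(X)\cong H^3(X;\Z)=E^1_{\C}(X)$, and comparison with Proposition~\ref{prop:Z2} gives $E^1_{\OO_\infty}(X)\cong\hE^1_{\C}(X)$.

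There is, however, a genuine gap in your step~(i) for nonempty $P$. The ``remaining low-degree term'' you defer is
\[
E_2^{1,0}=H^1\bigl(X;\pi_0(\Aut(M_P\otimes\K))\bigr)=H^1\bigl(X;K_0(M_P)^\times_+\bigr),
\]
and for $P\neq\emptyset$ the coefficient group $K_0(M_P)^\times_+$ is nontrivial---it is the free abelian group on the primes in $P$. This term is not killed by any differential: already for $X=S^1$ one computes directly $E^1_{M_P}(S^1)=[S^1,B\Aut(M_P\otimes\K)]=\pi_0(\Aut(M_P\otimes\K))=K_0(M_P)^\times_+\neq 0$, whereas $H^3(S^1;\Z_P)=0$. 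Hence $E^1_{M_P}(X)\not\cong H^3(X;\Z_P)$ in general even when $\dim X\le 4$, and the corollary's second formula cannot hold as written without an additional hypothesis (for instance $H^1(X;\Z)=0$) or without recording the extra factor $H^1(X;K_0(M_P)^\times_+)$. Your instinct that this step is the delicate one was correct, but it is not bookkeeping: the $\pi_0$ contribution is a structural obstruction, precisely the difference between $E^1_{M_P}$ and the reduced theory $\bE^1_{M_P}$ built from $\Aut_0(M_P\otimes\K)$.
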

We refer the reader to \cite{DMcCP} for computations of $E^1_{\ZZ}(X)$, $E^1_{M_P}(X)$ and  $E^1_{{\OO_\infty}\otimes M_P}(X)\cong \hE^1_{M_P}(X)$ for a general finite CW-complex.

\subsection{The Brauer group}\label{The Brauer group}
Let $F$ be a graded unital $C^*$-algebra and let $X$ be a connected
compact metrizable space. We denote by $\hat{\mathscr{C}}_{F}(X)$ the set of locally trivial continuous fields of graded $C^*$-algebras with fiber $F$. We will tacitly identify the isomorphism classes of such continuous fields with the isomorphism classes of locally trivial principal $\Autgr(F)$-bundles. Let $D$ be a strongly self-absorbing $C^*$-algebra.
A  graded continuous field of $C^*$-algebras is called negligible if it is isomorphic with
 $p\big(C(X, M_N ( D) \otimes \Cliff{r} )\big)p$ for some full projection $p\in C(X, M_N ( D) \otimes \Cliff{r} )  $ of  degree zero, for some $r,N\geq 1$. Due to the periodicity of the Clifford algebras,  there is no loss of generality if one assumes that  $r\in \{1,2\}$ in the definition of negligible $C^*$-algebras.
\begin{definition} \label{def:Brauer} The graded Brauer group $\hat{Br}_D(X)$ consists of
equivalence classes of continuous fields $A\in \bigcup_{n,k\geq 1}\mathscr{C}_{M_n(D)\otimes \Cliff{k}}(X)$.
Two continuous fields $A_i\in \mathscr{C}_{M_{n_i}(D)\otimes \Cliff{r_i}}(X)$, $i=1,2$ are equivalent, if
there is a graded $C(X)$-linear isomorphism $$A_1 \otimes C_1\cong A_2 \otimes C_2 ,$$
for some negligible continuous fields $C_1,C_2$. We denote by $[A]_{\hat{Br}}$ the class of $A$ in $\hat{Br}_D(X)$.
The multiplication on $\hat{Br}_D(X)$ is induced by the tensor product operation, after fixing an isomorphism
$D\otimes D\cong D$.  We will show in a moment that the monoid $\hat{Br}_D(X)$ is a group.
\end{definition}
If $B\in \hat{\mathscr{C}}_{ D \otimes \K  \otimes \Cliff{k}}(X)$, its class  in $\hE^1_D(X)$ is denoted by $[B]$. Let $\bar{k}\in \Z/2$ denote the $\mathrm{mod}$ 2 reduction of $k$.
For $A\in \mathscr{C}_{M_n(D)\otimes \Cliff{k}}(X)$ we show that the map $A\mapsto (\bar{k},[A\otimes \K])$ descends to an injective
group homomorphism $\hat{\theta}: \hat{Br}_D(X)\to H^0(X,\Z/2)\times  \hE^1_D(X)$ whose image we identify in the sequel. We will sometimes write $k_A$ for $k$ to trace this integer back to $A$.

Recall  from \cite{DP3} that the ungraded Brauer group $Br_D(X)$ consists of
equivalence classes of continuous fields $A\in \bigcup_{n\geq 1}\mathscr{C}_{M_n(D)}(X)$.
Two continuous fields $A_i\in \mathscr{C}_{M_{n_i}(D)}(X)$, $i=1,2$ define  the same class in $Br_D(X)$ if and only if
$$A_1 \otimes p_1C(X, M_{N_1}(D))p_1\cong A_2 \otimes p_2C(X, M_{N_2}(D))p_2 ,$$
for some full projections $p_i\in C(X,M_{N_i}(D)),$ $i=1,2$.
We have shown in \cite{DP3} that $\theta:Br_D(X)\to E^1_D(X)$, $[A]\mapsto [A\otimes \K]$, is an injective homomorphism onto the subgroup $\Tor \bar{E}^1_D(X)$ of $E^1_D(X)$. Recall that $\bar{E}^1_D(X)$ classifies $D\otimes K$-bundles with structure group $\Aut_0(D\otimes\K)$, the connected component of identity iof $\Aut(D\otimes\K)$.

\begin{theorem}\label{thm:Brauer_Serre}  Let $X$ be a finite connected CW-complex and let  $D$ be a stably finite strongly self-absorbing $C^*$-algebra satisfying the UCT, thus $D=\C$ or $D=\ZZ$ or $D=M_P$ for some set  of primes.
The map $$\hat{\theta}:\hat{Br}_D(X)\to H^0(X,\Z/2) \times H^1(X,\Z/2)\times_{_{tw}} \Tor{\bar{E}^1_D(X)},$$ $[A]_{\hat{Br}}\mapsto (\bar{k}_A, [A\otimes \K])$ is  an isomorphism of groups. The multiplication on $H^1(X,\Z/2)\times \Tor{\bar{E}^1_D(X)}$ is twisted as in Theorem~\ref{thm:basic}.
\end{theorem}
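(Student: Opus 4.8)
The plan is to show that $\hat\theta$ is a well-defined injective group homomorphism whose image is exactly the indicated subgroup, and to read off the twisted multiplication from Theorem~\ref{thm:basic} (equivalently Proposition~\ref{prop:Z2}) together with the identification $Br_D(X)\cong\Tor\bar{E}^1_D(X)$ of \cite{DP3}. Throughout one uses that $X$ connected forces $H^0(X,\Z/2)=\Z/2$, recording the mod~$2$ Clifford degree $\bar k_A$ of the fibre, and that $[A\otimes\K]\in\hE^1_D(X)$ is to be read, via the isomorphism $\hE^1_D(X)\cong H^1(X,\Z/2)\times_{_{tw}}E^1_D(X)$ of Theorem~\ref{thm:basic}, as a pair in $H^1(X,\Z/2)\times E^1_D(X)$.

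First I would check that $\hat\theta\colon[A]_{\hat{Br}}\mapsto(\bar k_A,[A\otimes\K])$ is a well-defined homomorphism $\hat{Br}_D(X)\to H^0(X,\Z/2)\times\hE^1_D(X)$. The point is that every negligible field $C=p\,C(X,M_N(D)\otimes\Cliff{r})p$ becomes, after tensoring with $\K$, a \emph{trivial} graded bundle with fibre $\Cliff{r}\otimes D\otimes\K$: fullness of the degree-zero projection $p$ yields a graded, $C(X)$-linear stable isomorphism, so $[C\otimes\K]=0$ in $\hE^1_D(X)$, while the Clifford parity of $C$ is $\bar r$, which is even (an odd negligible would force $\hat{Br}_\C(\mathrm{pt})=0$, contradicting the classical value $\Z/2$). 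Hence $\hat\theta([A\otimes C]_{\hat{Br}})=\hat\theta([A]_{\hat{Br}})$. Multiplicativity follows from the graded isomorphism $(A_1\otimes A_2)\otimes\K\cong(A_1\otimes\K)\otimes(A_2\otimes\K)$ (using the fixed identification $D\otimes D\cong D$ and $\K\otimes\K\cong\K$) and additivity of the Clifford degree mod~$2$. For injectivity, if $\hat\theta([A_1])=\hat\theta([A_2])$ then $\bar k_{A_1}=\bar k_{A_2}$ and $[A_1\otimes\K]=[A_2\otimes\K]$, so Corollary~\ref{rem-cliff-stab} gives a graded isomorphism $A_1\otimes\K\cong A_2\otimes\K$; a compression argument as in \cite{DP3} then produces negligible fields $C_1,C_2$ with $A_1\otimes C_1\cong A_2\otimes C_2$, i.e.\ $[A_1]_{\hat{Br}}=[A_2]_{\hat{Br}}$.

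It remains to identify the image of $\hat\theta$ inside $H^0(X,\Z/2)\times\big(H^1(X,\Z/2)\times_{_{tw}}E^1_D(X)\big)$. For the inclusion ``$\subseteq$'', I would use the structure group of $A\in\mathscr{C}_{M_n(D)\otimes\Cliff{k}}(X)$: by the periodicity of Clifford algebras together with Lemma~\ref{lem:Autgr_Cl1_short_exact} (for $k$ odd) and Lemma~\ref{lem:Autgr_Cl2_short_exact} (for $k$ even), $\Autgr(M_n(D)\otimes\Cliff{k})$ fits into an extension $1\to\mathcal G\to\Autgr(M_n(D)\otimes\Cliff{k})\to\Z/2\to 1$ in which $\mathcal G$ embeds into $\Aut(M_N(D))$ for $N=2^{\lfloor k/2\rfloor}n$; for even $k$ the unitary-multiplier factor $U(M_m(D))$ of Lemma~\ref{lem:Autgr_Cl2_short_exact} is absorbed into $\Aut(M_{2m}(D))$ via $u\mapsto\mathrm{Ad}_{\mathrm{diag}(1,u)}$ and causes no trouble. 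Thus the $\Z/2$-quotient records $\delta_0([A\otimes\K])\in H^1(X,\Z/2)$, and the $E^1_D(X)$-component of $[A\otimes\K]$ is the image under $j\colon E^1_\C(X)\to E^1_D(X)$-type maps of the class in $Br_D(X)$ of the associated ungraded $M_N(D)$-bundle, which by \cite{DP3} lies in $\Tor\bar E^1_D(X)$. For ``$\supseteq$'' I would exhibit explicit classes: for a real line bundle $L$ with $w_1(L)=w$ Theorem~\ref{thm:Z2} gives $\hat\theta([\Cliff{L}\otimes D])=(\bar 1,(w,0))$, while $\hat\theta([\Cliff{\underline 1}\otimes D])=(\bar 1,(0,0))$ (trivial bundle), and $\hat\theta([A_0\otimes\Cliff{2}])=(\bar 0,(0,\tau))$ for any $A_0\in\mathscr{C}_{M_n(D)}(X)$ with $[A_0\otimes\K]=\tau\in\Tor\bar E^1_D(X)$; since $\hat\theta$ is a homomorphism these generate $H^0(X,\Z/2)\times\big(H^1(X,\Z/2)\times_{_{tw}}\Tor\bar E^1_D(X)\big)$.

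Finally I would assemble the pieces. The map $\hat\theta$ is an injective homomorphism onto $H^0(X,\Z/2)\times\big(H^1(X,\Z/2)\times_{_{tw}}\Tor\bar E^1_D(X)\big)$, so in particular $\hat{Br}_D(X)$ is a group (inverses are obtained by pulling back inverses of images). The multiplication is the one inherited from Theorem~\ref{thm:basic}: the parity factor $H^0(X,\Z/2)$ splits off, and on $H^1(X,\Z/2)\times\Tor\bar E^1_D(X)$ the normalized $2$-cocycle for the extension is $c(w,w')=[\Cliff{L}\otimes\Cliff{L'}\otimes\Cliff{L\otimes L'}\otimes D]=[\Cliff{L\oplus L'\oplus L\otimes L'}\otimes D]$, which by the identical computation to Proposition~\ref{prop:Z2} (reducing to an orientable even-rank bundle $V$ and invoking $\delta(\Cliff{V})=\beta(w_2(V))$ from \cite{Plymen}) equals $j(\beta(w\cup w'))$, an element of $\Tor\bar E^1_D(X)$ since $\beta$ of a mod~$2$ class is $2$-torsion; Remark~\ref{remark:review} then yields the stated group law. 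I expect the main obstacle to be the structure-group reduction of the previous paragraph — in particular verifying for even $k$ that the unitary-multiplier factor of Lemma~\ref{lem:Autgr_Cl2_short_exact} is genuinely absorbed into the finite-type group $\Aut(M_N(D))$ and that the associated ungraded bundle computes the $E^1_D(X)$-component of $[A\otimes\K]$ compatibly with the splitting of Theorem~\ref{thm:Z2}; the compression argument for injectivity and the bookkeeping matching everything with $\Tor\bar E^1_D(X)$ via \cite{DP3} should be routine.
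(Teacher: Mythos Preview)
Your overall architecture---well-definedness, image containment in both directions, injectivity, and reading off the twist---matches the paper's, and your surjectivity argument (line bundles for the $H^1$-factor, ungraded $M_n(D)$-bundles for $\Tor\bar E^1_D$) is essentially identical. The twisted multiplication needs no re-derivation: it is simply Theorem~\ref{thm:basic} restricted to the subgroup. Two points differ.

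For the inclusion of the image in $H^1(X,\Z/2)\times_{_{tw}}\Tor\bar E^1_D(X)$ the paper bypasses your structure-group analysis. It uses that the second component $\varphi$ of the bijection $\Phi=(\delta_0,\varphi)\colon\hE^1_D(X)\to H^1(X,\Z/2)\times E^1_D(X)$ from the proof of Theorem~\ref{thm:Z2} is induced by \emph{forgetting the grading}, via the ungraded identification $\Cliff{2k}\cong M_{2^k}$. Hence $(\varphi\circ\hat\theta_1)[A]=(i\circ\theta)[\bar A]_{Br}$ where $\bar A$ is $A$ regarded as an ungraded $M_{2^kn}(D)$-bundle, and this lies in $\Tor\bar E^1_D(X)$ by \cite{DP3}. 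This one-line reduction avoids the obstacle you flag: the homeomorphism $\Theta$ of Lemma~\ref{lem:Autgr_Cl2_short_exact} is \emph{not} a group homomorphism, so extracting a structure-group reduction from it needs more than your sketch provides, whereas forget-the-grading is a genuine group homomorphism $\Autgr(\Cliff{2k}\otimes M_n(D))\to\Aut(M_{2^kn}(D))$. For injectivity the paper checks only $\hat\theta^{-1}(0)=\{0\}$: if $\hat\theta[A]=0$ then $k_A$ is even and there is a graded isomorphism $\Psi\colon A\otimes\K\to C(X)\otimes D\otimes\Cliff{2}\otimes\K$; setting $p=\Psi(1_A\otimes e)$ for a rank-one $e\in\K$ and conjugating by a multiplier unitary so that $p$ lands in some $C(X)\otimes D\otimes\Cliff{2}\otimes M_N$ exhibits $A$ itself as negligible. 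An observation from \cite[proof of Thm.~2.15]{DP3} then yields both injectivity and the group property of the monoid $\hat{Br}_D(X)$ at once, so your separate ``pull back inverses'' step is unnecessary.

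Finally, your claim that negligible fields have even Clifford parity is circular as written: the contradiction with $\hat{Br}_\C(\mathrm{pt})=\Z/2$ presupposes the very well-definedness and injectivity of $\hat\theta$ that you are establishing. This step needs to be argued directly from Definition~\ref{def:Brauer}, not from the conclusion.
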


\begin{proof} If $C$ is negligible, then $C\otimes \K\cong C(X)\otimes \K \otimes D \otimes \Cliff{r}$ by the graded version of Brown's
theorem \cite{paperL.G.Brown.Stable.Isom}. Since the group operation on $\hE_D^1(X)$ coincides with the tensor product operation, if $A\in \mathscr{C}_{M_n(D)\otimes \Cliff{2k}}(X)$, then  $[A\otimes C \otimes \K]=[A\otimes \K]+[C \otimes \K]=[A\otimes \K]$ in $\hE_D^1(X)$. This implies that the map $\hat{\theta}: \hat{Br}_D(X)\to H^0(X,\Z/2)\times  \hE^1_D(X)$, $A\mapsto (\bar{k}_A,[A\otimes \K])$, is well-defined.
We describe next the image of this map.

 Consider the following commutative diagram
\[
	\xymatrix{
		Br_D(X)\ar[r]\ar[d]_{\theta} & \hat{Br}_D(X)\ar[d]^{\hat{\theta}_1}&\\
		E^1_D(X)\ar[r]^{i}&\hE^1_D(X)\ar[r]^-{\Phi} & H^1(X,\Z/2)\times E_D^1(X)
	}
\]
where $\hat{\theta}_1[A]_{\hat{Br}}=[A\otimes \K]$.
We have seen in equation~\eqref{eqn:splito} from the proof of Theorem~\ref{thm:Z2} that there is a bijective map
$\hE^1_{D}(X)\to H^1(X, \Z/2)\times E^1_{D}(X)$. We denote this bijection by~$\Phi$ and write its components as
$\Phi=(\delta_0,\varphi)$. We described the homomorphism $\delta_0$ in Theorem~\ref{thm:Z2}.
The set-theoretic map $\varphi:\hE^1_{D}(X) \to E^1_{D}(X)$ is induced by the map $\hat{\mathscr{C}}_{ D \otimes \K  \otimes \Cliff{2k}}(X) \to {\mathscr{C}}_{ D \otimes \K  \otimes M_{2^k}}(X)$ which identifies $\Cliff{2k}$ with $M_{2^k}$ as complex algebras and forgets the grading.
Now if  $A\in \hat{\mathscr{C}}_{M_n(D)\otimes \Cliff{2k}}(X)$, then $(\varphi\circ \hat{\theta}_1)[A]=(i\circ \theta)[\bar{A}]$, where
$\bar{A}\in \mathscr{C}_{M_n(D)\otimes M_{2^k}}(X)$ is $A$ regarded as an ungraded $C^*$-algebra.
This shows  that the image of $\hat{\theta}_1$ is contained in the subgroup $H^1(X,\Z/2)\times_{_{tw}} \Tor{\bar{E}^1_D(X)}$ of $\hat{E}^1_D(X)$ and hence the image of $\hat{\theta}$ is contained in $H^0(X,\Z/2) \times H^1(X,\Z/2)\times_{_{tw}} \Tor{\bar{E}^1_D(X)}.$
Let us show that this is precisely the image of $\hat{\theta}$. This  clearly reduces to identifying the image of $\hat{\theta}_1$.
Let $(x^0,x)\in H^1(X,\Z/2)\times \Tor{\bar{E}^1_D(X)}$. By \cite{DP3} there is some $B\in \mathscr{C}_{M_{n}(D)}(X)$
such that $(i\circ \theta )[B]_{Br}=x$. As explained in the proof of Proposition~\ref{prop:Z2}, there is a real line bundle $L$ on $X$
such that $\delta_0[\Cliff{L}]=x_0$. It follows that $(\Phi\circ \hat{\theta}_1)[B\otimes\Cliff{L}]=(x_0,x)$.
It remains to show that the monoid  $\hat{Br}(X)$ is a group and that $\hat{\theta}$ is injective. As noted in the proof of Theorem~2.15 from \cite{DP3}, if we show that $\hat{\theta}^{-1}(0)=[0]_{\hat{Br}}$, this property will imply not only that $\hat{\theta}$ is injective but also that $\hat{Br}(X)$ is a group. Let $A\in \hat{\mathscr{C}}_{M_n(D)\otimes \Cliff{r}}(X)$ and suppose that $\hat{\theta}[A]_{\hat{Br}}=0$. Thus
$r$ must be even and $[A\otimes \K]=0$ in $\hE^1_D(X)$. It follows that  there is an isomorphism $\Psi: A\otimes \K \to C(X)\otimes D \otimes \Cliff{2}\otimes \K.$ If we set $p=\Psi(1_A \otimes e)$, where $e$ is a rank one projection in $\K$, then $p$ is a full projection since $1_A\otimes e$ is full.
 After conjugating $\Psi$ by a unitary in the multiplier algebra we may arrange that $p\in C(X)\otimes D \otimes \Cliff{2}\otimes M_N$ for some integer $N\geq 1$.
It follows that $A\cong p\large(C(X)\otimes D \otimes \Cliff{2}\otimes M_N \large)p$,  so that $A$ is negligible.
\end{proof}

\bibliographystyle{abbrv}

\smaller[1]

\end{document}